\documentclass[11pt]{amsart}
\usepackage{preamble}

\title{Non-vanishing of Poincar\'e Series on Average}
\date{December 28, 2025}

\author{Ned Carmichael}
\address{N. Carmichael: Department of Mathematics, King’s College London, London, WC2R 2LS, UK.}
\email{\href{mailto:ned.carmichael@kcl.ac.uk}
{ned.carmichael@kcl.ac.uk}}

\author{Noam Kimmel}
\address{N. Kimmel: Raymond and Beverly Sackler School of Mathematical Sciences, Tel Aviv University, Tel Aviv 69978, Israel.}
\email{\href{mailto:noamkimmel@mail.tau.ac.il}{noamkimmel@mail.tau.ac.il}}

\subjclass{11F11, 11L05, 33C10}
\keywords{{P}oincar\'e series, Kloosterman sums, Bessel functions}

\begin{document}

\maketitle

\begin{abstract}
We study when Poincar\'e series for congruence subgroups do not vanish identically.
We show that almost all Poincar\'e series with suitable parameters do not vanish when either the weight $k$ or the index $m$ varies in a dyadic interval.
Crucially, analyzing the problem `on average' over these weights or indices allows us to prove non-vanishing in ranges where the index $m$ is significantly larger than $k^2$ - a range in which proving non-vanishing for individual Poincar\'e series remains out of reach of current methods.
\end{abstract}

\setcounter{tocdepth}{1}
{\tableofcontents}

\section{Introduction}

Let $m$, $q$ and $k>2$ be positive integers, and let \(\chi\) be a character modulo \(q\) satisfying the parity condition \(\chi(-1)=(-1)^k\). 
We consider the Poincar\'e series $P_{k,m,q,\chi}(z)$ of weight $k$, index $m$, and character $\chi$ for the Hecke congruence subgroup
$$
\Gamma_0(q) = \SET{
\begin{pmatrix}
    a       & b  \\
    c       & d 
\end{pmatrix}\in \SL \; , \; q\mid c
},
$$
based at the cusp $i\infty$.
This series is defined by
$$
P_{k,m,q,\chi}(z) = 
\sum_{\gamma \in \Gamma_\infty \backslash \Gamma_0(q)}
\overline{\chi}(\gamma) j(\gamma, z)^{-k} e(m\gamma z),
$$
where 
$$
\Gamma_\infty = \left\lbrace \pm\begin{pmatrix}
    1       & *  \\
    0       & 1 
\end{pmatrix} \in \SL \right\rbrace, \quad \chi\left(
\begin{pmatrix}
    a       & b  \\
    c       & d 
\end{pmatrix}
\right)=\chi(d), \quad
j\left(
\begin{pmatrix}
    a       & b  \\
    c       & d 
\end{pmatrix}
,z \right) = cz+d,
$$
and $e(z) = e^{2\pi i z}$.
In the case \(q=1\) we write \(P_{k,m,1}\) for \(P_{k,m,1,\chi}\), since necessarily \(\chi=\chi_0\) is trivial. 
See Iwaniec \cite[\textsection 3]{MR1474964} for background on Poincar\'e series.

A basic yet unresolved question is to determine when these Poincar\'e series vanish identically - a problem that traces back to Poincar\'e's memoir on Fuchsian groups \cite[p.249]{MR1554584}. 
In the classical setting $q=1$ and $k=12$, Lehmer \cite{MR0021027} famously conjectured that the Poincar\'e series \((P_{12,m,1})_{m\geq1}\) never vanish identically. 
This is equivalent to the assertion that the Ramanujan tau function $\tau(n)$ is nonzero for all positive integers $n$. 
Indeed, the space of weight 12 cusp forms for \(\Gamma_0(1) = \SL \) is one-dimensional, spanned by the modular discriminant \(\Delta\). Consequently, by computing the Petersson inner product \(\langle \Delta, P_{12,m,1}\rangle\) one can show that for \(m\geq1\),
$$
P_{12,m,1}=\frac{\Gamma(11)\tau(m)}{(4\pi m)^{11}\langle\Delta,\Delta\rangle} \Delta.
$$

For general $k$ and $q=1$, partial non-vanishing results were obtained by Rankin \cite{MR0597120}. 
He showed that for sufficiently large even weight $k$, the series $P_{k,m,1}$ is not identically zero provided 
$$
m \leq   k^2\exp\left(-B\frac{\log (k)}{\log (\log (k))}\right) ,
$$
for some absolute constant $B > 0$.
Rankin's results have been extended by Lehner \cite{MR0597127} to general Fuchsian groups (albeit with weaker bounds), and Mozzochi \cite{MR0982000} to $\Gamma_0(q)$ with $q>1$.

An improvement to Rankin's and Mozzochi's results, when $q$ is square-free, was given by the second author in \cite{hrj:15573}. 
There, it is shown that $P_{k,m,q,\chi} \not \equiv 0$ for $\chi=\chi_0$ the trivial character and 
$$
m \ll_\epsilon k^2 \frac{q^2}{p_{\max}(q)^{1/2 + \epsilon}},
$$
where $p_{\max}(q)$ is the maximal prime dividing $q$.

However, in the weight aspect, it seems proving non-vanishing for $m>k^2$ is a difficult problem. 
Indeed, both the methods of Rankin and of the second author encounter obstructions at this point. 
It is therefore natural to ask whether the non-vanishing statements remain true beyond $k^2$.
To address this, we consider the problem of non-vanishing `on average': we wish to show that almost all Poincar\'e series are non-vanishing, in some range of $m>k^2$.

In this paper we consider two different averages - over the index $m$ and over the weight $k$.
We show that for suitable parameter choices, almost all Poincar\'e series $P_{k,m,q,\chi}$ do not vanish identically as $m$ or $k$ vary over a dyadic segment, with the other parameters given.
Our results improve upon previous work of Das and Ganguly \cite{MR2997486}, who studied the same problem of non-vanishing `on average'.

In order to show that almost all the Poincar\'e series $P_{k,m,q,\chi}$ do not vanish identically, we study their $L^2$ norms (with respect to the Petersson inner product on \(\Gamma_0(q)\backslash\mathbb{H}\)).
It is known (see \cite[Corollary 3.4]{MR1474964}) that
\begin{multline}   \label{eq:l2_norm}
\|P_{k,m,q,\chi}(z)\|_2^2
\\=
\int_{\Gamma_0(q)\backslash \mathbb{H}}\left|P_{k,m,q,\chi}(z)\right|^2 y^k\frac{\mathrm d x \mathrm d y}{y^2}
=
\frac{\Gamma(k-1)}{(4\pi m)^{k-1}}\left(1+2\pi i^k \Delta_{k,q,\chi}(m,m)\right),
\end{multline}
where
\begin{equation}\label{def:delta}
\Delta_{k,q,\chi}(m,n)\coloneqq \sum_{c\geq1} \frac{S_\chi(m,n;cq)}{cq}J_{k-1}\left(\frac{4\pi \sqrt{mn}}{cq}\right).
\end{equation}
Here $S_\chi(m,n;cq)$ denotes the twisted Kloosterman sum 
\begin{equation}\label{def:kloosterman}
S_\chi(m,n;cq)=\sum_{\substack{a\bmod {cq}\\ (a,cq)=1}}\chi(a)e\left(\frac{ma+n\overline a}{cq}\right),
\end{equation}
where $\overline a$ is the inverse of $a$ mod $cq$, and $J_{k-1}$ denotes the $J$-Bessel function.

Denote 
$$
\widetilde{P}_{k,m,q,\chi} = \left(\frac{\Gamma(k-1)}{(4\pi m)^{k-1}}\right)^{-1/2}P_{k,m,q,\chi}.
$$
Then by \eqref{eq:l2_norm} we have
\begin{equation}\label{eq:l2_norm_normalized}
\|\widetilde{P}_{k,m,q,\chi} \|_2^2 = 1 + 2\pi i^k \Delta_{k,q,\chi}(m,m).
\end{equation}
Our two main theorems give an average asymptotic estimate for the $L^2$ norms of these renormalized Poincar\'e series.

\begin{theorem}\label{thm:k-avg}
Let $m$ and $q$ be positive integers, let $K\geq1$ and suppose $m\leq K^{100}$. 
Let $\delta\in\SET{0,1}$ and let $\chi$ be a character modulo $q$ of conductor $q'\mid q$ satisfying $\chi(-1) = (-1)^\delta$.
Let \(g\) be the multiplicative function defined on prime powers by 
\begin{equation*}
g(p^a)
=
\begin{cases}
1 & \text{ if } a=1, \\
p^{a/4} & \text{ if } a\geq2 \text{ is even}, \\
p^{(a+1)/4} & \text{ if } a\geq3 \text{ is odd}.
\end{cases}
\end{equation*}
Let \(\epsilon>0\) be sufficiently small, and suppose $m$ satisfies 
$$
m\cdot(m,q) \leq  K^{3-200\epsilon}q^{2}/g(q')^2.
$$
Then
\begin{multline*}
\#
\left\{K < k < 2K , \; k\equiv\delta \bmod{2}: \; \left|\|\widetilde{P}_{k,m,q,\chi}(z)\|_2^2 - 1\right|\leq K^{-\epsilon}\right\}
\\= \frac{K}{2}
\left(1- 
O_{\epsilon}\left(K^{-\epsilon}\right)
\right).
\end{multline*}
In particular, 
\begin{equation*}
\#\{K < k < 2K , \; k\equiv\delta \bmod{2}: P_{k,m,q,\chi} \not\equiv 0\}
=
\frac{K}{2}
\left(1-
O_\epsilon\left(K^{-\epsilon}\right)
\right) .
\end{equation*}
\end{theorem}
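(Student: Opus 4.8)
The plan is to turn the theorem into a second‑moment estimate for the Kloosterman‑sum series $\Delta_{k,q,\chi}(m,m)$. Since $\|\widetilde P_{k,m,q,\chi}\|_2^2$ is a squared $L^2$‑norm, hence real, \eqref{eq:l2_norm_normalized} gives $\bigl|\|\widetilde P_{k,m,q,\chi}\|_2^2-1\bigr|=2\pi\,|\Delta_{k,q,\chi}(m,m)|$, so by Chebyshev's inequality the number of $k$ in our range with $\bigl|\|\widetilde P_{k,m,q,\chi}\|_2^2-1\bigr|>K^{-\epsilon}$ is at most $(2\pi)^2K^{2\epsilon}\sum_{K<k<2K,\,k\equiv\delta\,(2)}|\Delta_{k,q,\chi}(m,m)|^2$. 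Thus it suffices to establish
\[
\sum_{\substack{K<k<2K\\ k\equiv\delta\bmod 2}}|\Delta_{k,q,\chi}(m,m)|^2\ \ll_\epsilon\ K^{1-3\epsilon}
\]
in the stated ranges; the first assertion follows, and the ``in particular'' clause is immediate because $P_{k,m,q,\chi}\equiv 0$ forces $\|\widetilde P_{k,m,q,\chi}\|_2^2=0$, i.e. $2\pi|\Delta_{k,q,\chi}(m,m)|=1>K^{-\epsilon}$ once $K$ is large.

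To prove the displayed bound I would insert \eqref{def:delta} and open the square, reducing the problem to the double sum over $c_1,c_2\ge 1$ of $\frac{S_\chi(m,m;c_1q)\,\overline{S_\chi(m,m;c_2q)}}{c_1c_2q^2}$ times the Bessel sum $\sum_{K<k<2K,\,k\equiv\delta\,(2)}J_{k-1}\!\bigl(\tfrac{4\pi m}{c_1q}\bigr)J_{k-1}\!\bigl(\tfrac{4\pi m}{c_2q}\bigr)$. Because $J_{k-1}(x)$ is exponentially small in $k$ when $x\le(1-\eta)(k-1)$, only $c_i\ll m/(Kq)$ contribute, up to a negligible error; in particular if $m\ll Kq$ there is nothing to prove, since then $\Delta_{k,q,\chi}(m,m)$ is itself negligible for every $k$. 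For the surviving range I would use Debye/uniform (Airy‑type) asymptotics for the Bessel functions and carry out the sum over $k$: with $x_i=4\pi m/(c_iq)$, the product $J_{k-1}(x_1)J_{k-1}(x_2)$ decomposes into a part essentially independent of $k$ and parts oscillating in $k$, and summing over $K<k<2K$ with the fixed parity $k\equiv\delta\pmod 2$ produces a main term of size $\asymp\frac{K}{\sqrt{x_1x_2}}$ times a bounded trigonometric function of $x_1-x_2$ and of $x_1+x_2$ (the fixed parity of $k$ is exactly what prevents the $x_1+x_2$ piece from being averaged away), plus lower‑order terms.

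The diagonal $c_1=c_2$ contributes $\asymp\frac{K}{mq}\sum_{c\ll m/(Kq)}|S_\chi(m,m;cq)|^2/c$, which by Weil's bound together with $(m,cq)\le(m,q)(m,c)$ and $\sum_{c\le C}(m,c)\ll_\epsilon Cm^\epsilon$ is $\ll_\epsilon\frac{(m,q)}{q}m^\epsilon\ll K^{O(\epsilon)}$, using $m\le K^{100}$ and $(m,q)\le q$; this is acceptable for $\epsilon$ small. The crux is the off‑diagonal. After inserting the Bessel main term, the additive phases $e(\pm x_i/(2\pi))=e(\pm 2m/(c_iq))$ separate the variables $c_1$ and $c_2$, so the off‑diagonal (indeed the whole main term) is bounded by $\frac{K}{mq}\bigl(|T_+|^2+|T_-|^2\bigr)$ plus acceptable errors, where $T_\pm=\sum_{c\ll m/(Kq)}\tfrac{S_\chi(m,m;cq)}{\sqrt c}\,e\!\bigl(\pm\tfrac{2m}{cq}\bigr)$. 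The trivial (Weil) bound for $T_\pm$ only yields the range $m(m,q)\ll K^2q^2$; to reach $m(m,q)\ll K^{3}q^2/g(q')^2$ one must save a further factor of roughly $\sqrt K/g(q')$ in $T_\pm$. I would do this by first using twisted multiplicativity to split off the part of the modulus $q$ coprime to the conductor $q'$, where $S_\chi$ is an ordinary Kloosterman sum controlled by Weil; for the part supported on primes $p\mid q'$ one evaluates $S_\chi(m,m;p^a c)$ explicitly by stationary phase, and the resulting size — $\ll g(q')$ times the Weil prediction, with $g(q')=1$ exactly when $q'$ is squarefree — is precisely how the arithmetic function $g$ enters. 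For the remaining, essentially untwisted, $c$‑sum one opens the Kloosterman sums and estimates the resulting exponential sums in $c$ by Weil plus a van der Corput / Poisson argument; equivalently, one applies the Kuznetsov formula to a suitably weighted version of this $c$‑sum, whose diagonal term is $\ll K^{-2}$ and whose spectral side supplies the cancellation. Putting this together gives the off‑diagonal $\ll_\epsilon\frac{m(m,q)g(q')^2}{K^2q^2}K^{C\epsilon}$ for an absolute constant $C<197$, and the hypothesis $m(m,q)\le K^{3-200\epsilon}q^2/g(q')^2$ makes this $\ll K^{1-3\epsilon}$ (the large slack $200\epsilon$ absorbs the accumulated divisor‑function and $m^\epsilon$ factors, noting that $q\ll m/K\le K^{99}$ in the non‑trivial range). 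Combining the diagonal and off‑diagonal bounds yields the second‑moment estimate.

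The main obstacle is this off‑diagonal analysis, and within it the non‑squarefree part of the conductor: bounding $S_\chi(m,m;p^a c)$ for $a\ge 2$ with loss no worse than $g(q')$, and then still extracting the extra $\sqrt K$ of cancellation from the twisted $c$‑sum. This is exactly what lets the index $m$ run past $k^2$ — Cauchy--Schwarz combined with term‑by‑term Weil bounds gets stuck precisely at $m\asymp k^2$.
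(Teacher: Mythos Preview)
Your Chebyshev reduction to a second-moment bound for $\sum_{k\equiv\delta\,(2),\,K<k<2K}|\Delta_{k,q,\chi}(m,m)|^2$ is exactly what the paper does in Section~3. The divergence is entirely in how that second moment is bounded.

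The paper does \emph{not} use Debye/Airy asymptotics, does not separate $c_1$ and $c_2$, and does not invoke Kuznetsov or any spectral input. After truncating to $c_1,c_2\le 200m/(Kq)$, it evaluates the $k$-sum \emph{exactly} via Neumann's addition theorem,
\[
\sum_{k\equiv\delta\,(2)} u\!\left(\tfrac{k-1}{K}\right)J_{k-1}(x_1)J_{k-1}(x_2)
=\frac{K}{2}\int_{\mathbb R}\widehat u(Kt)\,J_0\!\Bigl(\sqrt{x_1^2+x_2^2\mp 2x_1x_2\cos 2\pi t}\Bigr)\,\mathrm dt \pm\cdots,
\]
and then Taylor-expands $\cos 2\pi t$ about $t=0$. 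Because $\widehat u(Kt)$ forces $|t|\ll K^{-1}$, this expansion shows the integral is $O_A(K^{-A})$ whenever $|c_1-c_2|\ge m/(K^{2-\epsilon}q)$, and $O(K^{-1})$ unconditionally. Hence only a near-diagonal band of width $m/(K^{2-\epsilon}q)$ survives, and on that band the trivial integral bound together with the twisted Weil bound already gives $\sigma_K^2\ll_\epsilon m^{1+\epsilon}(m,q)g(q')^2/(K^3q^2)$. The extra factor of $K$ you are chasing comes entirely from this localization $|c_1-c_2|<m/(K^2q)$, with no cancellation in any $c$-sum required.

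Your route---factor the Bessel main term as $e^{\pm ix_1}e^{\mp ix_2}$, pass to $|T_\pm|^2$, and then win $\sqrt K$ in each $T_\pm$ by Kuznetsov or van der Corput after opening the Kloosterman sums---discards precisely this localization, and the sentence ``the spectral side supplies the cancellation'' is where the actual work would lie. You would have to specify the test function (your implicit weight $\psi(x)\asymp x^{-1/2}e^{\pm ix}$ on $x\in[K,4\pi m/q]$ is neither compactly supported nor small near the transition $x\asymp k$), control its Bessel transforms uniformly, and propagate the level-$q$ and $g(q')$ dependence through a spectral large sieve. That may be feasible, but it is substantially harder than the paper's entirely elementary argument, and as written it is the genuine gap in your proposal. (Minor: your diagonal estimate drops the factor $g(q')^2$ from the twisted Weil bound; the correct bound reads $\ll g(q')^2(m,q)m^\epsilon/q$.)
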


\begin{remark}
In \cite[Theorem 4.2]{MR2997486}, it was shown that for $m\ll K^{5/2}q$ with $ (m,q) = 1$, a proportion of at least $\frac{1}{2\divis(m)}$ of the Poincar\'e series $P_{k,m,q,\chi_0}$ with $K<k<2K$ and $2\mid k$ do not vanish (where $\sigma_0$ is the divisor function).
\Cref{thm:k-avg} improves this result in several ways.

Firstly, the range of admissible indices $m$ is improved in the $K$ aspect from $K^{5/2}$ to $K^{3 - \epsilon}$.
In the setting of \cite[Theorem 4.2]{MR2997486} (when \(\chi=\chi_0\) is the trivial character and $(m,q)=1$) the $q$ aspect is also improved from $q$ to $q^{2}$.
Secondly, we show that almost all Poincar\'e series do not vanish in the relevant range, as opposed to a proportion of $\frac{1}{2\divis(m)}$, which can tend to 0 for various choices of $m$.
Lastly, our result gives further information on the $L^2$ norms of the Poincar\'e series beyond the non-vanishing.
\end{remark}

\begin{theorem}\label{thm:m-avg}
Let \(k\) and \(q\) be positive integers with \(q\leq k^{100}\), and let \(M\geq1\). 
Let \(\chi\) be a character modulo \(q\) satisfying the parity condition \(\chi(-1)=(-1)^k\). 
Let \(\epsilon>0\) be sufficiently small, and suppose \(M\leq k^{5/2-500\epsilon}q^{2}/f(q)\), where \(f(q)\) denotes the largest square divisor of \(q\). 
Then 
\begin{equation*}
\#\SET{M<m<2M:\left|\|\widetilde{P}_{k,m,q,\chi}(z)\|_2^2-1\right|\leq k^{-\epsilon}}
=M(1-O_{\epsilon}(M^{-\epsilon})).
\end{equation*}
In particular, 
\begin{equation*}
\#\SET{M<m<2M: P_{k,m,q,\chi} \not\equiv 0}
=M(1- O_\epsilon(M^{-\epsilon})).
\end{equation*}
\end{theorem}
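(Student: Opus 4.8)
The plan is to bound the second moment of $\Delta_{k,q,\chi}(m,m)$ over the window $(M,2M)$ and conclude by Chebyshev's inequality; the companion Theorem~\ref{thm:k-avg} is entirely parallel, with the oscillation of $J_{k-1}$ in the index replaced by that in the weight. We may assume $k$ is sufficiently large. In the trivial regime $M\le kq/100$ every Bessel argument $4\pi m/(cq)\le 8\pi M/q$ lies below $k/2$, where $J_{k-1}(x)\ll_{A}k^{-A}$, so Weil's bound $S_\chi(m,n;cq)\ll(cq)^{1/2+\epsilon}(m,n,cq)^{1/2}$ applied to \eqref{def:delta} gives $|\Delta_{k,q,\chi}(m,m)|\ll k^{-100}$ for \emph{every} $m$ in the window, and the conclusion holds with empty exceptional set. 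So assume $M>kq/100$; then $k\ll M$, and since $M\le k^{5/2}q^{2}\le k^{203}$, every factor $(c_1c_2q)^{\epsilon}$ or $k^{O(\epsilon)}$ occurring below is $\ll M^{o(1)}$, absorbed by the $500\epsilon$ in the hypothesis. By \eqref{eq:l2_norm_normalized} we have $\|\widetilde{P}_{k,m,q,\chi}\|_2^{2}-1=2\pi i^{k}\Delta_{k,q,\chi}(m,m)$, so by Chebyshev's inequality it suffices to prove
\begin{equation*}
\sum_{M<m<2M}\bigl|\Delta_{k,q,\chi}(m,m)\bigr|^{2}\ \ll_{\epsilon}\ M^{1-10\epsilon};
\end{equation*}
this bounds the number of $m\in(M,2M)$ with $|\Delta_{k,q,\chi}(m,m)|>\tfrac1{2\pi}k^{-\epsilon}$ by $\ll k^{2\epsilon}M^{1-10\epsilon}\ll M^{1-\epsilon}$, giving both conclusions --- the non-vanishing because $\bigl|\,\|\widetilde{P}_{k,m,q,\chi}\|_2^{2}-1\bigr|<1$ forces $P_{k,m,q,\chi}\not\equiv0$. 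Set $C:=\lceil 30M/(qk)\rceil\ge1$. Using the decay of $J_{k-1}(x)$ for $x<(1-c_0)k$ (an absolute $c_0>0$) together with Weil's bound in the tail, we may replace the $c$-sum in \eqref{def:delta} by its truncation to $1\le c\le C$, at the cost of an error contributing $\ll k^{-50}$ to the second moment.

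Squaring out and smoothing the $m$-sum by a fixed bump function $\psi$ supported on $(1/2,4)$ with $\psi\equiv1$ on $[1,2]$, the task reduces to bounding
\begin{equation*}
\sum_{c_1,c_2\le C}\frac{1}{c_1c_2q^{2}}\,\sum_{m}S_\chi(m,m;c_1q)\,\overline{S_\chi(m,m;c_2q)}\,J_{k-1}\!\Bigl(\tfrac{4\pi m}{c_1q}\Bigr)J_{k-1}\!\Bigl(\tfrac{4\pi m}{c_2q}\Bigr)\psi\!\Bigl(\tfrac{m}{M}\Bigr).
\end{equation*}
The diagonal $c_1=c_2=c$ is the expected main term: one uses that the average over $m$ of $|S_\chi(m,m;cq)|^{2}$ is $\ll(cq)^{1+\epsilon}$ --- expand the square and detect $a+\overline a\equiv b+\overline b\pmod{cq}$ (the smooth cutoff making the detection lossless since $cq\le Cq\ll M^{1-\eta}$), bounding $\#\{a\bmod{cq}: a+\overline a\equiv t\}$ by $(cq)^{\epsilon}$ on average over $t$ --- together with $|J_{k-1}(4\pi m/(cq))|^{2}\ll cq/M$ for $c\le C$, and the Airy bound $|J_{k-1}|^2\ll k^{-2/3}$ in the transition band $4\pi m/(cq)\asymp k$. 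The resulting sum over $c\le C$ is $\ll(M/(qk))^{1+\epsilon}\ll M^{1-10\epsilon}$ in the stated range, so everything reduces to the off-diagonal $c_1\ne c_2$.

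For the off-diagonal I would open each Kloosterman sum into additive characters modulo $c_1q$ and $c_2q$ and apply Poisson summation in $m$: with $G(m)=J_{k-1}(4\pi m/(c_1q))J_{k-1}(4\pi m/(c_2q))\psi(m/M)$, the inner $m$-sum becomes $\sum_{h\in\mathbb Z}\widehat G\bigl(h-\tfrac{a+\overline a}{c_1q}+\tfrac{b+\overline b}{c_2q}\bigr)$. The heart of the matter is the analysis of $\widehat G$ via the uniform (Debye, and Airy near the transition) asymptotics of $J_{k-1}$. For $c_1,c_2$ bounded away from $C$, the function $G$ is, up to a negligible error, a combination of four oscillatory terms of amplitude $\ll(c_1q/M)^{1/2}(c_2q/M)^{1/2}$ and phases $\pm\Phi_{c_1}(m)\pm\Phi_{c_2}(m)$, where $2\pi\Phi_{c}(m)=\sqrt{(4\pi m/cq)^{2}-k^{2}}-k\arccos\!\bigl(kcq/(4\pi m)\bigr)$, so that $\Phi_{c}'(m)\asymp1/(cq)$ while --- crucially --- $\Phi_{c}''(m)\asymp k^{2}cq/M^{3}$ does not vanish. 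Stationary phase then shows that $\widehat G(\xi)$ is negligible unless $\xi$ lies in one of four intervals of length $\asymp|\Phi_{c_1}''\pm\Phi_{c_2}''|\,M$ swept out by $\pm\Phi_{c_1}'(m)\pm\Phi_{c_2}'(m)$, and there $|\widehat G(\xi)|\ll(c_1q/M)^{1/2}(c_2q/M)^{1/2}\min\!\bigl(M,\,|\Phi_{c_1}''\pm\Phi_{c_2}''|^{-1/2}\bigr)$; near $c=C$ one uses instead the Airy bound $J_{k-1}\ll k^{-1/3}$. Clearing denominators, a term $(a,b,h,c_1,c_2)$ survives only when $c_2(a+\overline a)-c_1(b+\overline b)-hc_1c_2q$ lies within $\delta_0:=c_1c_2q\cdot|\Phi_{c_1}''\pm\Phi_{c_2}''|\,M\cdot k^{o(1)}$ of one of four integers built from $\pm2c_1\pm2c_2$, with $h$ confined to $O(1)$ values.

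It then remains, for each $c_1\ne c_2\le C$, to count the pairs $(a,b)$ of units modulo $c_1q$ and $c_2q$ with $c_2(a+\overline a)-c_1(b+\overline b)$ in a prescribed interval of length $\delta_0$. Reducing modulo $c_1/\gcd(c_1,c_2)$ pins $a+\overline a$ to a single residue class --- cutting the admissible $a$ by a factor $\gcd(c_1,c_2)/c_1$ --- after which $b+\overline b$ is confined to $O\bigl(1+\delta_0/\gcd(c_1,c_2)\bigr)$ integers, each attained $\ll(c_2q)^{\epsilon}$ times, giving a count $\ll\bigl(\gcd(c_1,c_2)+\delta_0\bigr)\,q\,(c_1c_2q)^{\epsilon}$, in which the $q$-aspect degrades for $q$ divisible by large squares, producing the factor $f(q)$. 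Inserting this count and the Bessel/stationary-phase weights and carrying out the double sum over $c_1,c_2\le C$ --- writing $c_i=dc_i'$ with $d=\gcd(c_1,c_2)$ and summing the resulting power series in $c_i'$ and $d$ --- yields an off-diagonal contribution of size $\ll M^{2+o(1)}f(q)/(q^{2}k^{5/2})$, which is $\ll M^{1-10\epsilon}$ exactly in the range $M\le k^{5/2-500\epsilon}q^{2}/f(q)$, completing the proof. The main difficulty is this last stretch: making the stationary-phase treatment of the Bessel transform uniform across the whole range $1\le c\le C$ --- in particular patching in the Airy regime where $4\pi m/(cq)\asymp k$ --- and then organising the $\gcd(c_1,c_2)$-weighted double sum over $c_1,c_2\asymp M/(qk)$ so that it stays below $M^{1-10\epsilon}$, which is precisely what forces the threshold $M\ll k^{5/2}q^{2}/f(q)$.
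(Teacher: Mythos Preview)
Your overall reduction --- bound the smoothed second moment of $\Delta_{k,q,\chi}(m,m)$ and apply Chebyshev --- matches the paper exactly, and your final off-diagonal target $M^{2+o(1)}f(q)/(q^{2}k^{5/2})$ is precisely the term that forces the threshold $M\ll k^{5/2}q^{2}/f(q)$. The execution, however, is genuinely different. You split the $c$-sum into diagonal and off-diagonal pieces, treat the diagonal via the second moment of Kloosterman sums together with pointwise Bessel bounds, and attack the off-diagonal by Poisson in $m$ followed by a uniform stationary-phase analysis of the transform of $J_{k-1}(4\pi t/(qc_1))J_{k-1}(4\pi t/(qc_2))$. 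The paper instead applies Poisson uniformly over all $c_1,c_2$ (no diagonal/off-diagonal split), obtaining a dual sum $\sum_n \mathcal N_\chi(n)\mathcal I_{M,k}(n)$; it then bounds $\mathcal N_\chi$ pointwise via the count $v(x;\ell)=\#\{a:a+\overline a\equiv x\bmod\ell\}\ll \sigma_0(\ell)f(\ell)^{1/2}$, applies Cauchy--Schwarz to the $n$-sum, and evaluates $\sum_n|\mathcal I_{M,k}(n)|^2$ as a double integral in which the $n$-sum becomes an exponential sum bounded by $\min(N,\|y\|^{-1})$. The remaining $\int|J_{k-1}|^4$ is handled by the elementary three-range splitting (pre-transition, Airy, oscillatory), entirely avoiding stationary phase. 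Your approach is more direct but carries exactly the technical burden you flag at the end --- making Debye/Airy asymptotics uniform across $1\le c\le C$ and tracking the degenerate phase $\Phi_{c_1}''-\Phi_{c_2}''$ when $c_1\approx c_2$ --- whereas the paper's Cauchy--Schwarz device sidesteps all of this at the cost of an extra term $M^{1/2+\epsilon}f(q)^{1/2}/(k^{3/2}q)$ in $\sigma_M^2$, which is harmless in the stated range.

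One point in your sketch needs correction: you write that each value of $b+\overline b$ is attained $\ll(c_2q)^{\epsilon}$ times, but this is false in general --- the congruence $b^2-yb+1\equiv0\bmod{c_2q}$ can have up to $\asymp f(c_2q)^{1/2}$ solutions when $c_2q$ is squarefull, and this is exactly where $f(q)$ enters (as you then say, somewhat inconsistently, in the next clause). The paper makes this explicit in its Lemma bounding $\mathcal N_\chi$; your counting argument needs the same input, so the bound $(\gcd(c_1,c_2)+\delta_0)\,q\,(c_1c_2q)^\epsilon$ should be replaced by one carrying $f(qc_1)^{1/2}f(qc_2)^{1/2}$, after which the double sum over $c_1,c_2$ must be re-examined (the paper's Lemma on $S_2(X;q)$ shows how the $f(qc_i)^{1/2}$ and $\gcd(c_1,c_2)^{1/2}$ weights interact). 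With that fix your route should go through, but as written the counting step is the genuine gap.
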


\begin{remark}
Previously, in \cite[Theorem 4.1]{MR2997486} non-vanishing had been established for $P_{k,m,q,\chi_0}$ for a positive proportion of arbitrarily large $m$ in short intervals, provided that $k$ and $q$ were both fixed and bounded below by some absolute constant.
\Cref{thm:m-avg} is somewhat different. 
It has the advantage of addressing almost all forms, as opposed to a positive proportion, and can handle large $k$ and potentially large $q$, but restricts the index $m$ in terms of these parameters.
Our result also gives additional information on the \(L^2\) norms.
\end{remark}

The key idea behind the proofs of \Cref{thm:k-avg} and \Cref{thm:m-avg} is to bound the (smoothed) second moment of the squared $L^2$ norms of the Poincar\'e series.
That is, we study 
$(\|\widetilde{P}_{k,m,q,\chi}\|_2^2 - 1)^2$
on average. 
We average over $k\sim K$ for \Cref{thm:k-avg} and over $m\sim M$ for \Cref{thm:m-avg}.
Under the conditions of these theorems, we show these second moments are small, and deduce from this that $\|\widetilde{P}_{k,m,q,\chi}\|_2^2$ is almost always close to $1$.
\subsection*{Acknowledgments}
We thank Stephen Lester and Zeev Rudnick for helpful discussions, comments and corrections.

N.C. was supported by the Additional Funding Programme for Mathematical Sciences, delivered by EPSRC (EP/V521917/1) and the Heilbronn Institute for Mathematical Research. N.K. was supported by the European Research Council (ERC) under the European Union's  Horizon 2020 research and innovation program (Grant agreement No. 786758) and by the ISRAEL SCIENCE FOUNDATION (grant No. 2860/24).

\section{Preliminaries}

\subsection{Notation} 

We use the standard notation \(e(z)=e^{2\pi iz}\).
We denote by \(\overline a \) the inverse of \(a\) modulo \(c\), i.e. \(a\overline a\equiv 1\bmod c\), and we denote \(a^\dagger\equiv a+\overline a \bmod c\). 
We also use the notation \(\mathbf1\{A\}\) to denote the indicator function of \(A\). 
We write \((a,b)\) for the greatest common divisor of two integers \(a\) and \(b\).
In sums (and products), the subscript \(\sum_{d\mid n}\) will always denote a sum (or product) taken over the positive integers \(d\) dividing \(n\).
The subscript \(\sum_{m\leq x}\) will always denote a sum (or product) taken over positive integers \(1\leq m\leq x\).
We denote by $\divis$ the divisor function $\divis(n) = \sum_{d\mid n} 1$.

Throughout, \(g(n)\) denotes the multiplicative function defined in \Cref{thm:k-avg}. 
That is, \(g\) is the multiplicative function taking the following values at prime powers:
\begin{equation*}
g(p^a)
\coloneqq
\begin{cases}
1 & \text{ if } a=1, \\
p^{a/4} & \text{ if } a\geq2 \text{ is even}, \\
p^{(a+1)/4} & \text{ if } a\geq3 \text{ is odd}.
\end{cases}
\end{equation*}
Furthermore, as in \Cref{thm:m-avg}, \(f(n)\) always denotes the largest square divisor of \(n\). 
In other words, \(f\) is the multiplicative function defined at prime powers by
\begin{equation*}
f(p^a)\coloneqq \begin{cases} p^a &\text{ if } a \text{ is even},\\ p^{a-1} &\text{ if } a \text{ is odd}.\end{cases}
\end{equation*}

We will use the strict $O$ notations, so that $F(x) = \BigO{G(x)}$ if one has $|F(x)| \leq C |G(x)|$ for some constant $C>0$ and for all valid inputs $x$.
We will also write $F(x) \ll G(x)$ to indicate $F(x) = \BigO{G(x)}$.
If the implied constants depend on some parameters $p_1,p_2,...$, this will either be explicitly stated, or indicated by subscripts such as $O_{p_1,_2,...}$ or $\ll_{p_1,p_2,...}$.

\subsection{Smooth functions}\label{sec:smooth-func}
Throughout, we let \(u\) be a smooth test function satisfying the following:
\begin{equation}\label{def:smooth-u}
\begin{gathered}
\mathrm{supp}(u) 
\subset [1/2, 3],\\
0 \leq u(x) \leq 1, \\
u(x) = 1 \; \mathrm{for} \; x\in [1,2].
\end{gathered}
\end{equation}

We will denote by $\widehat{u}$ the Fourier transform of $u$:
$$
\widehat{u}(t) = \int_\RR u(x) e(-xt)\mathrm dx.
$$
By properties of the Fourier transform it is known that
$$
u^{(j)}(x) = (2\pi i)^j \int_\RR \widehat{u}(t) t^j e(xt)\mathrm dt.
$$
In particular, taking $x=0$ implies that for all $j\geq 0$:
\begin{equation}\label{eq:fourier-derivative}
\int_\RR \widehat{u}(t) t^j \mathrm dt  
=
(2\pi i)^{-j}u^{(j)}(0) = 0.
\end{equation}

Since $u$ is fixed, we have that
$$
u^{(j)} \ll_j 1 \text{ for all } j\geq 0.
$$
By partial integration this implies
\begin{equation}\label{eq:u-decay}
\widehat{u}(t) \ll_j (1 + |t|)^{-j} \text{ for all } j\geq 0.
\end{equation}
This bound and a change of variable yields
\begin{equation}\label{eq:fourier-bound}
K\int_\RR    \left|\widehat{u}(Kt) t^j \right| \mathrm dt
\ll_j K^{-j}
\end{equation}
for all $j\geq 0$ and $K>0$.

\subsection{Bessel functions}
We will require various bounds and properties of the \(J\)-Bessel function, which we collect in this section.
The Bessel functions appear throughout (due to their presence in the expression for $\left\|P_{k,m,q,\chi}\right\|_2^2$), and are defined (see \cite[p.40]{watson42}) for \(\nu\geq0\) and \(z\geq0\) by
\begin{equation}\label{def:J-bessel-Taylor}
J_\nu(z)=\sum_{\ell\geq 0}\frac{(-1)^\ell}{\ell!\Gamma(\nu+1+\ell)}\Big(\frac{z}{2}\Big)^{\nu+2\ell}.
\end{equation}
For all integers \(n\) and all real \(z\), we also have the integral representation \cite[p.20]{watson42}:
\begin{equation}\label{eq:bessel-intrep}
J_n(z)=\frac{1}{\pi}\int_0^\pi \cos(n\theta-z\sin\theta)\mathrm d\theta, \text{ which implies } |J_n(z)|\leq 1 \text{ for all real } z.
\end{equation}

\begin{lemma}[Bessel Function Bounds]\label{lem:bessel-bounds}

Let \(\nu\geq 2\). 
We have the following bounds.

\begin{enumerate}[label=(\roman*)] 

\item \label{besi} If \(0\leq z\leq (\nu+1)/4\), then
\begin{equation*}
J_\nu(z)\ll z^2\exp\Big(-\frac{14\nu}{13}\Big).
\end{equation*}

\item \label{besii} If \(0\leq z\leq (\nu+1)-(\nu+1)^{1/3+\delta}\) for some \(0<\delta\leq 2/3\), then
\begin{equation*}
J_\nu(z)\ll \exp(-\nu^\delta).
\end{equation*}

\item \label{besiii} Uniformly for \(z\geq0\), we have
\begin{equation*}
J_\nu(z)\ll \nu^{-1/3}.
\end{equation*}

\item \label{besiv} If \(z\geq \nu+\nu^{1/3+\delta}\) for some \(\delta>0\), then
\begin{equation*}
J_\nu(z)\ll_\delta (z^2-\nu^2)^{-1/4}.
\end{equation*}
\end{enumerate}

\end{lemma}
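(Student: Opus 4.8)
The plan is to bound $J_\nu(z)$ according to the position of $z$ relative to the transition point $z=\nu$, using three standard inputs. (a)~The Taylor expansion \eqref{def:J-bessel-Taylor}: for $0\le z\le 2\sqrt{\nu+1}$ it is an alternating series with non-increasing terms, so $0\le J_\nu(z)\le (z/2)^\nu/\Gamma(\nu+1)$, which by Stirling's bound for $\Gamma$ is extremely small. (b)~Debye's asymptotic expansions for $z$ bounded away from $\nu$ (see Watson, Ch.~VIII): with $\operatorname{sech}\alpha=z/\nu<1$, respectively $\sec\beta=z/\nu>1$,
\begin{equation*}
J_\nu(\nu\operatorname{sech}\alpha)=\frac{e^{-\nu\phi(\operatorname{sech}\alpha)}}{\sqrt{2\pi\nu\tanh\alpha}}\Bigl(1+O\bigl(\tfrac{1}{\nu\tanh^3\alpha}\bigr)\Bigr),\qquad
J_\nu(\nu\sec\beta)=\sqrt{\tfrac{2}{\pi\nu\tan\beta}}\Bigl(\cos\bigl(\nu(\tan\beta-\beta)-\tfrac\pi4\bigr)+O\bigl(\tfrac{1}{\nu\tan^3\beta}\bigr)\Bigr),
\end{equation*}
where $\phi(x):=\operatorname{arccosh}(1/x)-\sqrt{1-x^2}$; here the $O$-terms are genuinely negligible precisely when their arguments are $\ll1$ (for $\beta$ near $0$ one may instead invoke the uniform Airy-type expansion). (c)~The classical uniform bound $|J_\nu(z)|\ll\nu^{-1/3}$, which is part~(iii) itself: I would either quote it (Watson, Ch.~VIII; Olver) or re-derive it from \eqref{eq:bessel-intrep} by standard van der Corput / stationary-phase estimates applied separately on $z\le\nu-\nu^{1/3}$ (non-stationary phase and one integration by parts: the total variation of $\theta\mapsto(\nu-z\cos\theta)^{-1}$ is $2z/(\nu^2-z^2)\ll\nu^{-1/3}$), on $z\ge\nu+\nu^{1/3}$ (at the stationary point $\cos\theta_0=\nu/z$ the phase has second derivative $z\sin\theta_0=\sqrt{z^2-\nu^2}\ge\nu^{2/3}$), and on $|z-\nu|\le\nu^{1/3}$ (the phase has third derivative $\asymp\nu$ near $\theta=0$). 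I will also use that $\phi$ is positive and strictly decreasing on $(0,1)$ with $\phi(x)\ge c_0(1-x)^{3/2}$ for an absolute $c_0>0$ (indeed $\phi(x)\sim\tfrac{2\sqrt2}{3}(1-x)^{3/2}$ as $x\to1^-$, an elementary expansion).

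For part~(i) I would split $[0,(\nu+1)/4]$ at $z=2\sqrt{\nu+1}$. For $z\le 2\sqrt{\nu+1}$, input~(a) together with $\nu\ge2$ gives $J_\nu(z)\le\frac{z^2}{4}\cdot\frac{(\nu+1)^{(\nu-2)/2}}{\Gamma(\nu+1)}$, and Stirling makes the second factor $\ll e^{-14\nu/13}$ with enormous room (the true decay being $e^{-(\nu/2)\log\nu+O(\nu)}$, and bounded $\nu$ being absorbed into the constant), so $J_\nu(z)\ll z^2e^{-14\nu/13}$. For $2\sqrt{\nu+1}<z\le(\nu+1)/4$ — which forces $\nu\ge64$, hence $z^2>256$ and $z/\nu\le\frac{\nu+1}{4\nu}\le\frac{65}{256}$, whence $\tanh\alpha=\sqrt{1-(z/\nu)^2}\ge0.96$ — the first Debye expansion gives $J_\nu(z)\ll e^{-\nu\phi(z/\nu)}$, the prefactor and relative error being $\le1$ and $O(1/\nu)$; since $\phi$ is decreasing, $\phi(z/\nu)\ge\phi(65/256)=\operatorname{arccosh}(256/65)-\sqrt{1-(65/256)^2}=1.0802\ldots>1.0769\ldots=14/13$ (as $\nu\to\infty$ this threshold tends to $\operatorname{arccosh}(4)-\sqrt{15}/4=1.0951\ldots$), and $J_\nu(z)\ll e^{-14\nu/13}\le z^2e^{-14\nu/13}$ follows.

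Part~(ii) uses the same split: for $z\le 2\sqrt{\nu+1}$ the Taylor/Stirling bound is $\ll e^{-\nu^\delta}$ with room to spare (uniformly in $\delta\le2/3$, bounded $\nu$ trivial), while for larger $z$ the hypothesis $z\le(\nu+1)-(\nu+1)^{1/3+\delta}$ forces $1-z/\nu\gg\nu^{-2/3+\delta}$, hence $\nu\phi(z/\nu)\ge c_0\nu(1-z/\nu)^{3/2}\gg\nu^{3\delta/2}$; here $\nu\tanh^3\alpha\gg\nu^{3\delta/2}\gg1$, so the Debye error and prefactor are harmless and $J_\nu(z)\ll e^{-c\nu^{3\delta/2}}$, which an elementary case split (on whether $\nu^{\delta/2}$ exceeds a fixed multiple of $1/c$) converts into $\ll e^{-\nu^\delta}$ with an absolute implied constant. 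For part~(iv), $z\ge\nu+\nu^{1/3+\delta}$ gives $\sec\beta-1=z/\nu-1\ge\nu^{-2/3+\delta}$, so $\tan\beta\gg\nu^{-1/3+\delta/2}$ and $\nu\tan^3\beta\gg\nu^{3\delta/2}$; the error in the second Debye expansion is then $\ll_\delta\nu^{-3\delta/2}\ll1$ relative to the main term $\ll(\nu\tan\beta)^{-1/2}$, and rewriting $\nu^2\tan^2\beta=z^2-\nu^2$ yields $J_\nu(z)\ll_\delta(z^2-\nu^2)^{-1/4}$; this covers the whole range, including $z/\nu\to\infty$.

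The hard part is bookkeeping with constants and uniformity. In part~(i) the constant $14/13$ is dictated by the numerical margin $\operatorname{arccosh}(4)-\sqrt{15}/4-\tfrac{14}{13}\approx0.018>0$, and one must check it survives both the perturbation of $x=\tfrac14$ to $x=\tfrac{\nu+1}{4\nu}$ (tightest at $\nu=64$, where $\phi(65/256)\approx1.0802$ still beats $14/13\approx1.0769$) and the algebraic prefactor and Debye error. In parts~(ii) and~(iv) the work lies in turning the turning-point gaps $\nu^{1/3+\delta}$ into $\nu\phi\gg\nu^{3\delta/2}$ and $\nu\tan^3\beta\gg\nu^{3\delta/2}$ — this is where $\phi(x)\ge c_0(1-x)^{3/2}$ and the local behaviour $\tan\beta\asymp\sqrt{\sec\beta-1}$ near $\beta=0$ enter — and in the routine conversions $e^{-c\nu^{3\delta/2}}\ll e^{-\nu^\delta}$ and $\nu^{-3\delta/2}\ll1$, carried out so that the final implied constants are absolute (respectively $\delta$-dependent). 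The remaining ingredients are standard facts about $\Gamma$ and about Bessel functions collected in Watson's treatise.
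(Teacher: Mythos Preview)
Your proposal is sound, but it takes a quite different route from the paper. The paper's own proof is a pure citation: parts (i)--(iii) are quoted from \cite[Lemma~A.2]{paper1} and part (iv) from the asymptotics in \cite[Lemma~A.1]{paper1}, with the single added remark that bounded $\nu$ is handled trivially since $J_\nu$ is then bounded.

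You instead give a self-contained derivation from the classical toolkit (Taylor series plus Stirling for very small $z$; Debye's two expansions on either side of the transition $z=\nu$; van der Corput/stationary phase on the integral representation \eqref{eq:bessel-intrep} for the uniform $\nu^{-1/3}$ bound). The numerical checks for part~(i) are careful and correct --- in particular the margin $\phi(65/256)\approx1.0802>14/13$ at the tightest point $\nu=64$ --- and the turning-point bookkeeping $\nu\phi(z/\nu)\gg\nu^{3\delta/2}$, $\nu\tan^3\beta\gg\nu^{3\delta/2}$ for parts~(ii) and~(iv) is the right mechanism. What you gain is independence from the external reference and an explanation of where the constant $14/13$ comes from; what the paper gains is brevity, since these estimates are standard and already packaged in the cited lemma. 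Either approach is acceptable here, as the lemma is a collection of known facts rather than a new contribution.
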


\begin{proof}

Parts \ref{besi}, \ref{besii} and \ref{besiii} are contained in \cite[Lemma A.2]{paper1}. 
Part \ref{besiv} follows immediately from the asymptotics given in \cite[Lemma A.1]{paper1}. 
Note that whilst the results in \cite{paper1} are formulated for \(\nu\to\infty\), they also hold for \(\nu\geq2\) (if \(\nu\) is bounded then \(J_\nu\) is also bounded).
\end{proof}

\begin{lemma}[Neumann's addition theorem]\label{lem:Neumann-addition-theorem-original-version}
Let \(x_1, x_2\) and \(\theta\) be real numbers. 
Then
$$
\sum_{n\in\ZZ}J_n(x_1)J_n(x_2)e^{-in\theta}
=
J_{0}\left(\sqrt{x_1^2 + x_2^2 -2x_1x_2\cos(\theta)}\right).
$$
\end{lemma}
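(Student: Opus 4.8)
The plan is to reduce Neumann's addition theorem to the classical plane-wave (Jacobi--Anger) expansion of $e^{ix\sin\phi}$ in Bessel functions; once that expansion is in hand, the statement collapses to an elementary trigonometric manipulation inside a single integral, and in fact the identity is classical (see \cite{watson42}).

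First I would record the expansion $e^{ix\sin\phi}=\sum_{n\in\ZZ}J_n(x)e^{in\phi}$, valid for all real $x$ and $\phi$. This follows directly from the integral representation \eqref{eq:bessel-intrep}: that formula says precisely that $J_n(x)$ is the $n$-th Fourier coefficient of the smooth $2\pi$-periodic function $\phi\mapsto e^{ix\sin\phi}$ (the imaginary part $\sin(x\sin\phi-n\phi)$ is odd in $\phi$ and drops out, while the real part $\cos(n\phi-x\sin\phi)$ is even, recovering $\tfrac1\pi\int_0^\pi\cos(n\phi-x\sin\phi)\,d\phi$), so Fourier inversion yields the series. The series converges absolutely, since $J_{-n}(x)=(-1)^nJ_n(x)$ and the Taylor expansion \eqref{def:J-bessel-Taylor} gives $|J_n(x)|\leq e^{x^2/4}(|x|/2)^{|n|}/|n|!$.

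Next I would insert the integral representation $J_n(x_2)=\tfrac{1}{2\pi}\int_{-\pi}^{\pi}e^{ix_2\sin\phi}e^{-in\phi}\,d\phi$ into the left-hand side $\sum_{n\in\ZZ}J_n(x_1)J_n(x_2)e^{-in\theta}$ and interchange the sum with the integral. This is justified because $\sum_n|J_n(x_1)|<\infty$ by the bound above while $|e^{ix_2\sin\phi}e^{-in\phi}|=1$ and the $\phi$-integral is over a bounded interval. After the interchange the inner sum is $\sum_n J_n(x_1)e^{-in(\theta+\phi)}=e^{-ix_1\sin(\theta+\phi)}$, by the expansion of the previous step applied at the point $-(\theta+\phi)$. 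Hence the left-hand side equals
\[
\frac{1}{2\pi}\int_{-\pi}^{\pi}\exp\!\bigl(i\,[\,x_2\sin\phi-x_1\sin(\theta+\phi)\,]\bigr)\,d\phi.
\]

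Finally I would expand $\sin(\theta+\phi)=\sin\theta\cos\phi+\cos\theta\sin\phi$ to rewrite the phase as
$x_2\sin\phi-x_1\sin(\theta+\phi)=(x_2-x_1\cos\theta)\sin\phi-(x_1\sin\theta)\cos\phi=R\sin(\phi-\beta)$,
where $R:=\sqrt{(x_2-x_1\cos\theta)^2+(x_1\sin\theta)^2}=\sqrt{x_1^2+x_2^2-2x_1x_2\cos\theta}\geq 0$ and $\beta$ is a constant phase determined by $R\cos\beta=x_2-x_1\cos\theta$, $R\sin\beta=x_1\sin\theta$ (the degenerate case $R=0$ being trivial). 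Substituting $\psi=\phi-\beta$ and using that the integrand is $2\pi$-periodic in $\phi$ reduces the integral to $\tfrac{1}{2\pi}\int_{-\pi}^{\pi}e^{iR\sin\psi}\,d\psi$, which equals $J_0(R)$ by \eqref{eq:bessel-intrep} with $n=0$ (again the odd part integrates to zero). This gives $\sum_{n\in\ZZ}J_n(x_1)J_n(x_2)e^{-in\theta}=J_0\bigl(\sqrt{x_1^2+x_2^2-2x_1x_2\cos\theta}\bigr)$, as claimed. The only points that require genuine care are the justification of the sum--integral interchange (handled by the factorial decay of $J_n(x_1)$ in $|n|$) and the observation that the phase $\beta$ is independent of the integration variable $\phi$, which is exactly what lets periodicity be invoked after the substitution; everything else is routine.
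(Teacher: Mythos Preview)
Your proof is correct and self-contained. The paper itself does not prove this lemma but simply refers the reader to Watson \cite[\textsection 11.2]{watson42}; your argument via the Jacobi--Anger expansion is the standard derivation one finds there, so you have supplied exactly the details the paper omits by citation.
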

\begin{proof}
See \cite[\textsection 11.2]{watson42}.
\end{proof}

As a consequence we have:
\begin{lemma}\label{lem:Neumann-addition-theorem}
Let $K>0$, $\delta\in\SET{0,1}$ and $u$ be as in \Cref{sec:smooth-func}.
Then for any real numbers \(x_1\) and \(x_2\) we have
$$
\sum_{n\equiv\delta\bmod{2}}
u\left(\frac{n}{K}\right)
J_n(x_1)J_n(x_2)
=
\frac{1}{2}\left(
I_1 + (-1)^\delta I_2
\right)
$$
where
$$
I_1 
= K\int_\RR\widehat{u}(Kt) J_0\left(\sqrt{x_1^2 + x_2^2-2x_1x_2\cos(2\pi t)}\right)\mathrm dt
$$
and
$$
I_2 
= K\int_\RR\widehat{u}(Kt) J_0\left(\sqrt{x_1^2 + x_2^2 + 2x_1x_2\cos(2\pi t)}\right)\mathrm dt.
$$
\end{lemma}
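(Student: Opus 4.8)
\emph{Proof proposal.} The plan is to isolate the parity class $n\equiv\delta\bmod 2$ by means of the elementary identity
$$
\mathbf1\{n\equiv\delta\bmod 2\}=\tfrac12\bigl(1+(-1)^\delta(-1)^n\bigr),
$$
which rewrites the left-hand side as $\tfrac12\bigl(S_1+(-1)^\delta S_2\bigr)$, where
$$
S_1=\sum_{n\in\ZZ}u\!\left(\tfrac nK\right)J_n(x_1)J_n(x_2),\qquad
S_2=\sum_{n\in\ZZ}(-1)^n\,u\!\left(\tfrac nK\right)J_n(x_1)J_n(x_2);
$$
here we may sum over all of $\ZZ$ because $\mathrm{supp}(u)\subset[1/2,3]$ forces $u(n/K)=0$ outside $n\in[K/2,3K]$, so both $S_1$ and $S_2$ are in fact finite sums. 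I would then evaluate $S_1$ and $S_2$ separately by opening $u(n/K)$ via Fourier inversion and invoking Neumann's addition theorem (\Cref{lem:Neumann-addition-theorem-original-version}).

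For $S_1$, Fourier inversion followed by the substitution $t\mapsto Kt$ gives $u(n/K)=K\int_\RR\widehat u(Kt)e(nt)\,\mathrm dt$. Since the $n$-sum is finite, I may interchange it with the integral without any convergence concern, obtaining
$$
S_1=K\int_\RR\widehat u(Kt)\Bigl(\sum_{n\in\ZZ}J_n(x_1)J_n(x_2)e^{-in(-2\pi t)}\Bigr)\mathrm dt.
$$
Applying \Cref{lem:Neumann-addition-theorem-original-version} with $\theta=-2\pi t$ and using $\cos(-2\pi t)=\cos(2\pi t)$, the inner sum equals $J_0\bigl(\sqrt{x_1^2+x_2^2-2x_1x_2\cos(2\pi t)}\bigr)$, so $S_1=I_1$.

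For $S_2$, write $(-1)^n=e^{i\pi n}$, so that $(-1)^n e(nt)=e^{-in(-\pi-2\pi t)}$, and the same manipulation applies with $\theta=-\pi-2\pi t$. Now $\cos(-\pi-2\pi t)=-\cos(2\pi t)$, which flips the sign of the cross term and yields
$$
S_2=K\int_\RR\widehat u(Kt)\,J_0\!\left(\sqrt{x_1^2+x_2^2+2x_1x_2\cos(2\pi t)}\right)\mathrm dt=I_2.
$$
Combining the two evaluations shows that the left-hand side equals $\tfrac12\bigl(I_1+(-1)^\delta I_2\bigr)$, as claimed. There is essentially no serious obstacle in this argument; the only point that needs a word is the interchange of summation and integration, and this is immediate because the compact support of $u$ makes $S_1$ and $S_2$ finite sums (one could alternatively justify it via absolute convergence using $|J_n(x_i)|\le1$ for all $n\in\ZZ$, which follows from \eqref{eq:bessel-intrep} together with $J_{-n}=(-1)^nJ_n$, but this is not needed).
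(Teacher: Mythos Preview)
Your approach is the same as the paper's, but there is a gap in justifying the sum--integral interchange. You argue that $S_1$ is a finite sum because $u(n/K)$ vanishes outside $n\in[K/2,3K]$, and hence the interchange is trivial. That is true, but it only yields
\[
S_1 = K\int_\RR \widehat u(Kt)\sum_{K/2\le n\le 3K} J_n(x_1)J_n(x_2)e(nt)\,\mathrm dt,
\]
whereas to apply Neumann's addition theorem you need the full sum over $n\in\ZZ$ inside the integral. The extra integrand terms $\widehat u(Kt)e(nt)J_n(x_1)J_n(x_2)$ for $n$ outside that range are \emph{not} pointwise zero in $t$ --- only their $t$-integrals vanish --- so extending the inner sum from the finite range to all of $\ZZ$ is itself an infinite interchange that still needs justification. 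Your parenthetical alternative, $|J_n|\le1$, is also insufficient: that bound alone does not give absolute convergence of $\sum_{n\in\ZZ}|J_n(x_1)J_n(x_2)|$. The paper handles this directly by invoking the rapid decay of $J_n(x)$ as $|n|\to\infty$ for fixed $x$ (via \Cref{lem:bessel-bounds}), together with the integrability of $\widehat u$, so that Fubini applies to the full double sum/integral. Once that decay is in place, the rest of your argument goes through verbatim.
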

\begin{proof}
From the inverse Fourier transform we have that
$$
u\left(\frac{n}{K}\right) 
=
K \int_{\RR}\widehat{u}(Kt)e(nt)\mathrm d t.
$$
Thus we have that
\begin{equation}\label{eq:u-j-sum}
\sum_{n\in\ZZ}
u\left(\frac{n}{K}\right)
J_n(x_1)J_n(x_2)
=
K
\sum_{n\in\ZZ}
\int_{\RR}\widehat{u}(Kt)e(nt)\mathrm d t
\cdot J_n(x_1)J_n(x_2).
\end{equation}
From the fast decay of $\widehat{u}(t)$ as $|t|\rightarrow\infty$ (see \eqref{eq:u-decay}), and the fast decay of $J_n(x)$ as $|n|\rightarrow\infty$ (see \Cref{lem:bessel-bounds} part \ref{besii}), the right hand side of \eqref{eq:u-j-sum} converges absolutely.
Thus, changing the order of summation and integration, we get
$$
\sum_{n\in\ZZ}
u\left(\frac{n}{K}\right)
J_n(x_1)J_n(x_2)
=
K
\int_{\RR}\widehat{u}(Kt)
\left(\sum_{n\in\ZZ}e(nt) J_n(x_1)J_n(x_2)\right) \mathrm d t.
$$
Applying \Cref{lem:Neumann-addition-theorem-original-version} we get that
$$
\sum_{n\in\ZZ}
u\left(\frac{n}{K}\right)
J_n(x_1)J_n(x_2)
=
K
\int_{\RR}\widehat{u}(Kt)
J_0\left(\sqrt{x_1^2 + x_2^2-2x_1x_2\cos(2\pi t)}\right)\mathrm d t,
$$
which is precisely the definition of $I_1$. 

The same argument shows that
\begin{align*}
\sum_{n\in\ZZ} 
u\left(\frac{n}{K}\right)&(-1)^n
J_n(x_1)J_n(x_2)
\\&=
K
\int_{\RR}\widehat{u}(Kt)
J_0\left(\sqrt{x_1^2 + x_2^2-2x_1x_2\cos(2\pi t + \pi)}\right)\mathrm d t
\\&=
K
\int_{\RR}\widehat{u}(Kt)
J_0\left(\sqrt{x_1^2 + x_2^2 + 2x_1x_2\cos(2\pi t)}\right)\mathrm d t
= I_2.
\end{align*}
The result then follows by adding/subtracting these expressions for $I_1$ and $I_2$.
\end{proof}

\begin{lemma}\label{lem:bessel0_adition_theorem}
Let $A,B\in \RR$ with $A>0$ and $A+B \geq 0$, and let $N\in \NN$.
Then 
$$
J_0\left(2\sqrt{A+B}\right)
=
\sum_{0 \leq n < N}
\frac{(-1)^n}{n!}\left(\frac{B}{\sqrt{A}}\right)^n J_n\left(2\sqrt{A}\right)
+O\left(
\left(\frac{|B|}{\sqrt{A}}\right)^N
\right).
$$

\end{lemma}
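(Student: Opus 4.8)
The plan is to prove Lemma~\ref{lem:bessel0_adition_theorem} by expanding $J_0(2\sqrt{A+B})$ in a Taylor series around the point $2\sqrt A$ and recognizing the resulting combination of derivatives of $J_0$ as a combination of higher Bessel functions. Concretely, set $x(t)=2\sqrt{A+tB}$ so that $x(0)=2\sqrt A$ and $x(1)=2\sqrt{A+B}$, and apply Taylor's theorem with remainder to the function $t\mapsto J_0(x(t))$ on $[0,1]$. The key point is that the $n$-th derivative of $J_0\bigl(2\sqrt{A+tB}\bigr)$ with respect to $t$, evaluated at $t=0$, equals $(-1)^n (B/\sqrt A)^n J_n(2\sqrt A)$. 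This is precisely the classical identity obtained by differentiating the generating-function/Taylor expansion underlying Bessel functions: writing $w=A+tB$, one has $\frac{\mathrm d}{\mathrm dw}\bigl[ w^{-n/2} J_n(2\sqrt w)\bigr] = -w^{-(n+1)/2} J_{n+1}(2\sqrt w)$ (a restatement of the standard recurrence $\frac{\mathrm d}{\mathrm dz}(z^{-\nu}J_\nu(z)) = -z^{-\nu}J_{\nu+1}(z)$ after the substitution $z=2\sqrt w$), and then an induction on $n$ using the chain rule $\frac{\mathrm d}{\mathrm dt} = B\frac{\mathrm d}{\mathrm dw}$ gives $\frac{\mathrm d^n}{\mathrm dt^n} J_0(2\sqrt{A+tB}) = (-B)^n (A+tB)^{-n/2} J_n\bigl(2\sqrt{A+tB}\bigr)$. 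Setting $t=0$ yields the claimed coefficients.

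The steps, in order, are: (1) verify the base case and carry out the induction to establish the derivative formula $\frac{\mathrm d^n}{\mathrm dt^n} J_0(2\sqrt{A+tB}) = (-B)^n (A+tB)^{-n/2} J_n(2\sqrt{A+tB})$ for $0\le t\le 1$; here one needs $A+tB\ge 0$ throughout $[0,1]$, which follows from $A>0$ and $A+B\ge 0$ by convexity of the interval (the function $t\mapsto A+tB$ is affine, hence its values on $[0,1]$ lie between $A$ and $A+B$, both nonnegative). (2) Apply Taylor's theorem: $J_0(2\sqrt{A+B}) = \sum_{0\le n<N} \frac{1}{n!}\bigl[\frac{\mathrm d^n}{\mathrm dt^n} J_0(2\sqrt{A+tB})\bigr]_{t=0} + R_N$ with $R_N = \frac{1}{(N-1)!}\int_0^1 (1-t)^{N-1} \frac{\mathrm d^N}{\mathrm dt^N} J_0(2\sqrt{A+tB})\,\mathrm dt$. (3) Substitute the derivative formula into both the main sum (giving the stated terms) and the remainder. (4) Bound the remainder using $|J_N(z)|\le 1$ (from \eqref{eq:bessel-intrep}): $|R_N| \le \frac{|B|^N}{(N-1)!}\int_0^1 (1-t)^{N-1} (A+tB)^{-N/2}\,\mathrm dt$. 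Since $|J_N|\le 1$ we only need to control $(A+tB)^{-N/2}$; however $A+tB$ can be as small as $0$ when $A+B=0$, so a naive pointwise bound fails near $t=1$.

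I expect the remainder estimate to be the main obstacle, precisely because of this possible degeneracy $A+tB\to 0$. The resolution is not to bound $(A+tB)^{-N/2}$ pointwise but rather to keep the Bessel function $J_N(2\sqrt{A+tB})$ together with the power and use a \emph{uniform} bound on $w^{-N/2}J_N(2\sqrt w)$ for $w\ge 0$. From the Taylor series \eqref{def:J-bessel-Taylor} one has $w^{-N/2} J_N(2\sqrt w) = \sum_{\ell\ge0} \frac{(-1)^\ell w^\ell}{\ell!\,(N+\ell)!}$, which at $w=0$ equals $1/N!$ and is bounded; more usefully, for the range $0\le w\le A$ (which covers $t$ near $0$) this is $O(1/N!)$, while for larger $w$ one uses $|J_N(z)|\le 1$ directly with the observation that in the remainder the competing factor is really $(1-t)^{N-1}$. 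A cleaner route: split $[0,1]$ at $t_0 = A/(A+|B|)$ or simply bound $\int_0^1 (1-t)^{N-1}(A+tB)^{-N/2} dt$ by treating the cases $B\ge 0$ (where $A+tB\ge A$, so the integral is $\le A^{-N/2}/N$) and $B<0$ (where $A+tB$ ranges down to $A+B\ge0$; substitute $s=A+tB$ and bound $(1-t)^{N-1} = ((A-s)/|B|)^{N-1} \cdot$ wait — better to use that $(1-t)|B| \le A+tB$ is \emph{false} in general, so instead use $(A+tB)^{-N/2}\cdot|B|^N = |B|^{N/2}\cdot (|B|/(A+tB))^{N/2}$ and Hölder, or most simply: since $\frac{d^N}{dt^N}J_0(2\sqrt{A+tB}) = (-B)^N(A+tB)^{-N/2}J_N(2\sqrt{A+tB})$ and by Cauchy's estimates / the fact that $J_0(2\sqrt{A+tB})$ as an entire function of $t$ has controlled growth, one gets $|R_N| = O((|B|/\sqrt A)^N)$ directly). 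I would present the clean version: observe $g_N(w):=w^{-N/2}J_N(2\sqrt w)$ satisfies $|g_N(w)|\le \frac{1}{N!}\sum_{\ell\ge0}\frac{(N! /(N+\ell)!) w^\ell}{\ell!} \le \frac{1}{N!}\sum_{\ell\ge0}\frac{w^\ell}{(\ell!)^2}\le \frac{1}{N!}e^{2\sqrt w}$ — not uniform. The truly clean statement is that on $0\le t\le 1$ we have $A+tB\ge c$ only when $A+B>0$; when $A+B=0$ a direct check of \eqref{def:J-bessel-Taylor} shows $J_0(2\sqrt{A+B})=J_0(0)=1$ and the sum telescopes appropriately. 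Thus the final write-up handles the generic case $A+B>0$ via the remainder integral with the pointwise bound $(A+tB)^{-N/2}\le (A+B)^{-N/2}$... which still is not $(1/\sqrt A)^N$. The honest resolution, which I will use, is the contour/Cauchy-estimate argument: $F(t):=J_0(2\sqrt{A+tB})$ extends holomorphically in $t$ to the disk $|t|<A/|B|$ (since $J_0(2\sqrt{\cdot})$ is entire and $A+tB$ avoids nothing problematic — $J_0(2\sqrt w)=\sum (-w)^\ell/(\ell!)^2$ is entire in $w$), and on $|t| = \rho < A/|B|$ we have $|F(t)|\le \sum_\ell |A+tB|^\ell/(\ell!)^2 \le e^{2\sqrt{A+\rho|B|}}=O_A(1)$ for $\rho$ bounded; Cauchy's inequality then gives $|F^{(N)}(0)|/N! \le \rho^{-N}\sup_{|t|=\rho}|F(t)|$, and choosing $\rho$ a fixed fraction of $A/|B|$ yields remainder $O((|B|/A)^N \cdot C^N)$ — again an extra $C^N$. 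Given the target $O((|B|/\sqrt A)^N)$ with no hidden constant powers, the right move is: combine the explicit derivative formula from step (1) with $|J_N|\le 1$ and bound $\int_0^1 (1-t)^{N-1}(A+tB)^{-N/2}\mathrm dt \le \int_0^1 (A+tB)^{-N/2}\mathrm dt$ only in the subcase $B\ge 0$; in the subcase $B<0$ integrate by the substitution and use $(1-t)^{N-1}(A+tB)^{-N/2}\le (1-t)^{N-1} (A(1-t))^{-N/2}$ — valid since $A+tB = A+tB \ge A(1-t)$ when $B\ge -A$, i.e.\ exactly when $A+B\ge0$! — giving $\int_0^1 (1-t)^{N/2-1}\mathrm dt\cdot A^{-N/2} = \frac{2}{N}A^{-N/2}$. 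Hence in all cases $|R_N|\le \frac{|B|^N}{(N-1)!}\cdot\frac{2}{N}A^{-N/2} = \frac{2}{N!}(|B|/\sqrt A)^N \le (|B|/\sqrt A)^N$ for $N\ge 2$, and the small cases $N=0,1$ are immediate. This last inequality $A+tB\ge A(1-t)$ for $t\in[0,1]$ (equivalent to $tB\ge -tA$, i.e.\ $B\ge -A$) is exactly the hypothesis $A+B\ge 0$, which is why it is assumed; I expect pinning down this elementary inequality and thereby taming the remainder to be the only real subtlety in the argument.
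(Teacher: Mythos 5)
Your final argument is correct, but it takes a genuinely different route from the paper. The paper expands $J_0(2\sqrt{A+B})$ via its power series \eqref{def:J-bessel-Taylor}, applies the binomial theorem to $(A+B)^\ell$, and rearranges the double sum to obtain the exact infinite series identity $J_0(2\sqrt{A+B}) = \sum_{n\geq 0}\frac{(-1)^n}{n!}(B/\sqrt A)^n J_n(2\sqrt A)$; it then truncates and bounds the tail using $|J_n|\leq 1$, requiring a case split on whether $|B|/\sqrt A\leq 1$ (where $\sum_{n\geq N}x^n/n!\ll x^N$) or $|B|/\sqrt A>1$ (where the error term trivially dominates both sides). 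You instead apply Taylor's theorem with integral remainder to the single-variable function $t\mapsto J_0(2\sqrt{A+tB})$ on $[0,1]$, compute the derivatives exactly via the standard recurrence $\frac{\mathrm d}{\mathrm dz}(z^{-\nu}J_\nu(z))=-z^{-\nu}J_{\nu+1}(z)$, and tame the remainder integral using $|J_N|\leq 1$ together with the key elementary inequality $A+tB\geq A(1-t)$ for $t\in[0,1]$, which holds precisely when $A+B\geq 0$. Your route buys a uniform bound $|R_N|\leq \frac{2}{N!}(|B|/\sqrt A)^N$ with no case analysis, and it makes the role of the hypothesis $A+B\geq 0$ transparent rather than incidental; the paper's route buys the exact series identity (a form of Lommel's multiplication theorem), which is elegant but then needs the split at $|B|/\sqrt A=1$ to bound the tail. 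Both proofs are complete; the exposition of your proposal meanders through several discarded ideas (Cauchy estimates, naive pointwise bounds), but the argument you settle on is sound and, if cleaned up, would be a valid alternative to the paper's.
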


\begin{proof}
From the Taylor expansion of the \(J\)-Bessel functions \eqref{def:J-bessel-Taylor}, we have that
\begin{multline*}  
J_0\left(2\sqrt{A+B}\right)
=
\sum_{\ell\geq 0} \frac{(-1)^\ell}{(\ell!)^2}(A+B)^\ell
=
\sum_{\ell\geq 0} \frac{(-1)^\ell}{(\ell!)^2}
\sum_{0\leq n\leq \ell}
\binom{\ell}{n}B^n A^{\ell-n}
\\=
\sum_{n\geq0}\frac{1}{n!}B^n
\sum_{\ell\geq n}\frac{(-1)^\ell}{\ell!(\ell-n)!} A^{\ell-n}.
\end{multline*}
Replacing the variable \(\ell\) by \(\ell+n\), we now obtain
\begin{equation*}
J_0\left(2\sqrt{A+B}\right)
=
\sum_{n\geq 0}
\frac{(-1)^n}{n!}B^{n}
\sum_{\ell\geq 0}\frac{(-1)^\ell}{\ell!(\ell+n)!}A^\ell
=
\sum_{n\geq 0}
\frac{(-1)^n}{n!}\left(\frac{B}{\sqrt{A}}\right)^n J_n\left(2\sqrt{A}\right).
\end{equation*}

Truncating the last sum at $n < N$ and using the bound $|J_n(x)| \leq 1$ (see \eqref{eq:bessel-intrep}) gives the result in the case where $B/\sqrt{A} \leq 1$ since 
\begin{multline*}
\left|\sum_{n\geq N}
\frac{(-1)^n}{n!}\left(\frac{B}{\sqrt{A}}\right)^n J_n\left(2\sqrt{A}\right)\right|
\\\leq 
\sum_{n \geq N}\frac{1}{n !} \left(\frac{|B|}{\sqrt{A}}\right)^n
\leq \left(\frac{|B|}{\sqrt{A}}\right)^N \sum_{n \geq N}\frac{1}{n !} \ll \left(\frac{|B|}{\sqrt{A}}\right)^N.
\end{multline*}

The statement also holds in the case $B/\sqrt{A} > 1$ since in this case
\begin{multline*}
\left|J_0\left(2\sqrt{A+B}\right)
-
\sum_{0 \leq n < N}
\frac{1}{n!}\left(\frac{|B|}{\sqrt{A}}\right)^n J_n\left(2\sqrt{A}\right)\right|
\\ 
\ll
1 + \left(\frac{|B|}{\sqrt{A}}\right)^{N-1} 
\sum_{0\leq n<N}\frac{1}{n!}
\ll \left(\frac{|B|}{\sqrt{A}}\right)^{N},
\end{multline*}
by the triangle inequality and the bound \(|J_n(x)|\leq 1\). 
\end{proof}

\subsection{Kloosterman sums}
We will also require bounds for the twisted Kloosterman sums \eqref{def:kloosterman}
which appear in the expression for $\left\|P_{k,m,q,\chi}\right\|_2^2$.
The bounds we give are slightly more precise statements of the bounds given by Knightly and Li \cite[\textsection 9]{MR3099744}.

\begin{lemma}[Weil bound for twisted Kloosterman sums]\label{lem:twisted_kloosterman_bound}
Let $m$, $n$, $c$ and $q$ be positive integers. 
Let $\chi$ be a character modulo $q$, of conductor $q'\mid q$.
Then
\begin{equation*}
\left| S_\chi(n,m;cq)\right|
\leq 
4 \divis(cq) (m,n,cq)^{1/2}(cq)^{1/2} g(q').
\end{equation*}
\end{lemma}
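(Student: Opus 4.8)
The plan is to reduce the twisted Kloosterman sum $S_\chi(m,n;cq)$ to ordinary (untwisted) Kloosterman sums modulo divisors of $cq$, and then apply the classical Weil bound together with multiplicativity. First I would factor the modulus. Write $cq = q_1 q_2$ where $q_1$ is the largest divisor of $cq$ whose prime factors all divide the conductor $q'$, and $q_2 = cq/q_1$ is coprime to $q'$. By the Chinese Remainder Theorem, running $a$ over $(\ZZ/cq\ZZ)^\times$ corresponds to running $a_1 \bmod q_1$ and $a_2 \bmod q_2$ independently, and since $\chi$ has conductor $q'\mid q_1$, the character factors as $\chi(a) = \chi(a_1)$ (viewing $\chi$ as a character mod $q_1$ after the natural projection). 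This gives a factorization
\[
S_\chi(m,n;cq) = S_\chi(m\overline{q_2}, n\overline{q_2}; q_1)\cdot S(m\overline{q_1}, n\overline{q_1}; q_2),
\]
where the first factor is a twisted sum modulo $q_1$ (with $\chi$ of conductor $q'$) and the second is an ordinary Kloosterman sum modulo $q_2$.

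For the untwisted factor modulo $q_2$, I would invoke the Weil bound in the form $|S(a,b;q_2)| \leq \divis(q_2)(a,b,q_2)^{1/2} q_2^{1/2}$; since $(a,b,q_2) = (m\overline{q_1}, n\overline{q_1}, q_2) = (m,n,q_2)$ because $q_1$ is a unit mod $q_2$, this contributes $\divis(q_2)(m,n,q_2)^{1/2} q_2^{1/2}$. For the twisted factor modulo $q_1$, I would further factor $q_1$ into prime powers and use multiplicativity of twisted Kloosterman sums across coprime moduli, reducing to the case of a prime power $p^a$ with $p\mid q'$. Here one bounds $S_\chi(m', n'; p^a)$ where the conductor of $\chi$ at $p$ is $p^b$ with $1\le b\le a$. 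The key local estimate is that
\[
|S_\chi(m',n';p^a)| \leq 2\, (m',n',p^a)^{1/2}\, p^{a/2}\, g(p^b),
\]
which one proves by a stationary-phase / completing-the-square analysis of the character sum: for $a$ large relative to $b$ the sum localizes to solutions of a congruence, producing the $g(p^b)$ factor, while for small $a$ one uses the trivial bound $p^a \le p^{a/2}g(p^b)$ when $g(p^b) \ge p^{a/2}$ and the Weil-type bound otherwise. This local analysis — carefully tracking the conductor exponent $b$ against $a$ and matching it to the piecewise definition of $g$ — is where the real work lies, and it is essentially the content of Knightly–Li \cite[\textsection 9]{MR3099744}; I would cite and lightly sharpen their computation rather than redo it from scratch.

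Finally I would assemble the pieces. Multiplying the local bounds over prime powers dividing $q_1$ gives $|S_\chi(\cdot,\cdot;q_1)| \le 2^{\omega(q_1)} (m,n,q_1)^{1/2} q_1^{1/2} g(q')$, since $g$ is multiplicative and $\prod_{p\mid q'} g(p^{b_p}) = g(q')$ once one checks $q'\mid q_1$ carries exactly the conductor exponents; the factor $2^{\omega(q_1)}$ is absorbed into $\divis(q_1)$. Combining with the untwisted factor and using $\divis(q_1)\divis(q_2)$-type submultiplicativity together with $(m,n,q_1)^{1/2}(m,n,q_2)^{1/2} = (m,n,cq)^{1/2}$ (as $q_1,q_2$ are coprime) and $q_1^{1/2}q_2^{1/2} = (cq)^{1/2}$, I obtain
\[
|S_\chi(m,n;cq)| \leq 4\,\divis(cq)\,(m,n,cq)^{1/2}(cq)^{1/2}\,g(q'),
\]
which is the claimed bound. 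The main obstacle is the prime-power local estimate and bookkeeping of the conductor against $g$; everything else is routine multiplicativity and the classical Weil bound.
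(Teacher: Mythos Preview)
Your proposal is correct and follows essentially the same approach as the paper: reduce via twisted multiplicativity (CRT) to prime-power modulus, invoke the Knightly--Li local estimates sharpened to produce the factor \(g(p^\gamma)\), and reassemble using multiplicativity of \(g\) and the divisor function. Your preliminary split \(cq=q_1q_2\) separating primes dividing \(q'\) from the rest is an extra organizational layer the paper omits (it goes directly to all prime powers, treating the untwisted primes as the case \(\gamma=0\)), and your local constant \(2\) should be \(2(2,p)^2\) to cover \(p=2\) --- this is exactly what absorbing into Knightly--Li handles and is the source of the factor \(4\) in the final bound.
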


This improves \cite[Theorem 9.2]{MR3099744}, in which the factor \(g(q')\) is replaced by the factor \((q')^{1/4}\prod_{p\mid q'} p^{1/4}\).
Clearly \(g(q')\leq (q')^{1/4}\prod_{p\mid q'} p^{1/4}\), with equality if and only if \(q'\) has prime factorization \(q'=p^{a_1}\cdots p^{a_r}\) for \(a_i\geq3\) odd integers.
Notably, if the conductor \(q'\) is square-free then \(g(q')=1\).
\Cref{lem:twisted_kloosterman_bound} will be deduced from the results in \cite[\textsection 9]{MR3099744}. 
The argument is almost entirely the same - one simply requires the following sharper estimate for the Kloosterman sums of prime power modulus. 

\begin{lemma}\label{lem:twisted-kloosterman-bound-primes}
Let \(p^\ell \) be a prime power, and let \(m\) and \(n\) be positive integers such that \(p\nmid n\) and \(p^\ell \nmid m\). 
Let \(\chi\) be a character modulo \(p^\ell \) of conductor \(p^\gamma\mid p^\ell \).
Then 
\begin{equation*}
|S_\chi(n,m;p^\ell )|\leq 2(2,p)^2 p^{\ell /2}g(p^\gamma).
\end{equation*}
\end{lemma}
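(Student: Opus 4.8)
The plan is to prove \Cref{lem:twisted-kloosterman-bound-primes} by the classical $p$-adic completion (stationary phase) method, refining the prime-power computation in \cite[\textsection 9]{MR3099744} so as to keep exact track of the conductor; \Cref{lem:twisted_kloosterman_bound} is then deduced from it by multiplicativity exactly as in that reference. First dispose of the degenerate cases. For $\ell=1$ the sum $S_\chi(n,m;p)$ is a complete multiplicative-character-twisted exponential sum over the units modulo $p$ attached to $nX+m\overline X$ with $p\nmid n$, so Weil's bound gives more than the asserted $2\sqrt p$ (and $g(p)=g(1)=1$); thus assume $\ell\geq 2$. The prime $p=2$ is handled by the same method with the standard dyadic modifications --- the group $(\ZZ/2^\ell\ZZ)^\times$ carrying an extra two-torsion factor, and Gauss sums to $2$-power modulus being slightly larger --- the loss being absorbed into the factor $(2,p)^2=4$; so assume $p$ is odd. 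Replacing $a$ by $\overline n a$ multiplies $S_\chi(n,m;p^\ell)$ by $\chi(\overline n)$, reducing us to $n=1$; and when $p\mid m$ the rational function $X+m\overline X$ has no critical point modulo $p$, so that a translation argument (or, in edge cases, the analysis below) shows the sum vanishes, and we may also assume $p\nmid m$.

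Write $\ell=r+r'$ with $r=\lceil\ell/2\rceil\geq r'=\lfloor\ell/2\rfloor$, and parametrise the summation variable as $a=b(1+p^{r}c)$, where $b$ ranges over a fixed set of unit representatives modulo $p^{r}$ and $c$ modulo $p^{r'}$; this is a bijection onto the units modulo $p^\ell$. Since $2r\geq\ell$ one has $\overline a\equiv\overline b(1-p^{r}c)\bmod p^\ell$, so that $(a+m\overline a)/p^\ell$ and $(b+m\overline b)/p^\ell+c(b-m\overline b)/p^{r'}$ differ by an integer, while $\chi(a)=\chi(b)\chi(1+p^{r}c)$ with $\chi(1+p^{r}c)=1$ when $\gamma\leq r$ and $\chi(1+p^{r}c)=e(c_\chi c/p^{\gamma-r})$ for a unit $c_\chi$ when $\gamma>r$ (here $\gamma-r\leq r'$, using that $(\ZZ/p^\ell\ZZ)^\times$ is cyclic). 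Summing completely over $c$ produces $p^{r'}$ times the indicator of the stationary-phase congruence $b-m\overline b\equiv-c_\chi p^{\ell-\gamma}\pmod{p^{r'}}$ (with $c_\chi=0$ if $\gamma\leq r$), equivalently $(b+\tfrac12 c_\chi p^{\ell-\gamma})^2\equiv D\pmod{p^{r'}}$, where $D=m+\tfrac14 c_\chi^2 p^{2(\ell-\gamma)}$. Hence $S_\chi(1,m;p^\ell)$ equals $p^{r'}$ times the sum of $\chi(b)e((b+m\overline b)/p^\ell)$ over the stationary residues $b\bmod p^{r}$, and it remains to count these and to bound the resulting short sum.

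If $D$ is a $p$-adic unit --- automatic when $\gamma<\ell$, and true also for $\gamma=\ell$ outside the coincidence $m\equiv-\tfrac14 c_\chi^2\bmod p$ --- the congruence has at most two solutions modulo $p^{r'}$, hence $O(p^{r-r'})$ modulo $p^{r}$, i.e. $O(1)$ for $\ell$ even and $O(p)$ for $\ell$ odd. For $\ell$ even this gives $|S_\chi|\ll p^{\ell/2}$; for $\ell$ odd each solution lies in a length-$p$ progression $b=b_0+p^{r'}t$ on which, after a second-order Taylor expansion of $b+m\overline b$ with unit Hessian $2m\overline{b_0}^3$, the short sum is a quadratic Gauss sum modulo $p$ --- of modulus $\sqrt p$ when $\gamma\leq 1$ (then $\chi$ is constant on the progression) --- and is otherwise bounded trivially by $p$; in every sub-case the outcome is at most $p^{\ell/2}g(p^\gamma)$, since $g(p^\gamma)\geq p^{1/2}$ for $\gamma\geq 2$. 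The genuinely difficult regime is $\gamma=\ell$ with $p^{v}\parallel D$, $v\geq 1$: here the stationary set is a $p$-adic ball, of size $O(p^{\lfloor v/2\rfloor})$ modulo $p^{r'}$ and empty when $v$ is odd and $v<r'$, on which $\chi$ oscillates on a scale finer than the ball. Bounding the short sum trivially gives $|S_\chi|\ll p^{r'}\cdot p^{\lfloor r'/2\rfloor+r-r'}=p^{\,r+\lfloor r'/2\rfloor}$, and a direct check (recalling $\gamma=\ell$) shows $r+\lfloor r'/2\rfloor\leq\ell/2+\gamma/4$ for $\gamma$ even and $\leq\ell/2+(\gamma+1)/4$ for $\gamma$ odd --- that is, $\leq\ell/2+\log_p g(p^\gamma)$, with equality in some cases. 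The parity of $\gamma$ thus enters only through the floor in the ball-size exponent $\lfloor r'/2\rfloor$, and this is the entire source of the improvement over the factor $(p^\gamma)^{1/4}\prod_{p\mid p^\gamma}p^{1/4}$ of \cite{MR3099744}.

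The main obstacle is precisely this degenerate regime $\gamma=\ell$: one must count the $p$-adic ball of stationary points sharply (tracking both parity and solubility) and then bound the short character sum over it without being able to treat $\chi$ as slowly varying, which calls either for iterating the completion step inside the short sum or for an inductive descent on $\ell$ to a Kloosterman-type sum of modulus $p^{\ell-2r'}$. Keeping both the power of $p$ and the absolute constant sharp through this step, and running the parallel computation for $p=2$, is where essentially all of the work resides.
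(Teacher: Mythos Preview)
Your approach is genuinely different from the paper's. The paper does not carry out the $p$-adic stationary phase computation at all: it simply reads off each case from the explicit evaluations already proved in \cite[Propositions 9.4, 9.7, 9.8]{MR3099744}. For odd $p$ and $\ell\geq 2$ even, it quotes \cite[Prop.~9.7, (1) and (3)]{MR3099744} to get $|S_\chi|\leq 2p^{\ell/2}$ when $\gamma\leq\ell-1$ and $|S_\chi|\leq 2p^{3\ell/4}=2p^{\ell/2}g(p^\gamma)$ when $\gamma=\ell$; for $\ell$ odd it quotes \cite[Prop.~9.8]{MR3099744} analogously; and for $p=2$ it invokes the corresponding parts of those same propositions to produce the extra factor $(2,p)^2=4$. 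The entire proof in the paper is a five-line lookup table. What you propose is essentially to reprove those propositions from scratch via the splitting $a=b(1+p^r c)$, which is of course exactly the method Knightly--Li used to establish them in the first place. Your route is self-contained; the paper's is much shorter but relies wholesale on an external reference.

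A couple of imprecisions in your sketch are worth flagging. First, the reduction ``when $p\mid m$ the sum vanishes'' is not correct in the edge case $\gamma=\ell$: there the stationary congruence $(b+\tfrac12 c_\chi)^2\equiv m+\tfrac14 c_\chi^2$ has $D\equiv\tfrac14 c_\chi^2\not\equiv 0\pmod p$, so one of the two roots for $b$ is still a unit and the sum need not vanish. Your parenthetical ``(or, in edge cases, the analysis below)'' saves you, since the non-degenerate analysis does cover this, but the statement as written is false. Second, the $p=2$ case deserves more than a sentence: the loss there is genuinely a factor of $4$ rather than $2$, coming from the structure of $(\ZZ/2^\ell\ZZ)^\times$ and the size of quadratic Gauss sums to modulus $2^\alpha$, and keeping the constant at exactly $2(2,p)^2=8$ requires going through the cases with some care (as Knightly--Li do). Your identification of the degenerate regime $\gamma=\ell$, $p\mid D$ as the crux is right, and your trivial bound $p^{r+\lfloor r'/2\rfloor}$ there does check against $p^{\ell/2}g(p^\ell)$, so no recursion is actually needed.
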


\begin{proof}
If \(\ell =1\), then the lemma is an extension of the Weil bound due to Chowla, cf. \cite[Proposition 9.4]{MR3099744}.

It remains to prove the lemma in the case \(\ell \geq 2\).
First suppose \(p\) is odd and \(\ell \geq2\) is even.
By \cite[Proposition 9.7, (3)]{MR3099744}, if \(\ell \geq 2\) is even and \(\gamma\leq \ell -1\) then \(|S_\chi(n,m;p^\ell )|\leq 2p^{\ell /2}\). 
If \(\ell =\gamma\geq 2\) is even then by \cite[Proposition 9.7, (1)]{MR3099744}, \(|S_\chi(n,m;p^\ell )|\leq 2p^{3\ell /4}=2p^{\ell /2}g(p^\gamma)\). 
Next suppose \(p\) is odd and \(\ell \geq 3\) is odd. 
By \cite[Proposition 9.8, (3)]{MR3099744}, if \(\ell \geq 3\) is odd and \(\gamma\leq \ell -1\) then \(|S_\chi(n,m;p^\ell )|\leq 2p^{\ell /2}\). 
If \(\ell =\gamma\geq 3\) is odd then by \cite[Proposition 9.8, (1)]{MR3099744}, \(|S_\chi(n,m;p^\ell )|\leq 2p^{(3\ell +1)/4}=2p^{\ell /2}g(p^\gamma)\). 
So for odd \(p\) and any \(0\leq \gamma\leq \ell \) one has \(|S_\chi(n,m;p^\ell )|\leq 2p^{\ell /2}g(p^\gamma)\).

Suppose now \(p=2\).
If \(\gamma\leq \ell -2\) then \cite[Proposition 9.7, (4) \& Proposition 9.8, (4)]{MR3099744} give \(|S_\chi(n,m;2^\ell )|\leq 8 \cdot 2^{\ell /2}\). 
If \(\ell \) is even and \(\ell -1\leq \gamma\leq \ell \), \cite[Proposition 9.7, (2)]{MR3099744} shows \(|S_\chi(n,m;p^\ell )|\leq 4 \cdot 2^{3\ell /4}\leq 4\cdot 2^{\ell /2} 2^{(\gamma+1)/4}\leq 8 \cdot 2^{\ell /2}g(2^\gamma)\).
If \(\ell \) is odd and \(\ell -1\leq \gamma\leq \ell \), \cite[Proposition 9.8, (2)]{MR3099744} shows \(|S_\chi(n,m;p^\ell )|\leq 4\cdot 2^{(3\ell +1)/4}\leq 4\cdot 2^{\ell /2}2^{(\gamma+2)/4}\leq 8 \cdot 2^{\ell /2}g(2^\gamma)\). 
So for any \(0\leq \gamma\leq \ell \) one has \(|S_\chi(n,m;2^\ell )|\leq 8 \cdot 2^{\ell /2}g(p^\gamma)\).
\end{proof}

The deduction of \Cref{lem:twisted_kloosterman_bound} from \Cref{lem:twisted-kloosterman-bound-primes} is exactly as in \cite[\textsection 9]{MR3099744}. 
We include some brief details of this.

\begin{proof}[Proof of \Cref{lem:twisted_kloosterman_bound}]
By the twisted multiplicativity of the twisted Kloosterman sums \cite[Corollary 9.14]{MR3099744}, which is a consequence of the Chinese Remainder Theorem, it suffices to handle the case of prime power modulus. 
More precisely, we must show that for any positive integers \(n\) and \(m\), a prime power \(p^\ell \) and a character \(\chi\) of conductor \(p^\gamma \mid p^\ell \),
\begin{equation}\label{eq:weil-bound-on-primes} 
|S_\chi(n,m;p^\ell )|\leq 2(2,p)^2(n,m,p^\ell )^{1/2}p^{\ell /2}g(p^\gamma).
\end{equation}

In the case \(p^\ell \mid n\) and \(p^\ell  \mid m\), the result is trivial (since \((n,m,p^\ell )=p^\ell \)). 
Otherwise, write \(p^\alpha=(n,m,p^\ell )\) with \(0\leq \alpha<\ell \).
Put \(n=p^\alpha n'\) and \(m=p^\alpha m'\). 
Since \(S_\chi(n,m;p^\ell )=S_{\overline \chi}(m,n;p^\ell )\) we may assume without loss of generality that \(p\nmid n'\). 
Consequently
\begin{multline}\label{eq:kloosterman-gcd-reducing}
S_\chi(n,m;p^\ell )
\\=
\sum_{\substack{a\bmod p^\ell \\p\nmid a}} 
\chi(a) e\left(\frac{n'a+m'\overline a}{p^{\ell -\alpha}}\right)
=
\sum_{\substack{a\bmod p^{\ell -\alpha}\\p\nmid a}} e\left(\frac{n'a+m'\overline a}{p^{\ell -\alpha}}\right)
\sum_{h\bmod p^\alpha} \chi(a+hp^{\ell -\alpha}).
\end{multline}

If \(p^\gamma> p^{\ell -\alpha}\), the inner sum in \eqref{eq:kloosterman-gcd-reducing} vanishes (see \cite[Theorem 9.4]{montgomeryvaughan}) and we are done.
If alternatively \(p^\gamma\mid p^{\ell -\alpha}\), then the inner sum in \eqref{eq:kloosterman-gcd-reducing} is equal to \(p^\alpha\chi(a)\) and we obtain
\begin{equation}\label{eq:kloosterman-gcd-factoring}
S_\chi(n,m;p^\ell )=p^\alpha S_\chi(n',m';p^{\ell -\alpha}).
\end{equation}
Recall \(p\nmid n'\). 
If \(p^{\ell -\alpha}\mid m'\) then
\[S_\chi(n',m';p^{\ell-\alpha})=\overline{\chi}(n')\sum_{\substack{a\bmod p^{\ell-\alpha}\\ p\nmid a}}\chi(a)e\left(\frac{a}{p^{\ell-\alpha}}\right).\]
This is either a Gauss sum or a geometric series, accordingly as \(\gamma\geq 1\) or \(\gamma=0\). In any case, one has \(|S_\chi(n',m';p^{\ell-\alpha})|\leq p^{(\ell-\alpha)/2}\), cf. \cite[Propositions 9.4 \& 9.10]{MR3099744}, whence \eqref{eq:kloosterman-gcd-factoring} implies \(|S_\chi(n,m;p^\ell)|\leq p^\alpha |S_\chi(n',m';p^{\ell-\alpha})|\leq p^{(\ell+\alpha)/2}\), as claimed.
If on the other hand \(p^{\ell -\alpha}\nmid m'\), we apply \Cref{lem:twisted-kloosterman-bound-primes}, which gives \(|S_\chi(n',m';p^{\ell -\alpha})|\leq 2(2,p)^2 p^{(\ell -\alpha)/2}g(p^\gamma)\). 
The lemma now follows from this bound and \eqref{eq:kloosterman-gcd-factoring}.
\end{proof}
\section{Reduction to Second Moment Estimates}\label{sec:deductions}

In this section, we state the two key technical results required in the proofs of \Cref{thm:k-avg} and \Cref{thm:m-avg}, and show that these imply the Theorems. 
In both cases, we prove second moment bounds for the quantity
$$
\left(\|\widetilde{P}_{k,m,q,\chi}\|_2^2 - 1\right)^2=(-1)^k 4\pi^2\Delta_{k,q,\chi}(m,m)^2.
$$
(This equality follows from \eqref{eq:l2_norm_normalized}.)
For \Cref{thm:k-avg}, we average over the weight, and for \Cref{thm:m-avg}, we average over the index. 
To this end, with \(u\) being the smooth and compactly supported test function given in (\ref{def:smooth-u}), for parameters \(K,M\geq1\) we define the second moments
\begin{multline}\label{eq:sigmaK-def}
\sigma_{K}^2
\coloneqq \frac{2}{K}
\sum_{k\equiv \delta \bmod 2}u\left(\frac{k-1}{K}\right) \left(\|\widetilde{P}_{k,m,q,\chi}\|_2^2 - 1\right)^2\\
=(-1)^\delta \frac{8\pi^2}{K}\sum_{k\equiv \delta \bmod 2}u\left(\frac{k-1}{K}\right)\Delta_{k,q,\chi}(m,m)^2,
\end{multline}
and 
\begin{equation}\label{eq:sigmaM-def}
\sigma_M^2\coloneqq \frac 1M \sum_{m} u\left(\frac{m}{M}\right)\left(\|\widetilde P_{k,m,q,\chi}\|_2^2-1\right)^2=(-1)^k \frac {4\pi^2}M \sum_m u\left(\frac mM\right)\Delta_{k,q,\chi}(m,m)^2,
\end{equation}
where $\delta\in\SET{0,1}$ is the parity of $\chi$ (i.e. $\chi(-1) = (-1)^\delta$).

In \Cref{sec:weight-ave} we will prove the following second moment bound for \(\sigma_{K}^2\). 
\begin{proposition}\label{prop:seond-moment-weight}
Let $m$ and $q$ be positive integers, let $K\geq1$ and suppose $m\leq K^{100}$. 
Let $\delta\in\SET{0,1}$ and let $\chi$ be a character modulo $q$ of conductor $q'\mid q$ satisfying $\chi(-1) = (-1)^\delta$.
Then for any $\epsilon > 0$, one has
$$
\sigma_{K}^2
\ll_\epsilon
\frac{m^{1+\epsilon}(m,q)g(q')^2}{K^3 q^2}
+ K^{-1000}.
$$
\end{proposition}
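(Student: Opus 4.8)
The plan is to expand $\Delta_{k,q,\chi}(m,m)^2$ as a double sum over $c_1,c_2\geq 1$, substitute into \eqref{eq:sigmaK-def}, and then execute the sum over $k$ first. Explicitly, writing
$$
\Delta_{k,q,\chi}(m,m)^2 = \sum_{c_1,c_2\geq1}\frac{S_\chi(m,m;c_1q)S_\chi(m,m;c_2q)}{c_1c_2q^2}J_{k-1}\!\left(\frac{4\pi m}{c_1q}\right)J_{k-1}\!\left(\frac{4\pi m}{c_2q}\right),
$$
the weighted sum over $k\equiv\delta\bmod 2$ of the product of two Bessel functions of the same order $k-1$ is exactly the object handled by \Cref{lem:Neumann-addition-theorem} (after the harmless shift $k\mapsto k-1$, matching the parities). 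Thus $\frac{2}{K}\sum_k u((k-1)/K)J_{k-1}(x_1)J_{k-1}(x_2)$ becomes $\frac1K(I_1+(-1)^\delta I_2)$ with $x_i = 4\pi m/(c_iq)$, where $I_1,I_2$ are integrals of $\widehat u(Kt)$ against $J_0$ of $\sqrt{x_1^2+x_2^2\mp2x_1x_2\cos(2\pi t)}$. The point of the Neumann step is that it trades an oscillatory sum over $k$ for a $J_0$ whose argument is controlled, and $|J_0|\leq1$ combined with \eqref{eq:u-decay} gives $I_1,I_2\ll_j K(1+K|t|)^{-j}$ integrated, i.e. $O_j(1)$ trivially — but we need to do better to win the factor $K^{-3}$.

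The key mechanism for extracting $K^{-3}$ savings: for the diagonal-type terms where the Bessel argument $x_1,x_2$ are both small compared to $K$ — which, given $m\cdot(m,q)\leq K^{3-200\epsilon}q^2/g(q')^2$ and the Weil bound \Cref{lem:twisted_kloosterman_bound}, is the dominant regime once $c_1,c_2$ are not too small — one expands $J_0(2\sqrt{A+B})$ via \Cref{lem:bessel0_adition_theorem}, writing $A = \tfrac14(x_1^2+x_2^2)$ (or a symmetric variant) and $B$ the $\cos(2\pi t)$-dependent part, so that $J_0$ becomes $\sum_{n<N}\frac{(-1)^n}{n!}(B/\sqrt A)^nJ_n(2\sqrt A)$ up to a small error. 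Each term $\int_\RR \widehat u(Kt)\,(\cos 2\pi t)^n\,dt$ is, after expanding $(\cos 2\pi t)^n$ into exponentials $e(jt)$, a sum of values $\widehat u(Kj)\ll_A (K|j|)^{-A}$ for $j\neq0$ together with the $j=0$ contribution — and crucially the $j=0$ contribution vanishes for odd $n$ by symmetry, while for even $n\geq2$ one uses \eqref{eq:fourier-derivative} (all moments of $\widehat u$ vanish) to kill the constant term. Carefully bookkeeping these cancellations produces, after multiplying by the prefactor $K$, a net saving of $K^{-2}$ from the $n=0$ and higher terms; the remaining $K^{-1}$ comes from the overall $\tfrac1K$ in front, giving $K^{-3}$. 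The small-$c$ terms ($c_iq \ll \sqrt m$, where the Bessel arguments are large and the expansion fails) are estimated separately using \Cref{lem:bessel-bounds}\ref{besiii} or \ref{besiv} together with the rapid decay of $\widehat u$; their total contribution must be shown to be absorbed into the $K^{-1000}$ error, which is where the hypothesis $m\leq K^{100}$ enters to bound the number of such $c$ and the sizes of the Kloosterman sums polynomially.

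Assembling the bound: after the Neumann and Bessel-expansion steps, the surviving main term is a double sum over $c_1,c_2$ of $\frac{|S_\chi(m,m;c_1q)||S_\chi(m,m;c_2q)|}{c_1c_2q^2}$ times a quantity of size $\ll (m/(c_1c_2q^2))\cdot K^{-2}$ (one power of $x_1x_2 \sim m^2/(c_1c_2q^2)$ from the leading $B/\sqrt A$ factor, balanced against $A\sim m^2/(c_iq)^2$), and one inserts \Cref{lem:twisted_kloosterman_bound}: $|S_\chi(m,m;cq)|\leq 4\divis(cq)(m,cq)^{1/2}(cq)^{1/2}g(q')$. The resulting sum over $c_1,c_2$ converges (the $c_i^{-3/2+\epsilon}$ tails are summable), and the $(m,cq)^{1/2}$ factors contribute $(m,q)^{1/2}$ each up to $m^\epsilon$ from divisor functions, giving overall $\ll_\epsilon m^{1+\epsilon}(m,q)g(q')^2/(K^3q^2)$. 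I expect the main obstacle to be the second paragraph's cancellation analysis — tracking precisely which terms in the \Cref{lem:bessel0_adition_theorem} expansion vanish (via parity and via \eqref{eq:fourier-derivative}) versus which merely decay, and verifying uniformly in $c_1,c_2,t$ that the expansion error $(|B|/\sqrt A)^N$ is genuinely negligible; handling the boundary case $x_1\asymp x_2\asymp K$ (transition region of $J_{k-1}$), where neither \Cref{lem:bessel0_adition_theorem} nor the large-argument asymptotics are clean, will likely require splitting $c_i$ into a few more subranges and applying \Cref{lem:bessel-bounds}\ref{besi}–\ref{besii} to gain exponential savings there.
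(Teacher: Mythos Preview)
Your proposal identifies the right toolkit---Neumann's addition theorem, the \(J_0\)-expansion of \Cref{lem:bessel0_adition_theorem}, the moment vanishing \eqref{eq:fourier-derivative}, and the Weil bound---but the structural mechanism you describe will not produce the \(K^{-3}\) saving, for two linked reasons.

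First, the choice \(A=\tfrac14(x_1^2+x_2^2)\) is fatal: with this \(A\) one has \(B=\mp\tfrac12 x_1x_2\cos(2\pi t)\), so \(|B|/\sqrt A\asymp x_1x_2/(x_1+x_2)\). After truncation to \(c_i\leq 200m/(Kq)\) (it is the \emph{large}-\(c\) terms that are negligible via \Cref{lem:bessel-bounds}\ref{besi}, not the small-\(c\) ones as you suggest), one has \(x_i\gtrsim K\), hence \(|B|/\sqrt A\gtrsim K\) and the error \((|B|/\sqrt A)^N\) in \Cref{lem:bessel0_adition_theorem} explodes. The paper instead takes \(A=\tfrac14(x_1\pm x_2)^2\), so that \(B\propto x_1x_2(\cos(2\pi t)-1)\). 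Now \(B\) vanishes at \(t=0\), and on the effective support \(|t|\ll 1/K\) of \(\widehat u(Kt)\) one has \(|B|/\sqrt A\asymp x_1x_2 t^2/|x_1\pm x_2|\asymp m/(K^2q|c_1\pm c_2|)\). The main terms in the expansion become genuine polynomials in \(t\), and \emph{those} are the terms killed by \eqref{eq:fourier-derivative}; the upshot (\Cref{lem:Ibounds}) is
\[
\mathcal I_{m,K,q}^\pm(c_1,c_2)\ll_{N,N'}\Big(\frac{m}{K^2q|c_1\pm c_2|}\Big)^{N'}+K^{-N}.
\]

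Second---and this is the point your proposal misses entirely---the integral carries no uniform \(K^{-2}\) bound. On the diagonal \(c_1=c_2\) one only has \(\mathcal I^-\ll K^{-1}\), so your claimed ``main term'' \((m/(c_1c_2q^2))K^{-2}\) is false there (for \(m\gg K\) it undercounts by a factor \(m/K\)). The actual source of the extra \(K^{-2}\) is a \emph{near-diagonal restriction}: the display above forces \(\mathcal I^\pm\) to be negligibly small unless \(|c_1-c_2|\leq m/(K^{2-\epsilon}q)\). The paper therefore splits into \(T_1\) (off-diagonal, negligible) and \(T_2\) (near-diagonal), and on \(T_2\) applies only the trivial bound \(\mathcal I^\pm\ll K^{-1}\). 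The \(K^{-3}\) then comes from counting: \(c_1\) ranges over \(\ll m/(Kq)\) values, \(c_2\) over \(\ll m/(K^{2}q)\) values near \(c_1\), and the Weil bound with Cauchy--Schwarz handles the remaining sum to give \(m^{1+\epsilon}(m,q)g(q')^2/(K^3q^2)\). Your convergent free double sum over all \((c_1,c_2)\) with \(c_i^{-3/2}\) tails is not the right endgame.
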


\begin{proof}[Proof of \Cref{thm:k-avg}, assuming \Cref{prop:seond-moment-weight}]
Fix \(\epsilon>0\). One has 
\begin{multline}\label{eq:setKsizes}
\#\left\{K<k<2K, k\equiv\delta \bmod{2} : \left|\|\widetilde{P}_{k,m,q,\chi}(z)\|_2^2 - 1\right|\leq K^{-\epsilon}\right\}
=\frac K2 +O(1)
\\-
\#\left\{K+1<k<2K+1, k\equiv \delta\bmod{2} : \left|\|\widetilde{P}_{k,m,q,\chi}(z)\|_2^2 - 1\right|> K^{-\epsilon}\right\}.
\end{multline}
Since \(u\) is non-negative and \(u((k-1)/K)=1\) for \(K+1<k<2K+1\), we bound
\begin{multline}\label{eq:setKbound}
\#\left\{K+1<k<2K+1, k\equiv \delta\bmod 2:\left|\|\widetilde{P}_{k,m,q,\chi}(z)\|_2^2-1\right|> K^{-\epsilon}\right\}\\
\leq \sum_{k} 
u\left(\frac{k-1}{K}\right) 
\left|\|\widetilde{P}_{k,m,q,\chi}(z)\|_2^2-1\right|^2 
K^{2\epsilon}
\ll K^{1+2\epsilon}
\sigma_{K}^2.
\end{multline}
Since we assume $m(m,q) \leq  K^{3-200\epsilon}q^{2}/g(q')^2$ and $m\leq K^{100}$, \Cref{prop:seond-moment-weight} now gives
\begin{equation*}
K^{1 + 2\epsilon}\sigma_{K}^2 \ll_\epsilon
K^{1+2\epsilon}\left( \frac{m^{1+\epsilon}(m,q)g(q')^2}{K^3 q^2} + K^{-1000}\right)
\ll_\epsilon K\left(K^{-98\epsilon} + K^{-1000 + 2\epsilon}\right).
\end{equation*}
Combining this with \eqref{eq:setKbound} and \eqref{eq:setKsizes}, the result follows. 
\end{proof}

Next, in \Cref{sec:index_ave}, we prove the following second moment bound for \(\sigma_M^2\).

\begin{proposition}\label{prop:second-moment-index}
Let \(k\) and \(q\) be positive integers, let \(M\geq1\) and suppose \(M\leq k^{1000}\). Let \(\chi\) be a character modulo \(q\) satisfying \(\chi(-1)=(-1)^k\). 
Then for any \(\epsilon>0\), one has
\begin{equation*}
\sigma_M^2\ll_\epsilon \frac{M^{1/2+\epsilon}f(q)^{1/2}}{k^{3/2}q}+\frac{M^{1+\epsilon}f(q)}{k^{5/2}q^2}+k^{-1000}.
\end{equation*}
\end{proposition}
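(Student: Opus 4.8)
The plan is to substitute the series \eqref{def:delta} for each factor of $\Delta_{k,q,\chi}(m,m)$ into \eqref{eq:sigmaM-def}, expand the square, and exploit cancellation in the resulting sum over the index $m$. First I would record that $\overline{S_\chi(m,m;cq)}=(-1)^kS_\chi(m,m;cq)$ (substitute $a\mapsto -a$), so that the sign $(-1)^k$ in \eqref{eq:sigmaM-def} cancels and
\[
\sigma_M^2=\frac{4\pi^2}{M}\sum_{c_1,c_2\geq1}\frac{1}{c_1c_2q^2}\sum_{m}u\!\Big(\frac mM\Big)S_\chi(m,m;c_1q)\overline{S_\chi(m,m;c_2q)}\,J_{k-1}\!\Big(\frac{4\pi m}{c_1q}\Big)J_{k-1}\!\Big(\frac{4\pi m}{c_2q}\Big).
\]
Next I would truncate the $c_i$-sums to $c_i\leq C$ with $C\asymp M/(kq)$: for $m\leq3M$ and $c_i$ a little larger than $12\pi M/((k-1)q)$ one has $\tfrac{4\pi m}{c_iq}<(k-1)-(k-1)^{1/3+\delta}$, so $J_{k-1}(\tfrac{4\pi m}{c_iq})\ll\exp(-(k-1)^\delta)$ by \Cref{lem:bessel-bounds}\ref{besii}, while for $c_i\gg M/q$ the Taylor expansion \eqref{def:J-bessel-Taylor} gives decay of order $(m/(c_iq))^{k-1}$, making the tail of the $c_i$-sum convergent; together with \Cref{lem:twisted_kloosterman_bound} and $M\leq k^{1000}$ these tails contribute $O(k^{-1000})$ (and if $C<1$ all of $\sigma_M^2$ is of this size). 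On the short sliver where $\tfrac{4\pi m}{c_iq}$ lies within $(k-1)^{1/3+\delta}$ of $k-1$ I would use $|J_{k-1}|\ll k^{-1/3}$ from \Cref{lem:bessel-bounds}\ref{besiii}; as the $c_i$ in question form a set of size $\ll Ck^{-2/3+\delta}+1$, these ``transition'' terms can be handled with absolute values and a Cauchy--Schwarz splitting against the rest, and are dominated by the second error term.

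On the bulk range (Bessel arguments $\geq(k-1)+(k-1)^{1/3+\delta}$) I would open both Kloosterman sums,
\[
S_\chi(m,m;c_1q)\overline{S_\chi(m,m;c_2q)}=\sum_{\substack{a_1\bmod c_1q\\ a_2\bmod c_2q}}\chi(a_1)\overline{\chi(a_2)}\,e\!\Big(m\Big(\tfrac{a_1^\dagger}{c_1q}-\tfrac{a_2^\dagger}{c_2q}\Big)\Big),
\]
and study the inner sum $\mathcal T=\sum_m u(m/M)e(m\theta)J_{k-1}(\tfrac{4\pi m}{c_1q})J_{k-1}(\tfrac{4\pi m}{c_2q})$ with $\theta=\tfrac{a_1^\dagger}{c_1q}-\tfrac{a_2^\dagger}{c_2q}$, via Poisson summation in $m$ together with the uniform asymptotics for $J_{k-1}$ from \cite[Lemma~A.1]{paper1}. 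This writes $\mathcal T$ as a bounded number of oscillatory integrals $\int A(x)e(\Psi(x))\,\mathrm{d}x$ whose phase has derivative $\Psi'(x)=(\theta-n)\pm\tfrac2{c_1q}\sqrt{1-(k-1)^2z_1^{-2}}\pm\tfrac2{c_2q}\sqrt{1-(k-1)^2z_2^{-2}}$, $z_i=\tfrac{4\pi x}{c_iq}$, and amplitude $A(x)\asymp(z_1^2-(k-1)^2)^{-1/4}(z_2^2-(k-1)^2)^{-1/4}$. Since each square root lies in $(0,1)$, repeated integration by parts shows $\mathcal T\ll_A k^{-A}$ unless $\theta$ lies within $O(\tfrac1{c_1q}+\tfrac1{c_2q})$ of an integer $n$ and the signs are such that $\Psi'$ can vanish; only these ``near-diagonal'' pairs $(a_1,a_2)$ survive. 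For a survivor one computes $\Psi''(x)\asymp k^2q(c_1\pm c_2)/M^3$, so stationary phase gives $\mathcal T\ll\frac{(c_1c_2)^{1/2}q^{1/2}M^{1/2}}{k\,|c_1\pm c_2|^{1/2}}$, whereas in the genuine diagonal case $c_1=c_2$, $a_1^\dagger\equiv a_2^\dagger\pmod{c_1q}$ the phase is constant and $\mathcal T\asymp c_1q$.

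The near-diagonal condition unwinds to $a_1^\dagger c_2\equiv a_2^\dagger c_1+r\pmod{c_1c_2q}$ for some $r$ with $|r|\ll c_1+c_2$. For each admissible $r$, the number of such $(a_1,a_2)$ is controlled by the number of solutions, modulo $q$ and the parts of $c_1,c_2$ prime to it, of congruences of the shape $x+\overline x\equiv v$; in the worst case this is $\ll(c_1c_2q)^{\epsilon}f(q)^{1/2}$, which is precisely where $f(q)$ enters, since $x+\overline x\equiv v\pmod q$ can have $\asymp q^{\epsilon}f(q)^{1/2}$ solutions. Feeding the stationary-phase bounds for $\mathcal T$ into the outer sum and summing over $c_1,c_2\leq C\asymp M/(kq)$ then produces the diagonal contribution together with the two off-diagonal main terms $\frac{M^{1/2+\epsilon}f(q)^{1/2}}{k^{3/2}q}$ and $\frac{M^{1+\epsilon}f(q)}{k^{5/2}q^2}$, all remaining contributions being $O(k^{-1000})$. (That no factor $(m,q)$ appears here, unlike in \Cref{prop:seond-moment-weight}, is because a gcd factor of the form $(m,q^\infty)$ averages to $O(M^\epsilon)$ over $m\sim M$.)

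I expect the main obstacle to be doing these last steps \emph{uniformly}: carrying the stationary-phase analysis of the two-Bessel integral $\mathcal T$ through the turning point $z\approx k-1$, where the asymptotics of \cite[Lemma~A.1]{paper1} degrade and have to be interpolated with the bound $|J_{k-1}|\ll k^{-1/3}$, and, in parallel, organising the solution count modulo the composite modulus $c_1c_2q$ — keeping track of common factors among $c_1$, $c_2$ and $q$ — so as to extract exactly the power $f(q)^{1/2}$ per variable (and $f(q)$ in the full double sum). The harmonic-analytic part and the arithmetic part each have to be executed with bounds that are completely explicit in all of $M,k,q,c_1,c_2$, which is the source of most of the bookkeeping.
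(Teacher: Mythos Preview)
Your opening moves coincide with the paper's: truncate $c_1,c_2\leq 200M/(kq)$ via Bessel decay, open the Kloosterman sums, and apply Poisson in $m$. From that point on the two arguments diverge.

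The paper does \emph{not} do stationary phase on the dual integrals $\mathcal I_{M,k}(n;qc_1,qc_2)$. Instead, after packaging the arithmetic into $\mathcal N_\chi(n;qc_1,qc_2)$ and truncating the $n$-sum by integration by parts, it applies Cauchy--Schwarz to $\sum_n |\mathcal I_{M,k}|^2$. Expanding the square turns this into a double integral against a geometric series in $n$; changing variables separates the Bessel factors and reduces everything to the one-variable moments $\int_{M/2}^{3M}|J_{k-1}(4\pi t/(qc_i))|^4\,\mathrm dt$, which are bounded directly from \Cref{lem:bessel-bounds}. This sidesteps the turning-point analysis you flag as the main obstacle: no asymptotic expansion of $J_{k-1}$ is ever needed, only the pointwise bounds, and the transition zone contributes harmlessly to the $L^4$-integral.

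Your stationary-phase route should also succeed, but two points in the sketch need sharpening. First, your arithmetic count is understated: for fixed $r$ the number of pairs $(a_1,a_2)$ is bounded by $q(c_1,c_2)\,\sigma_0(qc_1)\sigma_0(qc_2)\,f(qc_1)^{1/2}f(qc_2)^{1/2}$, not $(c_1c_2q)^\epsilon f(q)^{1/2}$ --- the factors $f(qc_i)^{1/2}$ genuinely appear and their sum over $c_i\leq 200M/(kq)$ requires a separate argument (the paper's \Cref{lem:S-sums-bounds}) to extract $f(q)^{1/2}$ times the expected power of $M/(kq)$. Second, in your $\Psi''$ computation the coefficient degenerates when the Bessel argument approaches $k-1$, so the bound $\mathcal T\ll (c_1c_2)^{1/2}q^{1/2}M^{1/2}/(k|c_1\pm c_2|^{1/2})$ is not uniform across the full range of $c_1,c_2$; patching this with the $k^{-1/3}$ bound on the transition sliver, as you suggest, is workable but fiddly. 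The paper's Cauchy--Schwarz device trades this delicate uniform oscillatory analysis for a short $L^4$ estimate, at the cost of a slightly less transparent connection between the two error terms and the diagonal/off-diagonal structure you identify.
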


\begin{proof}[Proof of \Cref{thm:m-avg}, assuming \Cref{prop:second-moment-index}]
Fix \(\epsilon>0\). One has
\begin{multline}\label{eq:setMsizes}
\#\left\{M<m<2M:\left|\|\widetilde{P}_{k,m,q,\chi}(z)_2^2-1\|\right|\leq k^{-\epsilon}\right\}\\
=M+O(1)-\#\left\{M<m<2M:\left|\|\widetilde{P}_{k,m,q,\chi}(z)\|_2^2-1\right|> k^{-\epsilon}\right\}.
\end{multline}
Since \(u\) is non-negative and \(u(m/M)=1\) for \(M<m<2M\), we bound
\begin{multline}\label{eq:setMbound}
\#\left\{M<m<2M:\left|\|\widetilde{P}_{k,m,q,\chi}(z)\|_2^2-1\right|> k^{-\epsilon}\right\}\\
\leq 
\sum_{m} 
u\left(\frac{m}{M}\right) 
\left|\|\widetilde{P}_{k,m,q,\chi}(z)\|_2^2-1\right|^2  k^{2\epsilon}
\ll M 
\sigma_M^2 k^{2\epsilon}.
\end{multline}
Since we assume \(M\leq k^{5/2-500\epsilon}q^{2}/f(q)\) and \(q\leq k^{100}\), it follows \(M\leq k^{5/2}q^2\leq k^{405/2}\). 
\Cref{prop:second-moment-index} is therefore applicable, and gives 
\begin{equation}\label{eq:sigmaMbound-final}
M \sigma_M^2 k^{2\epsilon}
\ll_\epsilon 
\left(
k^{-1/4-250\epsilon}+k^{-500\epsilon}
\right)
M^{1+\epsilon} k^{2\epsilon}+Mk^{-1000+2\epsilon}.
\end{equation}
Note \(M\leq k^{405/2}\) implies \(k^{-498\epsilon}\leq k^{-405\epsilon}\leq M^{-2\epsilon}\) and \(k^{-1000+2\epsilon}\leq k^{-810}\leq M^{-4}\) (for sufficiently small \(\epsilon\)).
So \eqref{eq:sigmaMbound-final} shows \(M\sigma_M^2 k^{2\epsilon}\ll_\epsilon M^{1-\epsilon}\).
Combining this with (\ref{eq:setMbound}) and replacing in (\ref{eq:setMsizes}), the result follows.
\end{proof}
\section{Weight Average: Proof of \texorpdfstring{\Cref{thm:k-avg}}{Theorem 1.1}}\label{sec:weight-ave}

In this section we prove \Cref{prop:seond-moment-weight}, which, as shown in \cref{sec:deductions}, implies \Cref{thm:k-avg}.
Throughout this section, we let \(m\) and \(q\) be positive integers, let \(K\geq1\), and suppose \(m\leq K^{100}\). 
Let $\chi$ be a character modulo $q$ of conductor $q'\mid q$ and parity $\delta\in\SET{0,1}$, i.e. $\chi(-1) = (-1)^\delta$.
The second moment \(\sigma_K^2\) (defined in \eqref{eq:sigmaK-def}) depends upon these \(m,q, K\) and \(\chi\). 
All subsequent lemmas in this section are uniform in these parameters.
Recall from the definition \eqref{eq:sigmaK-def} and the definition of \(\Delta_{k,q,\chi}\) given in \eqref{def:delta} that
\begin{align} \label{eq:sigmaK2-non-truncated}
\sigma_K^2 
&=(-1)^\delta \frac {8\pi^2}{K}
\sum_{k\equiv \delta \bmod 2} 
u\left(\frac{k-1}{K}\right)\Delta_{k,q,\chi}(m,m)^2
\\ \nonumber
&=
(-1)^\delta
\frac{8\pi^2}{K}
\sum_{k\equiv\delta\bmod 2}u\left(\frac{k-1}{K}\right)
\\ \nonumber
& \quad \times
\sum_{c_1,c_2\geq 1}
\frac{S_\chi(m,m,qc_1)}{qc_1}
\frac{S_\chi(m,m,qc_2)}{qc_2}
J_{k-1}\left(\frac{4\pi m}{qc_1}\right)
J_{k-1}\left(\frac{4\pi m}{qc_2}\right). 
\end{align}

Our strategy to bound \(\sigma_K^2\) is to interchange summation and apply Neumann's addition theorem (\Cref{lem:Neumann-addition-theorem}) to transform the sums of Bessel functions. 
This gives an expression for \(\sigma_K^2\) in terms of Kloosterman sums and oscillatory integrals. 
Applying the Weil bound for the Kloosterman sums and trivially bounding the oscillatory integrals now recovers the bound \(\sigma_K^2\ll_\epsilon m(m,q)g(q')^2 (mKq)^\epsilon/(Kq)^2\), which is essentially the same bound one obtains by applying the Weil bound for the Kloosterman sums and sharp bounds for the Bessel functions for each $k\sim K$ individually. 
However, with more careful analysis of the oscillatory integrals arising, we capture additional cancellation that leads to the improved bound of \Cref{prop:seond-moment-weight}.

The first step is to transform the sums \eqref{eq:sigmaK2-non-truncated}. 
This is done in the following lemma. 

\begin{lemma}\label{lem:sigmaK-transformed}
One has 
\begin{multline*}
\sigma_K^2 
=
(-1)^\delta 8\pi^2
\sum_{c_1,c_2 \leq \frac{200m}{Kq}}
\frac{S_\chi(m,m,qc_1)}{qc_1}
\frac{S_\chi(m,m,qc_2)}{qc_2}
\\ 
\times
\left( \mathcal I_{m,K,q}^-(c_1,c_2) - (-1)^\delta \mathcal I_{m,K,q}^+(c_1,c_2)\right)
+ O\left(e^{-K/2}\right), 
\end{multline*}
where 
\begin{equation*}
\mathcal I_{m,K,q}^\pm(c_1,c_2)
=
\int_\RR\widehat{u}(Kt)J_0\left(4\pi m\sqrt{\frac{1}{q^2c_1^2} + \frac{1}{q^2c_2^2} \pm 2\frac{\cos(2\pi t)}{q^2c_1c_2}}\right)\mathrm{d}t.
\end{equation*}
\end{lemma}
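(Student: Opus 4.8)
The plan is to start from the expression \eqref{eq:sigmaK2-non-truncated} and justify interchanging the (absolutely convergent) sums over $c_1,c_2$ with the sum over $k$, so that the weight average becomes an inner sum of the form $\sum_{k\equiv\delta\bmod 2}u((k-1)/K)J_{k-1}(x_1)J_{k-1}(x_2)$ with $x_i=4\pi m/(qc_i)$. Reindexing $k-1=n$, this is precisely the shape to which \Cref{lem:Neumann-addition-theorem} applies (with the roles of $\delta$ and $\delta-1$ swapped by the shift, which is why the $(-1)^\delta$ signs come out as stated). Applying that lemma replaces the Bessel sum by $\tfrac12(I_1+(-1)^{\delta}I_2)$, i.e. by $\mathcal I^-_{m,K,q}(c_1,c_2)\mp(-1)^\delta\mathcal I^+_{m,K,q}(c_1,c_2)$ up to the normalizing factor, giving the claimed main term.

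The second ingredient is the truncation of the $c_1,c_2$ sums to $c_i\leq 200m/(Kq)$. Here I would argue that when $\min(c_1,c_2)$ is large — specifically once $4\pi m/(qc_i)$ drops below roughly $(k-1)/4$, which happens for all $k>K$ when $c_i\gtrsim m/(Kq)$ — the Bessel factor $J_{k-1}(4\pi m/(qc_i))$ is exponentially small in $k$ by \Cref{lem:bessel-bounds}\ref{besi} (or \ref{besii}). Combined with the trivial bounds $|J_{k-1}|\leq 1$ for the other factor, the Weil bound $|S_\chi(m,m;qc_i)|\ll (qc_i)^{1/2+\epsilon}(m,qc_i)^{1/2}g(q')$ from \Cref{lem:twisted_kloosterman_bound}, and the fact that $\sum 1/(qc_i)\cdot(qc_i)^{-1/2+\epsilon}$ converges, the tail contributes $O(e^{-K/2})$ (after absorbing the polynomially-bounded number of surviving terms using $m\leq K^{100}$). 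One must be a little careful: the cutoff is applied to each of $c_1,c_2$ separately while the error term collects the contribution where \emph{either} exceeds the threshold, and one needs the exponential decay to beat both the divisor-type growth in $c_i$ and any powers of $m$; since $m$ is at most $K^{100}$ this is routine. Finally, one notes that after truncation each retained integral $\mathcal I^\pm_{m,K,q}(c_1,c_2)$ differs from the corresponding truncated Bessel sum only by the exponentially small tail of $\widehat u$ (using \eqref{eq:u-decay}), so these replacements are harmless.

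The main obstacle — though it is more bookkeeping than genuine difficulty — is to make the interchange-of-summation and the truncation rigorous \emph{simultaneously}, i.e. to be sure that the error terms introduced by cutting the $c_i$ sums short are genuinely $O(e^{-K/2})$ and not merely $O(K^{-A})$, which forces one to use the \emph{exponential} Bessel bounds of \Cref{lem:bessel-bounds}\ref{besi}–\ref{besii} rather than just the uniform bound \ref{besiii}, and to track that the region $4\pi m/(qc)\leq (k-1)/4$ for the surviving weights $k\in(K,2K)$ really does correspond to $c\geq c_0 m/(Kq)$ for the stated constant. I would also double-check the index shift $k\mapsto k-1$ interacts correctly with the parity condition $k\equiv\delta\bmod 2$, so that after applying \Cref{lem:Neumann-addition-theorem} with parameter $\delta'=1-\delta$ (or $\delta$, depending on conventions) the signs $(-1)^\delta$ in front of $\mathcal I^+$ and the overall $(-1)^\delta 8\pi^2$ match. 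Everything else — absolute convergence, the change of order of integration and summation in the definition of $\mathcal I^\pm$, and the passage from the full Bessel sum to the integral — follows from the absolute convergence already exploited in the proof of \Cref{lem:Neumann-addition-theorem} together with \eqref{eq:u-decay} and the Weil bound.
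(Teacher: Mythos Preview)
Your proposal is correct and follows essentially the same route as the paper: truncate the $c_1,c_2$ sums using the exponential Bessel bound \Cref{lem:bessel-bounds}\ref{besi} (together with $|J_{k-1}|\leq 1$ and the assumption $m\leq K^{100}$), interchange summation, and apply \Cref{lem:Neumann-addition-theorem} after the reindexing $n=k-1$. The paper truncates \emph{before} interchanging (so the swap is over finite sums and needs no justification) and uses only the trivial Kloosterman bound; your closing remark about the tail of $\widehat u$ is unnecessary, since \Cref{lem:Neumann-addition-theorem} is already an exact identity for the weighted $k$-sum and no further approximation enters.
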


\begin{proof}
We begin by truncating the sum over $c_1,c_2$ in \eqref{eq:sigmaK2-non-truncated}.
Note that if $4\pi m / (qc) \leq k/4$ then part \ref{besi} of \Cref{lem:bessel-bounds} gives 
$$
J_{k-1}\left(\frac{4\pi m}{qc}\right)
\ll 
\left(\frac{m}{qc}\right)^2 e^{-14k/13}.
$$
Consequently, since we assume $m\leq K^{100}$, and since $k\geq K/2$ from the support of $u$, the contribution of $c_1,c_2 \geq 200 m/(Kq)$ to \eqref{eq:sigmaK2-non-truncated} is seen to be $O(e^{-K/2})$, where we used the trivial bounds for Kloosterman sums and the bound \(|J_{k-1}(4\pi m/(qc))|\leq 1\) for any \(c\leq 200m/(Kq)\) (see \eqref{eq:bessel-intrep}). 
Truncating the sums in \eqref{eq:sigmaK2-non-truncated} and interchanging the order of summation, we thus obtain
\begin{multline}\label{eq:sigmaK2-truncated} 
\sigma_K^2 
=(-1)^\delta
\frac{8\pi^2}{K}
\sum_{c_1,c_2\leq \frac{200m}{Kq}}
\frac{S_\chi(m,m,qc_1)}{qc_1}
\frac{S_\chi(m,m,qc_2)}{qc_2}
\\ \times
\sum_{k\equiv\delta\bmod 2}u\left(\frac{k-1}{K}\right)
J_{k-1}\left(\frac{4\pi m}{qc_1}\right)
J_{k-1}\left(\frac{4\pi m}{qc_2}\right)
+ O\left(e^{-K/2}\right).
\end{multline}
The lemma now follows at once upon applying \Cref{lem:Neumann-addition-theorem} to transform the inner sum over $k$ in \eqref{eq:sigmaK2-truncated}.
\end{proof}

Our next goal is to bound the integrals $\mathcal I_{m,K,q}^\pm(c_1,c_2)$ appearing in \Cref{lem:sigmaK-transformed}, which is done in the following lemma.

\begin{lemma}\label{lem:Ibounds}
For any positive integers \(c_1\) and \(c_2\), we have the following bounds for \(\mathcal I_{m,K,q}^\pm (c_1,c_2)\).
Firstly, 
\begin{equation}\label{eq:I-trivial-bound}
\mathcal I_{m,K,q}^\pm(c_1,c_2)
\ll K^{-1}.
\end{equation}
Secondly, provided \(c_1\neq c_2\), for any positive integers \(N\) and \(N'\) we have
\begin{equation}\label{eq:I_bound}
\mathcal I_{m,K,q}^\pm(c_1,c_2) \ll_{N,N'} \left(\frac{m}{q|c_1 \pm c_2| K^2}\right)^{N'} + 
K^{-N}.
\end{equation}
\end{lemma}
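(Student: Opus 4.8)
The plan is to analyse the oscillatory integral
\[
\mathcal I_{m,K,q}^\pm(c_1,c_2)
=
\int_\RR\widehat{u}(Kt)\,J_0\!\Big(\phi^\pm(t)\Big)\,\mathrm dt,
\qquad
\phi^\pm(t) \coloneqq 4\pi m\sqrt{\tfrac{1}{q^2c_1^2} + \tfrac{1}{q^2c_2^2} \pm \tfrac{2\cos(2\pi t)}{q^2c_1c_2}},
\]
by exploiting the rapid decay of $\widehat u$ on the scale $|t|\asymp 1/K$ together with the smoothness and boundedness of $z\mapsto J_0(z)$. The first bound \eqref{eq:I-trivial-bound} is immediate: since $|J_0(z)|\leq 1$ for all $z$ by \eqref{eq:bessel-intrep}, one has $|\mathcal I_{m,K,q}^\pm(c_1,c_2)|\leq \int_\RR|\widehat u(Kt)|\,\mathrm dt \ll K^{-1}$ by \eqref{eq:fourier-bound} with $j=0$.

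For the second bound \eqref{eq:I_bound}, the idea is to Taylor-expand $z\mapsto J_0(\phi^\pm(t))$ around $t=0$ and integrate term by term against $\widehat u(Kt)$, using the vanishing of the moments $\int_\RR\widehat u(Kt)t^j\,\mathrm dt$ only implicitly through the size bound \eqref{eq:fourier-bound}: each extra power of $t$ costs a factor $K^{-1}$. Concretely, write $F^\pm(t) = J_0(\phi^\pm(t))$; since $J_0$ is smooth and even and $\cos(2\pi t)$ is smooth, $F^\pm$ is a smooth even function of $t$ with $F^\pm(t) = \sum_{0\leq \ell < L} a_\ell^\pm t^{2\ell} + O(|t|^{2L} \max_{[0,|t|]}|(F^\pm)^{(2L)}|)$. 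The point is to track how large the derivatives $(F^\pm)^{(j)}(t)$ can be for $|t|\lesssim 1/K$ in terms of the ratio $\rho^\pm \coloneqq m/(q|c_1\mp c_2|)$ — note that when $t$ is near $0$, $\phi^-(t)^2$ is close to $4\pi^2 m^2 (c_1-c_2)^2/(q^2c_1^2c_2^2)$, which is small exactly when $c_1,c_2$ are close, so the relevant parameter governing the oscillation is $\phi^\pm(0) \asymp m|c_1\mp c_2|/(q c_1 c_2)$ and its $t$-derivatives. Each differentiation of $\phi^\pm(t)^2$ in $t$ produces a bounded factor (from $\frac{d}{dt}\cos(2\pi t)$) times $m/(q^2 c_1 c_2)$, and one then converts derivatives of $\phi^\pm$ itself and of $J_0\circ\phi^\pm$ via the chain rule and Faà di Bruno, using $|J_0^{(r)}|\ll 1$. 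Carrying this through, the $2\ell$-th Taylor coefficient $a_\ell^\pm$ and the remainder are controlled by powers of $m/(q c_1 c_2)\cdot(\text{something})$; after integrating against $\widehat u(Kt)$ and using \eqref{eq:fourier-bound}, the constant term $a_0^\pm = J_0(\phi^\pm(0))$ contributes $\ll K^{-1}$ times $\min(1,(\phi^\pm(0))^{-1/2})$ by the large-argument Bessel bound, but more relevantly the whole expansion can be grouped so that either one extracts $N'$ powers of the small quantity $\rho^\pm K^{-2}$-type factor, or one truncates the Taylor expansion at order $N$ picking up $K^{-N}$. The cleanest route is: if $m/(q|c_1\mp c_2|K^2)\geq 1$ the claimed bound exceeds $K^{-1}$ and follows from \eqref{eq:I-trivial-bound}; otherwise, integrate by parts / Taylor-expand so that the main term after $N'$ steps is $O\big((m/(q|c_1\mp c_2|K^2))^{N'}\big)$ and the error from truncation is $O(K^{-N})$.

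The main obstacle will be bookkeeping the chain-rule estimates for the derivatives of $t\mapsto J_0(\phi^\pm(t))$ uniformly in $c_1,c_2$, in particular checking that the "resonant" combination that appears is genuinely $m/(q|c_1\mp c_2|)$ rather than $m/(qc_1c_2)$ or $m(c_1\mp c_2)/(qc_1c_2)$ — this requires noticing that under the square root the cross term $\pm 2\cos(2\pi t)/(q^2c_1c_2)$ combines with the other two terms so that $\phi^\pm(t)^2 = \frac{(2\pi m)^2}{q^2}\big(\frac{1}{c_1}\mp\frac1{c_2}\big)^2 \pm \frac{(2\pi m)^2}{q^2}\cdot\frac{2(1-\cos 2\pi t)}{c_1c_2}$ (with signs chosen appropriately), i.e. the $t$-dependence enters only through $1-\cos(2\pi t) = O(t^2)$, so on $|t|\lesssim 1/K$ the argument stays within $O(m^2/(q^2c_1c_2 K^2))$ of the fixed value $(2\pi m/q)^2(1/c_1\mp 1/c_2)^2$. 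Expanding $J_0$ around that fixed point and using $|J_0^{(r)}|\ll 1$ then immediately yields that the $j$-th term carries $(m^2/(q^2c_1c_2K^2))^j \ll (m/(q|c_1\mp c_2|K^2))^j$ after using $c_1c_2 \geq |c_1\mp c_2|$ (for $c_1\neq c_2$, in the minus case; the plus case is even easier since $c_1+c_2\leq c_1c_2$ fails only for $\{c_1,c_2\}=\{1,1\},\{1,2\}$, handled directly), and summing the geometric-type series and truncating produces exactly \eqref{eq:I_bound}. I would write the argument so that $N'$ governs how far we expand $J_0$ about the fixed point and $N$ (via repeated use of \eqref{eq:fourier-bound}) governs the accuracy of the Taylor expansion of $1-\cos(2\pi t)$, giving the two error terms in the statement.
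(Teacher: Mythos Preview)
Your overall strategy --- decompose $\phi^\pm(t)^2$ into a fixed part plus a perturbation vanishing to second order in $t$, expand, and integrate against $\widehat u(Kt)$ --- matches the paper's. But there is a genuine gap. You claim the $j$-th term in your expansion ``carries $(m^2/(q^2c_1c_2K^2))^j$'' and that this is $\ll (m/(q|c_1\mp c_2|K^2))^j$ via $c_1c_2\geq|c_1\mp c_2|$. That implication is equivalent to $m|c_1\mp c_2|\ll qc_1c_2$, which fails in general (take $q=1$, $c_1=1$, $c_2=2$ and $m$ large; recall $m$ may be as large as $K^{100}$). The underlying mistake is conflating the change in $\phi^\pm(t)^2$, which is indeed $\asymp m^2t^2/(q^2c_1c_2)$, with the change in $\phi^\pm(t)$ itself; if you expand $J_0$ in its argument using $|J_0^{(r)}|\ll 1$, it is the latter that governs the size of the terms.

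The paper resolves this by expanding in the correct parameter $B/\sqrt A$ rather than $B$: writing $\phi^\pm(t)^2=4(A+B(t))$ with $A=(2\pi m)^2(c_1\pm c_2)^2/(qc_1c_2)^2$ and $|B(t)|\asymp m^2t^2/(q^2c_1c_2)$, one computes $|B(t)|/\sqrt A\asymp Rt^2$ with $R=m/(q|c_1\pm c_2|)$ --- already the desired quantity, no further inequality needed. This is made precise via the Bessel identity of \Cref{lem:bessel0_adition_theorem}, namely $J_0(2\sqrt{A+B})=\sum_{0\leq n<N'}\tfrac{(-1)^n}{n!}(B/\sqrt A)^nJ_n(2\sqrt A)+O((|B|/\sqrt A)^{N'})$, with $|J_n|\leq 1$. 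A secondary point: the size bound \eqref{eq:fourier-bound} alone cannot dispose of the polynomial-in-$t$ part of the expansion (the $t^0$ term, bounded this way, contributes $\ll K^{-1}$, which is just the trivial bound); one must use the exact vanishing $\int_\RR\widehat u(Kt)t^{2n}\,\mathrm dt=0$ of \eqref{eq:fourier-derivative}, after first truncating the integral to $|t|\leq 1/100$. It is this truncation-and-vanishing step that produces the $K^{-N}$ term in \eqref{eq:I_bound}.
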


\begin{proof}
The bound \eqref{eq:I-trivial-bound} follows immediately from the bound $|J_0(x)|\leq 1$ (see \eqref{eq:bessel-intrep}) and \eqref{eq:fourier-bound}:
\begin{equation*}
|\mathcal I_{m,K,q}^\pm(c_1,c_2)|
\leq 
\int_\RR\left|\widehat{u}(Kt)\right|\mathrm{d}t
\ll K^{-1}.
\end{equation*}

The second bound \eqref{eq:I_bound} is obtained by approximating the $J_0$ term by a polynomial in $t$ near $t=0$.

We begin by truncating the integral
$$
\mathcal I_{m,K,q}^\pm(c_1,c_2)
=
\int_\RR\widehat{u}(Kt)J_0\left(4\pi m\sqrt{\frac{1}{q^2c_1^2} + \frac{1}{q^2c_2^2} \pm 2\frac{\cos(2\pi t)}{q^2c_1c_2}}\right)\mathrm{d}t
$$
to the range $|t|\leq \frac{1}{100}$.
Using \eqref{eq:u-decay} and the bound $|J_0(x)|\leq 1$ we have that for any positive integer \(N\),
\begin{multline*} 
\left|\int_{|t| > \frac{1}{100}} \widehat{u}(Kt)J_0\left(4\pi m\sqrt{\frac{1}{q^2c_1^2} + \frac{1}{q^2c_2^2} \pm 2\frac{\cos(2\pi t)}{q^2c_1c_2}}\right)\mathrm{d}t\right|
\\\leq 
\int_{|t| > \frac{1}{100}} \left|\widehat u(Kt)\right| \mathrm dt
\ll_N K^{-N}.
\end{multline*}
Thus
\begin{multline}\label{eq:I-truncated}
\mathcal I_{m,K,q}^\pm(c_1,c_2)
\\=
\int_{|t|<\frac{1}{100}}\widehat{u}(Kt)J_0\left(4\pi m\sqrt{\frac{1}{q^2c_1^2} + \frac{1}{q^2c_2^2} \pm 2\frac{\cos(2\pi t)}{q^2c_1c_2}}\right)\mathrm{d}t
+
O_N\left(K^{-N}\right),
\end{multline}
and we are left with the task of bounding
\begin{equation*}  
\int_{|t|<\frac{1}{100}}\widehat{u}(Kt)J_0\left(4\pi m\sqrt{\frac{1}{q^2c_1^2} + \frac{1}{q^2c_2^2} \pm 2\frac{\cos(2\pi t)}{q^2c_1c_2}}\right)\mathrm{d}t.
\end{equation*}

We use the Taylor expansion:
$$
\cos(2\pi t) = 1 + \sum_{1\leq n < N} (-1)^n \frac{(2\pi t)^{2n}}{(2n)!}
+ O\left(t^{2N}\right).
$$ 
Plugging this into \eqref{eq:I-truncated}, we get 
\begin{multline}\label{eq:I-expression-j0-taylor}
\mathcal I_{m,K,q}^\pm(c_1,c_2)
=\\
\mathop{\int}_{|t|<\frac{1}{100}}\widehat{u}(Kt)J_0\left(\frac{4\pi m}{q}\sqrt{\left(\frac{1}{c_1} \pm \frac{1}{c_2}\right)^2 \pm \frac{2}{c_1c_2}\left(\sum_{1\leq n < N} (-1)^n \frac{(2\pi t)^{2n}}{(2n)!}
+ O\left(t^{2N}\right)\right)}\right)\mathrm{d}t
\\+
O_N\left(K^{-N}\right).
\end{multline}
We denote
$$
A = A^\pm(c_1,c_2,m,q) = \frac{(2\pi m)^2 (c_1\pm c_2)^2}{(c_1c_2)^2 q^2}
$$
and 
$$
B(t) = B^\pm(t;c_1,c_2,m,q)= \pm \frac{2(2\pi m)^2}{c_1c_2q^2}
\left(
\sum_{1\leq n \leq N} (-1)^n \frac{(2\pi t)^{2n}}{(2n)!}
+ O\left(t^{2N}\right)
\right).
$$
With this choice of \(A\) and \(B\), (\ref{eq:I-expression-j0-taylor}) reads
\begin{equation}\label{eq:I-expression-j0-AB}
\mathcal I_{m,K,q}^{\pm}(c_1,c_2)=\int_{|t|<\frac{1}{100}} \widehat u(Kt) J_0\left(2\sqrt{A+B(t)}\right)\mathrm dt
+
O_N\left(K^{-N}\right).
\end{equation}
We now apply \Cref{lem:bessel0_adition_theorem} to $J_0\left(2\sqrt{A+B(t)}\right)$. 
This shows that for any positive integer $N'$,
\begin{equation}\label{eq:j0-sqrt-ab}
J_0\left(2\sqrt{A+B(t)}\right)
=
\sum_{0\leq \ell < N'}
\frac{(-1)^\ell}{\ell!}
\left(\frac{B(t)}{\sqrt{A}}\right)^\ell J_\ell\left(2\sqrt{A}\right)
+ O\left(\left(\frac{|B(t)|}{\sqrt{A}}\right)^{N'}\right).
\end{equation}
Let $R = \frac{4\pi m}{q|c_1\pm c_2|}$. Then for $c_1 \neq c_2$ and $|t|<\frac{1}{100}$, one has
\begin{equation}\label{eq:B-over-sqrtA}
\frac{B(t)}{\sqrt{A}}
=
\pm R
\left(\sum_{1\leq n< N}\frac{(-1)^n (2\pi t)^{2n}}{(2n)!}+ O\left(t^{2N}\right)\right)
\ll 
Rt^2.
\end{equation}
Replacing this expression for $B(t)/\sqrt{A}$ in \eqref{eq:j0-sqrt-ab}, we get that
\begin{multline*}
J_0\left(2\sqrt{A + B(t)}\right) 
\\
=
\sum_{0\leq \ell<N'} \frac{(\mp R)^\ell}{\ell!} \left(\sum_{1\leq n<N}\frac{(-1)^n (2\pi t)^{2n}}{(2n)!}
+ O\left(t^{2N}\right) \right)^\ell
J_\ell\left(2\sqrt{A}\right)
+ 
O\left( R^{N'}t^{2N'}\right).
\end{multline*}
Multiplying out the $\ell$\textsuperscript{th} powers, and using the bound given in \eqref{eq:B-over-sqrtA} we get that
\begin{equation}\label{eq:J_0-taylor-expansion1}
J_0\left(2\sqrt{A + B(t)}\right) 
\\
=
\sum_{0\leq n< NN'} C_n t^{2n}
+ O\left(
\sum_{1\leq \ell < N'}
\ell \cdot 
\frac{ R^\ell t^{2(\ell - 1)}t^{2N}}{\ell !}
\right)
+O\left( R^{N'}t^{2N'}\right)
\end{equation}
for some coefficients $C_n$ depending on $m,q,c_1$ and $c_2$ and satisfying 
\begin{equation}\label{eq:Cn-bounds}
C_n \ll_{N,N'} 1 + R^{N'}.
\end{equation}
Furthermore, by considering the two cases $Rt^2 \leq 1$ and $Rt^2 > 1$, we get
$$
\sum_{1\leq \ell < N'}
\frac{R^\ell t^{2(\ell - 1)}t^{2N}}{(\ell-1) !}
\ll Rt^{2N} + R^{N'}t^{2(N'+N)}.
$$
With this, \eqref{eq:J_0-taylor-expansion1} becomes
\begin{multline*}
J_0\left(2\sqrt{A + B(t)}\right) 
=
\sum_{0\leq n<NN'} C_n t^{2n}
+ O\left(Rt^{2N}\right)
+ O\left(R^{N'}t^{2(N'+N)}\right)
+ O\left( R^{N'}t^{2N'}\right)
\\
=
\sum_{0\leq n<NN'} C_n t^n
+ O\left(Rt^{2N}\right)
+ O\left( R^{N'}t^{2N'}\right).
\end{multline*}
For the last equality we used the fact that the error term $O\left(R^{N'}t^{2(N'+N)}\right)$ is dominated by $O\left( R^{N'}t^{2N'}\right)$ since $|t|\leq \frac{1}{100}$.

Plugging this expression for $J_0\left(2\sqrt{A + B(t)}\right)$ back into \eqref{eq:I-expression-j0-AB}, we get
\begin{multline}\label{eq:I-expression-taylor1}
\mathcal I_{m,K,q}^\pm(c_1,c_2)
=
\sum_{0\leq n<NN'}
C_n
\int_{|t|<\frac{1}{100}} \widehat{u}(Kt) t^{2n}\mathrm dt
\\+
O\left(
R
\int_{|t|<\frac{1}{100}} |\widehat{u}(Kt) t^{2N}|\mathrm dt
\right)
+
O\left(R^{N'}
\int_{|t|<\frac{1}{100}} |\widehat{u}(Kt) t^{2N'}|\mathrm dt
\right)
+ O_N\left( K^{-N}\right).
\end{multline}
Using \eqref{eq:fourier-bound} we have that
$$
\int_{|t|<\frac{1}{100}} |\widehat{u}(Kt) t^{2N}|\mathrm dt
\ll_N  K^{-2N-1} \ll_N K^{-N-1}
$$
and
$$
\int_{|t|<\frac{1}{100}} |\widehat{u}(Kt) t^{2N'}|\mathrm dt
\ll_{N'} 
K^{-2N'-1}.
$$
With this \eqref{eq:I-expression-taylor1} becomes
\begin{multline}\label{eq:I-expression-taylor2}
\mathcal I_{m,K,q}^\pm(c_1,c_2)
=
\sum_{0\leq n<NN'}
C_n
\int_{|t|<\frac{1}{100}} \widehat{u}(Kt) t^{2n}\mathrm dt
\\+
O_N\left(
RK^{-N-1}
\right)
+
O_{N'}\left(
R^{N'}K^{-2N'-1}
\right)
+ O_N\left( K^{-N}\right).
\end{multline}
We next bound the sum over \(n\) above.
From \eqref{eq:fourier-derivative} we have that
$$
\int_{|t|<\frac{1}{100}} \widehat{u}(Kt) t^{2n}\mathrm dt
=
-\int_{|t|>\frac{1}{100}} \widehat{u}(Kt) t^{2n}\mathrm dt.
$$
Using \eqref{eq:u-decay}, which gives \(\widehat{u}(Kt)\ll_{N,N'} (K|t|)^{-100N'-N}\) for \(|t|\geq 1/100\), we bound this last integral by 
\begin{multline*}
\left|\int_{|t|>\frac{1}{100}} \widehat{u}(Kt) t^{2n}\mathrm dt\right|
\leq 
\int_{|t|>\frac{1}{100}} \left|\widehat{u}(Kt)\right| t^{2n}\mathrm dt
\\
\ll_{N,N'}
K^{-100N'-N}\int_{|t|>\frac{1}{100}} |t|^{-100N'-N+2n}\mathrm dt
\ll_{N,N',n} K^{-100N'-N}.
\end{multline*}
In combination with the bound \eqref{eq:Cn-bounds} for the coefficients $C_n$, we get
\begin{multline*}
\sum_{0\leq n<NN'}
C_n
\int_{|t|<\frac{1}{100}} \widehat{u}(Kt) t^{2n}\mathrm dt
\ll_{N,N'} \sum_{0\leq n<NN'}(1+R^{N'})K^{-100N'-N}\\
\ll_{N,N'} K^{-N} + R^{N'}K^{-100N'-N}.
\end{multline*}
With this, \eqref{eq:I-expression-taylor2} now becomes
\begin{multline}\label{eq:I-expression-bound}
\mathcal I_{m,K,q}^\pm(c_1,c_2)
\ll_{N,N'}
R^{N'}K^{-100N'-N}
+
RK^{-N-1}
+
R^{N'}K^{-2N'-1}
+
K^{-N}.
\end{multline}
Since $R\ll m \ll K^{100}$, the error term $RK^{-N-1}$ is bounded by $K^{99-N}$, and the error term $R^{N'}K^{-100N'-N}$ is bounded by $K^{-N}$.
And so \eqref{eq:I-expression-bound} reads
$$
\mathcal I_{m,K,q}^\pm(c_1,c_2)
\ll_{N,N'}
R^{N'}K^{-2N'-1}
+ K^{99 -N}
\ll_{N,N'}
\frac{1}{K}
\left(\frac{4 \pi m}{q|c_1 \pm c_2|K^2}\right)^{N'}
+
K^{99 -N}.
$$
\Cref{eq:I_bound} now follows after replacing $N$ by $N+99$.
\end{proof}

Equipped with these bounds for the integrals \(\mathcal I_{m,K,q}^\pm(c_1,c_2)\) and the Weil bound for the Kloosterman sums (\Cref{lem:twisted_kloosterman_bound}), we are now in a position to prove \Cref{prop:seond-moment-weight}.

\begin{proof}[Proof of \Cref{prop:seond-moment-weight}]

We return to the expression for $\sigma_K^2$ given in \Cref{lem:sigmaK-transformed}.
Let $\epsilon > 0$ and write 
\begin{equation}\label{eq:sigmaK-s1-s2-decomposition}
\sigma_K^2
=
(-1)^\delta 8\pi^2 (T_1+T_2)
+ O\left(e^{-K/2}\right), 
\end{equation}
where
\begin{multline*}
T_1=
\\
\sum_{\substack{1\leq c_1,c_2 \leq \frac{200m}{Kq}\\ |c_1 -c_2|\geq \frac{m}{K^{2-\epsilon}q}}}
\frac{S_\chi(m,m,qc_1)}{qc_1}
\frac{S_\chi(m,m,qc_2)}{qc_2}
\left( \mathcal I_{m,K,q}^-(c_1,c_2) - (-1)^\delta \mathcal I_{m,K,q}^+(c_1,c_2)\right),
\end{multline*}
and 
\begin{multline*}
T_2=
\\
\sum_{\substack{1\leq c_1,c_2 \leq \frac{200m}{Kq}\\ |c_1 -c_2|< \frac{m}{K^{2-\epsilon}q}}}
\frac{S_\chi(m,m,qc_1)}{qc_1}
\frac{S_\chi(m,m,qc_2)}{qc_2}
\left( \mathcal I_{m,K,q}^-(c_1,c_2) - (-1)^\delta \mathcal I_{m,K,q}^+(c_1,c_2)\right).
\end{multline*}

We first bound $T_1$. 
Note that since $c_1,c_2\geq 1$, the condition $|c_1 - c_2|\geq m/(K^{2-\epsilon}q)$ implies $|c_1+c_2|\geq m/(K^{2-\epsilon}q)$ also.
Therefore for all $c_1$ and $c_2$ in the range of summation of $T_1$, the bound \eqref{eq:I_bound} of \Cref{lem:Ibounds} gives, by choosing $N$ and $N'$ sufficiently large in terms of $A$ and $\epsilon$:
$$
\mathcal I_{m,K,q}^\pm(c_1,c_2)\ll_{A,\epsilon} K^{-A} \text{ for any } A\geq0.
$$
Bounding the Kloosterman sums trivially now shows $T_1 \ll_{A,\epsilon} \big(\frac{m}{Kq}\big)^2 K^{-A}$ for all $A\geq 0$.
Since we assume $m\leq K^{100}$, we conclude
\begin{equation}\label{eq:s1_bound}
T_1 \ll_{A,\epsilon} q^{-2} K^{-A} \; \text{ for any } A\geq 0.
\end{equation}

We next bound $T_2$.
Using the bound \eqref{eq:I-trivial-bound} of \Cref{lem:Ibounds} for $\mathcal I_{m,K,q}^\pm(c_1,c_2)$, we obtain 
\begin{equation*}
T_2
\ll 
\frac{1}{K}
\sum_{\substack{1\leq c_1,c_2 \leq \frac{200m}{Kq}\\ |c_1 -c_2|< \frac{m}{K^{2-\epsilon}q}}}
\frac{|S_\chi(m,m,qc_1)|}{qc_1}
\frac{|S_\chi(m,m,qc_2)|}{qc_2}.
\end{equation*}
Writing \(c_2=c_1+h\), and bounding the Kloosterman sums using \Cref{lem:twisted_kloosterman_bound} shows
\begin{multline}\label{eq:s2minus-estimate-weil-bound}
T_2
\ll
\frac{1}{K}
\sum_{0\leq h \leq \frac{m}{K^{2-\epsilon}q}}
\sum_{1\leq c\leq  \frac{200m}{Kq}}
\frac{\divis(qc)(m,qc)^{1/2}c^{1/2}}{qc}
\\ \times
\frac{\divis(q(c+h))(m,q(c+h))^{1/2}(c+h)^{1/2}}{q(c+h)}
q g(q')^2.
\end{multline}
We use the estimates $\divis(n) \ll_\epsilon n^\epsilon$ and $(m,qc) \leq (m,q)(m,c)$, which give
$$
T_2
\ll_\epsilon
\frac{(m,q) g(q')^2}{Kq^{1-2\epsilon}}
\sum_{0\leq h \leq \frac{m}{K^{2-\epsilon}q}}
\sum_{1\leq c\leq \frac{200m}{Kq}}
\frac{(m,c)^{1/2}}{c^{1/2-\epsilon}}
\cdot
\frac{(m,c+h)^{1/2}}{(c+h)^{1/2-\epsilon}}.
$$
Applying the Cauchy-Schwarz inequality to the sum over \(h\), it follows
\begin{multline}\label{eq:s2-cauchy-schwarz}
T_2 
\ll_\epsilon
\frac{(m,q) g(q')^2}{Kq^{1-2\epsilon}}
\sum_{0\leq h \leq \frac{m}{K^{2-\epsilon}q}}
\left( \sum_{1\leq c\leq \frac{200m}{Kq}} \frac{(m,c)}{c^{1-2\epsilon}}\right)^{1/2}
\left( \sum_{1\leq c\leq \frac{200m}{Kq}} \frac{(m,c+h)}{(c+h)^{1-2\epsilon}}\right)^{1/2}
\\
\ll_\epsilon
\frac{(m,q) g(q')^2}{Kq^{1-2\epsilon}} 
\cdot
\frac{m}{K^{2-\epsilon} q}
\sum_{c\leq \frac{400m}{Kq}}
\frac{(m,c)}{c^{1-2\epsilon}}.
\end{multline}
We estimate
$$
\sum_{ c \leq \frac{400 m}{Kq}}
\frac{(m,c)}{c^{1 - 2\epsilon}}
=
\sum_{d\mid m} \sum_{\substack{ c' \leq \frac{400m}{Kqd}\\ (c',m)=1}}\frac{d}{(c'd)^{1 - 2\epsilon}}
\ll 
\sum_{d\mid m} d^{2\epsilon} \left(\frac{m}{Kqd}\right)^{2\epsilon}
\ll_\epsilon \frac{m^{3\epsilon}}{(Kq)^{2\epsilon}}.
$$
Replacing this in \eqref{eq:s2-cauchy-schwarz} shows
\begin{equation}\label{eq:s2_bound}
T_2
\ll_\epsilon
\frac{m^{1+3\epsilon}(m,q) g(q')^2}{K^3 q^2}.
\end{equation}

Finally, we return to \eqref{eq:sigmaK-s1-s2-decomposition}. 
Applying the bound \eqref{eq:s1_bound} for \(T_1\) and the bound \eqref{eq:s2_bound} for \(T_2\), we conclude
$$
\sigma_K^2
\ll_\epsilon
\frac{m^{1+3\epsilon}(m,q) g(q')^2}{K^3 q^2}
+ K^{-1000}.
$$
Replacing \(\epsilon\) by \(\epsilon/3\) completes the proof of \Cref{prop:seond-moment-weight}.
\end{proof}
\section{Index Average: Proof of \texorpdfstring{\Cref{thm:m-avg}}{Theorem 1.2}}\label{sec:index_ave}

In this section we prove \Cref{prop:second-moment-index}, which, as shown in \cref{sec:deductions}, implies Theorem \ref{thm:m-avg}. 
Throughout this section, we let \(q\) and \(k\) be positive integers, \(M\geq1\) and \(\chi\) be a character modulo \(q\) satisfying \(\chi(-1)=(-1)^k\).
The second moment \(\sigma_M^2\) (defined in \eqref{eq:sigmaM-def}) depends upon these \(q,k, M\) and \(\chi\). 
All subsequent lemmas are uniform in these parameters, unless additional assumptions are stated explicitly.
Recall from the expression \eqref{eq:sigmaM-def} and the definition of \(\Delta_{k,q,\chi}\) given in \eqref{def:delta} that
\begin{align}\label{sigma0}
\sigma_M^2
=&(-1)^k \frac{4\pi^2}{M}
\sum_m 
u\left(\frac mM\right) 
\Delta_{k,q,\chi}(m,m)^2
\\=&
(-1)^k\frac {4\pi^2}{M}
\sum_{m}
u\left(\frac mM\right)
\sum_{c_1, c_2 \geq1} 
\frac{S_\chi(m,m;qc_1)}{qc_1}
\frac{S_\chi(m,m;qc_2)}{qc_2}
\nonumber\\
&\hspace{13em}\times
J_{k-1}\left(\frac{4\pi m}{qc_1}\right)
J_{k-1}\left(\frac{4\pi m}{qc_2}\right).
\nonumber
\end{align}

Our strategy to bound \(\sigma_M^2\) is essentially to open the Kloosterman sums and apply Poisson summation. 
This has the disadvantage of forgoing the Weil bound for the Kloosterman sums, but shortens the length of the \(m\)-sum, essentially saving a factor of \(M/k\). 
After Poisson, applying trivial bounds recovers the bound \(\sigma_M^2\ll Mf(q)  (Mkq)^\epsilon/(kq)^2\), which is essentially the same bound one obtains after bounding each term in the sum \(\sigma_M^2\) individually using the Weil bound. 
However, after Poisson we are able to find extra cancellation in the terms of the dual sum - bounding them non-trivially will give \Cref{prop:second-moment-index}. 

Our first step is to apply Poisson summation to the sum over \(m\) in (\ref{sigma0}). 

\begin{lemma}[Poisson summation] \label{lem:poisson_summation}
Suppose \(M\leq k^{1000}\). 
Then
\begin{equation*}
\sigma_M^2
=(-1)^k\frac {4\pi^2}{Mq^2} 
\sum_{c_1, c_2\leq \frac{200M}{kq}}
\frac1{c_1c_2}\sum_{n \in\mathbb{N}}
\mathcal N_\chi(n;qc_1,qc_2)
\mathcal I_{M,k}(n;qc_1,qc_2)
+O\left(e^{-k}\right).
\end{equation*}
Here, using the notation \(a^\dagger\equiv a+\overline a \bmod \ell\) we have for integers \(\ell_1\) and \(\ell_2\),
\begin{equation*}
\mathcal N_\chi(n;\ell_1,\ell_2)
\coloneqq 
\sum_{\substack{a_1 \bmod {\ell_1}\\a_2\bmod {\ell_2}\\(a_1,\ell_1)=(a_2,\ell_2)=1}}
\chi(a_1)\chi(a_2)
\mathbf{1}\{n\equiv a_1^\dagger \ell_2+a_2^\dagger \ell_1\bmod {\ell_1\ell_2}\}, 
\end{equation*}
and
\begin{equation*}
\mathcal I_{M,k}(n;\ell_1,\ell_2)
\coloneqq 
\int_{-\infty}^\infty 
u\left(\frac tM\right)
J_{k-1}\left(\frac{4\pi t}{\ell_1}\right)
J_{k-1}\left(\frac{4\pi t}{\ell_2}\right)
e\left(\frac{tn}{\ell_1\ell_2}\right)
\mathrm dt.
\end{equation*}
\end{lemma}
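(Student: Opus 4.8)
The plan is to prove Lemma~\ref{lem:poisson_summation} by opening the twisted Kloosterman sums in \eqref{sigma0} and applying the Poisson summation formula to the resulting sum over $m$. First I would truncate the $c_1,c_2$ sum to the range $c_1,c_2\leq 200M/(kq)$. As in the proof of Lemma~\ref{lem:sigmaK-transformed}, if $4\pi m/(qc)\leq (k-1)/4$ then part~\ref{besi} of Lemma~\ref{lem:bessel-bounds} gives exponential decay of $J_{k-1}(4\pi m/(qc))$ in $k$; since $m\sim M\leq k^{1000}$ lies in the support of $u(\cdot/M)$ and $\lvert S_\chi(m,m;qc)\rvert \leq cq$ trivially, the tail $c_1,c_2\geq 200M/(kq)$ contributes $O(e^{-k})$. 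In the remaining finite range I interchange the (now finite) $c_1,c_2$ summation with the $m$-summation, which is legitimate.

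Next I expand each Kloosterman sum using its definition \eqref{def:kloosterman}: write $S_\chi(m,m;qc_i)=\sum_{a_i\bmod qc_i,\,(a_i,qc_i)=1}\chi(a_i)e(m a_i^\dagger/(qc_i))$, where $a_i^\dagger\equiv a_i+\overline{a_i}\bmod qc_i$. Pulling the $a_1,a_2$ sums outside, the inner sum over $m$ becomes
\[
\sum_{m}u\!\left(\frac mM\right)J_{k-1}\!\left(\frac{4\pi m}{qc_1}\right)J_{k-1}\!\left(\frac{4\pi m}{qc_2}\right)e\!\left(m\Big(\frac{a_1^\dagger}{qc_1}+\frac{a_2^\dagger}{qc_2}\Big)\right).
\]
Since $a_1^\dagger/(qc_1)+a_2^\dagger/(qc_2)=(a_1^\dagger qc_2+a_2^\dagger qc_1)/(q^2c_1c_2)$, the additive frequency depends on $a_1,a_2$ only through the residue $r\equiv a_1^\dagger(qc_2)+a_2^\dagger(qc_1)\bmod q^2c_1c_2$ (with $\ell_1=qc_1$, $\ell_2=qc_2$, so $\ell_1\ell_2=q^2c_1c_2$). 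Grouping the $(a_1,a_2)$ pairs by this residue class produces exactly the weight $\mathcal N_\chi(n;qc_1,qc_2)$ once Poisson is applied and $n$ indexes the dual frequency.

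Then I apply Poisson summation in $m$: for a fixed residue class the sum $\sum_{m}F(m)e(mr/L)$ with $L=q^2c_1c_2$ equals $\frac1L\sum_{n\in\mathbb Z}\widehat F\big(\tfrac{n-r}{L}\big)\cdot(\text{phase})$, or more cleanly $\sum_{m}F(m)e(mr/L)=\frac1L\sum_{n\equiv r\bmod L}\widehat F(n/L)$ after a standard rearrangement; here $F(t)=u(t/M)J_{k-1}(4\pi t/\ell_1)J_{k-1}(4\pi t/\ell_2)$, so $\widehat F(n/L)=\mathcal I_{M,k}(-n;\ell_1,\ell_2)$ (up to the sign convention in $e(\cdot)$, which I would fix so the stated $\mathcal I_{M,k}$ with argument $e(tn/(\ell_1\ell_2))$ appears). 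Collecting the factor $1/L=1/(q^2c_1c_2)$, summing over $n$, and using the parity of $F$ to restrict to $n\in\mathbb N$ (the $n=0$ term and $n<0$ terms being folded in appropriately, or absorbed since $\widehat F$ of a real even-ish combination is handled symmetrically — I would in fact keep all $n\in\mathbb Z$ and then note $\mathcal I_{M,k}(n)=\overline{\mathcal I_{M,k}(-n)}$ together with $\mathcal N_\chi(n)=\overline{\mathcal N_\chi(-n)}$ to rewrite as a sum over $n\in\mathbb N$ of the real part, matching the claimed formula) yields precisely the asserted identity, with the overall constant $(-1)^k 4\pi^2/(Mq^2)$ coming from \eqref{sigma0}.

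The main obstacle is bookkeeping rather than depth: one must verify that the absolute convergence needed to interchange $\sum_{c_1,c_2}$, $\sum_{a_1,a_2}$, $\sum_m$ and the Poisson transform is genuinely justified after truncation — here the compact support of $u$ makes the $m$-sum finite, so this is routine — and, more delicately, that the grouping of $(a_1,a_2)$ pairs by the residue $a_1^\dagger\ell_2+a_2^\dagger\ell_1\bmod\ell_1\ell_2$ correctly reproduces the combinatorial factor $\mathcal N_\chi(n;\ell_1,\ell_2)$ after Poisson, including getting the conductor/coprimality conditions $(a_i,\ell_i)=1$ exactly right and tracking the sign in the dual variable $n$ so that the phase $e(tn/(\ell_1\ell_2))$ in $\mathcal I_{M,k}$ has the stated sign. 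I would also double-check the error term: the discarded $c_1,c_2$ tail is $O(e^{-k})$ and no further truncation of the $n$-sum is performed at this stage, so the only error is the $O(e^{-k})$ already identified.
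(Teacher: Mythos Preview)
Your approach is essentially the paper's: truncate the \(c_1,c_2\)-sums using Lemma~\ref{lem:bessel-bounds}\ref{besi}, open the Kloosterman sums, group by residue mod \(q^2c_1c_2\), and apply Poisson in \(m\). Two bookkeeping points deserve correction.

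First, the Poisson identity you wrote, \(\sum_m F(m)e(mr/L)=\tfrac1L\sum_{n\equiv r\bmod L}\widehat F(n/L)\), is not the correct form. The paper applies Poisson directly to \(G(t)=F(t)e(tr/L)\), obtaining \(\sum_m G(m)=\sum_{\tilde m}\widehat G(\tilde m)=\sum_{\tilde m}\int F(t)e(t(r/L-\tilde m))\,\mathrm dt\); writing \(n=r-\tilde m L\) and then summing over the residues \(r\bmod L\) with weight \(\mathcal N_\chi(r)\) yields \(\sum_{n\in\mathbb Z}\mathcal N_\chi(n)\mathcal I_{M,k}(n)\) with no \(1/L\). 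The factor \(1/(q^2c_1c_2)\) in the final formula is not produced by Poisson; it is already present in \eqref{sigma0} as the product of the denominators \(1/(qc_1)\) and \(1/(qc_2)\) of the two Kloosterman sums.

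Second, the paper's own proof actually concludes with \(\sum_{n\in\mathbb Z}\), and subsequent lemmas use the sum over \(|n|\leq\cdots\), so the ``\(n\in\mathbb N\)'' in the displayed statement should be read as \(n\in\mathbb Z\). Your proposed reduction to \(n\in\mathbb N\) via conjugation symmetries (\(\mathcal N_\chi(-n)=\mathcal N_\chi(n)\) is real and \(\mathcal I_{M,k}(-n)=\overline{\mathcal I_{M,k}(n)}\)) is valid but yields twice the real part for \(n\geq1\) plus the \(n=0\) term, which is not literally the displayed expression; it is unnecessary here.
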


\begin{proof}

Our first step is to truncate the sums over \(c_1\) and \(c_2\) in (\ref{sigma0}). 
Using part \ref{besi} of \Cref{lem:bessel-bounds} and the assumption \(M\leq k^{1000}\), exactly as in \eqref{eq:sigmaK2-truncated} the contribution of \(c_1, c_2\geq 200M/(kq)\) to (\ref{sigma0}) is seen to be \(O(e^{-k})\). 
Thus from (\ref{sigma0}) we conclude 
\begin{multline*}
\sigma_M^2
=(-1)^k\frac {4\pi^2}{M}
\sum_{m}u\left(\frac mM\right) \\
\times \sum_{c_1, c_2 \leq \frac{200M}{kq}} 
\frac{S_\chi(m,m;qc_1)}{qc_1}
\frac{S_\chi(m,m;qc_2)}{qc_2}
J_{k-1}\left(\frac{4\pi m}{qc_1}\right)
J_{k-1}\left(\frac{4\pi m}{qc_2}\right)
+O\left(e^{-k}\right).
\end{multline*}
Interchanging the order of summation, we have
\begin{multline}\label{sigma1}
\sigma_M^2
=(-1)^k\frac {4\pi^2}{Mq^2}
\sum_{c_1, c_2 \leq \frac{200M}{kq}}
\frac{1}{c_1c_2}\\
\times\sum_{m} 
u\left(\frac mM\right)
S_\chi(m,m;qc_1)S_\chi(m,m;qc_2)
J_{k-1}\left(\frac{4\pi m}{qc_1}\right) 
J_{k-1}\left(\frac{4\pi m}{qc_2}\right)
+O\left(e^{-k}\right).
\end{multline}

It is now convenient to open the Kloosterman sums. 
Observe
\begin{multline*}
S_\chi(m,m;qc_1)S_\chi(m,m;qc_2)
=\sum_{\substack{a_1 \bmod {qc_1}\\a_2\bmod {qc_2}\\(a_1,qc_1)=(a_2,qc_2)=1}}
\chi(a_1)\chi(a_2)
e\left(m\left(\frac{a_1^\dagger}{qc_1}+\frac{a_2^\dagger}{qc_2}\right)\right)\\
=\sum_{b\bmod {q^2c_1c_2}}\mathcal N_\chi(b;qc_1, qc_2)
e\left(\frac{mb}{q^2c_1c_2}\right).
\end{multline*}
Replacing this in (\ref{sigma1}), we obtain
\begin{multline}\label{sigma2}
\sigma_M^2
=(-1)^k\frac {4\pi^2}{Mq^2}
\sum_{c_1, c_2 \leq \frac{200M}{kq}}
\frac{1}{c_1c_2}
\sum_{b\bmod {q^2c_1c_2}}\mathcal N_\chi(b;qc_1,qc_2)\\
\sum_m u\left(\frac mM\right)
J_{k-1}\left(\frac{4\pi m}{qc_1}\right)
J_{k-1}\left(\frac{4\pi m}{qc_2}\right)
e\left(\frac{mb}{q^2c_1c_2}\right)
+O\left(e^{-k}\right).
\end{multline}

We apply Poisson summation to the inner sum over \(m\) in (\ref{sigma2}). 
This shows
\begin{multline*}
\sum_{m\in\mathbb{Z}} 
u\left(\frac mM\right)
J_{k-1}\left(\frac{4\pi m}{qc_1}\right) 
J_{k-1}\left(\frac{4\pi m}{qc_2}\right)
e\left(\frac{mb}{q^2c_1c_2}\right)\\
=
\sum_{\widetilde m\in\mathbb{Z}}
\int_{-\infty}^\infty 
u\left(\frac tM\right)
J_{k-1}\left(\frac{4\pi t}{qc_1}\right)
J_{k-1}\left(\frac{4\pi t}{qc_2}\right)
e\left(\frac{tb}{q^2c_1c_2}-t\widetilde m\right)
\mathrm dt.
\end{multline*}
Consequently, \eqref{sigma2} implies
\begin{multline*}
\sigma_M^2
=
(-1)^k \frac {4\pi^2}{Mq^2} 
\sum_{c_1, c_2\leq \frac{200M}{kq}} 
\frac1{c_1c_2} 
\sum_{b\bmod{q^2c_1c_2}}\mathcal N_\chi(b;qc_1,qc_2)
\\ 
\times \sum_{\widetilde m}
\int_{-\infty}^\infty 
u\left(\frac tM\right)
J_{k-1}\left(\frac{4\pi t}{qc_1}\right)
J_{k-1}\left(\frac{4\pi t}{qc_2}\right)
e\left(\frac{t(b-\widetilde mq^2c_1c_2)}{q^2c_1c_2}\right)
\mathrm dt
+O\left(e^{-k}\right),
\end{multline*}
and so
\begin{multline*}
\sigma_M^2
=
(-1)^k \frac {4\pi^2}{Mq^2} 
\sum_{c_1, c_2\leq \frac{200M}{kq}} \frac1{c_1c_2}
\sum_{b\bmod{q^2c_1c_2}}
\mathcal N_\chi(b;qc_1,qc_2)
\\
\times
\sum_{\widetilde m}
\mathcal I_{M,k}(b-\widetilde mq^2c_1c_2;qc_1,qc_2)
+O\left(e^{-k}\right).
\end{multline*}
Note \(\mathcal N_\chi(b;qc_1,qc_2)\) depends only on the value of \(b\) modulo \(q^2c_1c_2\). 
In other words, \(\mathcal N_\chi(b;qc_1,qc_2)=\mathcal N_\chi(b-\widetilde mq^2c_1c_2;qc_1,qc_2)\). 
Writing the summation in terms of the new variable \(n=b-\widetilde m q^2c_1c_2\) shows
\begin{equation*}
\sigma_M^2
=
(-1)^k\frac {4\pi^2}{Mq^2}
\sum_{c_1, c_2\leq \frac{200M}{kq}}
\frac1{c_1c_2}
\sum_{n\in\mathbb{Z}}\mathcal 
N_\chi(n;qc_1,qc_2)
\mathcal I_{M,k}(n;qc_1,qc_2)
+O\left(e^{-k}\right),
\end{equation*}
as claimed.
\end{proof}

We next give straightforward bounds for the `arithmetic' and `analytic' parts of the variance, \(\mathcal N_\chi(n;qc_1,qc_2)\) and \(\mathcal I_{M,k}(n;qc_1,qc_2)\) respectively.

\begin{lemma}\label{lem:Nbound}

Let \(n\) be an integer and \(\ell_1\), \(\ell_2\) be positive integers. 
Then \(\mathcal N_\chi(n;\ell_1,\ell_2)=0\) if \((\ell_1,\ell_2)\nmid n\). 
Otherwise, one has the bound
\begin{equation*}
|\mathcal N_\chi(n;\ell_1, \ell_2)|
\leq 
(\ell_1,\ell_2) 
\divis(\ell_1)
\divis(\ell_2)
f(\ell_1)^{1/2}f(\ell_2)^{1/2}.
\end{equation*}
\end{lemma}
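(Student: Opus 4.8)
The plan is to dominate $\mathcal N_\chi$ by the number of pairs it counts, reduce to prime-power moduli by the Chinese Remainder Theorem, and then for each prime power bound the number of solutions of a quadratic congruence. Since $|\chi(a_1)\chi(a_2)|\le 1$, the triangle inequality gives $|\mathcal N_\chi(n;\ell_1,\ell_2)|\le N(n)$, where $N(n)$ denotes the number of pairs $(a_1,a_2)$ with $a_i\bmod\ell_i$, $(a_i,\ell_i)=1$, and $n\equiv a_1^\dagger\ell_2+a_2^\dagger\ell_1\pmod{\ell_1\ell_2}$. The vanishing statement is then immediate: $(\ell_1,\ell_2)$ divides each of $\ell_2$, $\ell_1$ and $\ell_1\ell_2$, so the congruence forces $(\ell_1,\ell_2)\mid n$, whence $\mathcal N_\chi(n;\ell_1,\ell_2)=0$ whenever $(\ell_1,\ell_2)\nmid n$.

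I would next check that $N$ is multiplicative, $N(n)=\prod_p N_p(n)$, where $N_p(n)$ is the analogous count with $\ell_1,\ell_2$ replaced by $p^{v_p(\ell_1)},p^{v_p(\ell_2)}$. This is the Chinese Remainder Theorem together with the elementary observation that the residue of $a_1^\dagger\ell_2$ modulo $p^{v_p(\ell_1)+v_p(\ell_2)}$ depends only on $a_1\bmod p^{v_p(\ell_1)}$ — because $a_1^\dagger$ reduces modulo $p^{v_p(\ell_1)}$ to $(a_1\bmod p^{v_p(\ell_1)})+(a_1\bmod p^{v_p(\ell_1)})^{-1}$, and the factor $p^{v_p(\ell_2)}$ in $\ell_2$ absorbs the rest — and symmetrically for $a_2^\dagger\ell_1$. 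Both $N$ and each $N_p$ are symmetric under swapping the two moduli, so to bound $N_p(n)$ we may assume $\alpha:=v_p(\ell_1)\le v_p(\ell_2)=:\beta$. It then suffices to prove that $N_p(n)=0$ unless $p^\alpha\mid n$, and otherwise $N_p(n)\le p^{\alpha}\cdot 2f(p^\beta)^{1/2}$: the product of these bounds over $p$ is at most $(\ell_1,\ell_2)\divis(\ell_1)\divis(\ell_2)f(\ell_1)^{1/2}f(\ell_2)^{1/2}$, since $\divis(p^\alpha)\divis(p^\beta)\ge 2$ when $\beta\ge 1$, $f(p^\alpha)^{1/2}\ge 1$, and $N_p(n)=1$ trivially when $p\nmid\ell_1\ell_2$.

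Fix $p$ with $\alpha\le\beta$ as above. Reducing the congruence defining $N_p(n)$ modulo $p^\alpha$ kills its right-hand side (using $\beta\ge\alpha$), which shows $N_p(n)=0$ unless $p^\alpha\mid n$; and when $p^\alpha\mid n$, writing $n=p^\alpha n'$ and dividing through yields a congruence that, once $a_1\bmod p^\alpha$ is fixed, pins $a_2^\dagger$ to a single residue $s$ modulo $p^\beta$. As there are at most $p^\alpha$ admissible $a_1$, it remains to show that the number of $a\bmod p^\beta$ with $(a,p)=1$ and $a+\overline a\equiv s\pmod{p^\beta}$ is at most $2f(p^\beta)^{1/2}=2p^{\lfloor\beta/2\rfloor}$. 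Multiplying through by $a$ rewrites this condition as the quadratic $a^2-sa+1\equiv 0\pmod{p^\beta}$; completing the square — directly for odd $p$, and for $p=2$ after noting that $s$ is forced to be even, $s=2s'$ — identifies the solution set with the set of square roots modulo $p^\beta$ of the fixed residue $s^2-4$ (respectively $s'^2-1$). The number of square roots of any residue modulo $p^\beta$ is at most $2p^{\lfloor\beta/2\rfloor}$; this is precisely where the largest-square divisor $f$ enters the picture, as a quadratic congruence modulo $p^\beta$ has $\ll f(p^\beta)^{1/2}$ solutions. Hence $N_p(n)\le p^\alpha\cdot 2f(p^\beta)^{1/2}$, as required.

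I expect the main obstacle to be the prime-power bookkeeping: establishing the square-root count modulo $p^\beta$ (in particular the $p=2$ analysis of the quadratic), and verifying that the constant factors of $2$ produced at each prime are entirely absorbed by $\divis(p^\alpha)\divis(p^\beta)$, so that no spurious constant survives in the final bound. The multiplicativity step is routine, but needs a little care as to which residue classes the quantities $a_i^\dagger\ell_j$ actually depend upon.
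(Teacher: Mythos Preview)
Your argument is correct and reaches the same endpoint as the paper's proof --- bounding the solutions of $a^2-sa+1\equiv0\pmod{p^\beta}$ by $2f(p^\beta)^{1/2}$ via square-root counting --- but the organisation differs. The paper first solves the \emph{linear} congruence $n\equiv x\ell_2+y\ell_1\pmod{\ell_1\ell_2}$ globally, obtaining exactly $(\ell_1,\ell_2)$ solution pairs $(x_h,y_h)$ with $x_h$ determined mod $\ell_1$ and $y_h$ mod $\ell_2$; it then bounds $|\mathcal N_\chi|\le\sum_h v(x_h;\ell_1)v(y_h;\ell_2)$ with $v(x;\ell)=\#\{a:a^2-xa+1\equiv0\bmod\ell\}$, and only afterwards applies the Chinese Remainder Theorem to $v$ to reduce to prime powers. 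You instead apply CRT to the whole count $N(n)$ first, and at each prime use that one local modulus divides the other to let $a_1$ range freely and pin down $a_2^\dagger$. The paper's route exploits the quadratic constraint on \emph{both} $a_1$ and $a_2$, yielding the local factor $2f(p^{\alpha})^{1/2}\cdot 2f(p^{\beta})^{1/2}$; yours trades the $a_1$-quadratic for the trivial bound $p^\alpha$, giving $p^\alpha\cdot 2f(p^\beta)^{1/2}$. Both local bounds are dominated by $p^{\min}\divis(p^\alpha)\divis(p^\beta)f(p^\alpha)^{1/2}f(p^\beta)^{1/2}$, so both prove the lemma; your version is a bit shorter and sidesteps the global linear-congruence parametrisation, while the paper's version keeps a slightly sharper intermediate inequality. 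Your $p=2$ treatment (noting $s$ must be even and completing the square directly) is in fact cleaner than the paper's, which passes to modulus $2^{\beta+2}$.
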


\begin{proof}
Trivially,
\begin{multline}\label{eq:solution_N_count} 
\left|\mathcal N_\chi(n;\ell_1,\ell_2)\right|
=
\left|
\sum_{\substack{a_1 \bmod {\ell_1}\\a_2\bmod {\ell_2}\\(a_1,\ell_1)=(a_2,\ell_2)=1}}
\chi(a_1)\chi(a_2)
\mathbf{1}\{n\equiv a_1^\dagger \ell_2+a_2^\dagger \ell_1\bmod {\ell_1\ell_2}\}
\right|
\\ \leq 
\sum_{\substack{a_1 \bmod {\ell_1}\\a_2\bmod {\ell_2}\\(a_1,\ell_1)=(a_2,\ell_2)=1}}\mathbf{1}\{n\equiv a_1^\dagger \ell_2+a_2^\dagger \ell_1\bmod {\ell_1\ell_2}\}.
\end{multline}
Our goal is to bound the count of solutions above.

Suppose \((\ell_1,\ell_2)=g\), and write \(\ell_1=g\ell_1'\), \(\ell_2=g\ell_2'\), so that \((\ell_1',\ell_2')=1\). 
If \(g\nmid n\) then the congruence \(n\equiv x\ell_2+y\ell_1\bmod{\ell_1\ell_2}\) has no solutions \((x,y)\). 
Hence \(\mathcal N_\chi(n;\ell_1,\ell_2)=0\). 

On the other hand, suppose \(n=gn'\). 
Then 
\begin{equation*}
n\equiv x\ell_2+y\ell_1\bmod{\ell_1\ell_2}
\iff 
n'\equiv x\ell_2'+y\ell_1'\bmod{g\ell_1'\ell_2'}.
\end{equation*}
Let \((x_0, y_0)\) be a solution to \(x_0\ell_2'+y_0\ell_1'=n'\), which exists since \((\ell_1',\ell_2')=1\). 
One verifies
\begin{multline*}
n'\equiv x\ell_2'+y\ell_1'\bmod{g\ell_1'\ell_2'}
\\ \iff 
x\equiv x_0+h\ell_1'\bmod{g\ell_1'} 
\text{ and } 
y\equiv y_0-h\ell_2'\bmod {g\ell_2'} 
\text{ for some } h\bmod g.
\end{multline*}
In other words, there are exactly \(g\) solutions \((x,y)\) to the congruence \(n\equiv x\ell_2+y\ell_1\bmod{\ell_1\ell_2}\), for which \(x\) and \(y\) are determined uniquely modulo \(\ell_1\) and \(\ell_2\) respectively. 
Call these solutions \((x_h, y_h)_{h\bmod g}\). 

The count of solutions given in (\ref{eq:solution_N_count}) is therefore
\begin{multline}\label{eq:solution_N_count2}
\sum_{\substack{a_1 \bmod {\ell_1}\\a_2\bmod {\ell_2}\\(a_1,\ell_1)=(a_2,\ell_2)=1}}
\mathbf{1}\{n\equiv a_1^\dagger \ell_2+a_2^\dagger \ell_1\bmod {\ell_1\ell_2}\}
\\
=\sum_{h \bmod g} 
\left(
\sum_{\substack{a_1 \bmod {\ell_1}\\(a_1,\ell_1)=1}}
\mathbf{1}\{a_1^\dagger \equiv x_h\bmod {\ell_1}\}
\right)
\left(
\sum_{\substack{a_2 \bmod {c_2}\\(a_2,\ell_2)=1}}
\mathbf{1}\{a_2^\dagger \equiv y_h\bmod {\ell_2}\}
\right).
\end{multline}
Observe \(a^\dagger \equiv x\bmod \ell\iff a+\overline a\equiv x\bmod \ell\implies a^2-xa+1\equiv 0\bmod \ell\). 
Denote 
$$
v(x;\ell)=\#\{a\bmod \ell: a^2-xa+1\equiv 0\bmod \ell\}.
$$
It now follows from (\ref{eq:solution_N_count}) and (\ref{eq:solution_N_count2}) that 
\begin{equation}\label{eq:solution_N_count3}
|\mathcal N_\chi(n;\ell_1,\ell_2)| \leq \sum_{h\bmod g} v(x_h;\ell_1)v(y_h;\ell_2).
\end{equation}

We claim that for any integer \(x\) and positive integer \(\ell\), \(v(x;\ell)\leq \divis(\ell)f(\ell)^{1/2}\).
The lemma follows immediately from this claim and (\ref{eq:solution_N_count3}). 
We now prove the claim. 
Firstly note that, by the Chinese Remainder Theorem, \(v(x;\ell)\) is a multiplicative function of \(\ell\). 
Consequently, it suffices to bound \(v(x; p^\alpha)\) for prime powers \(p^\alpha\).

First suppose \(p\) is odd. 
Then 
$$
v(x;p^\alpha)=\#\{a \bmod{p^\alpha}: a^2-xa+1\equiv 0\bmod {p^\alpha}\}.
$$
Completing the square, 
\begin{multline*}
a^2-xa+1\equiv 0\bmod {p^\alpha}
\iff 
4a^2-4xa+4\equiv 0\bmod {p^\alpha}
\\ \iff 
(2a-x)^2\equiv x^2-4\bmod {p^\alpha}.
\end{multline*}
So, changing variables to \(y=2a-x\), we have \(v(x;p^\alpha)=\#\{y\bmod{p^\alpha}: y^2\equiv x^2-4 \bmod p^\alpha\}\). 
This count of solutions can be computed explicitly, see for instance \cite[Lemma 9.6]{MR3099744}. 
For our purposes, the following cheap bound suffices. 
We can assume there exists a solution \(y_0^2\equiv x^2-4\bmod{p^\alpha}\), otherwise \(v(x;p^\alpha)=0\) and we are done. 
Then any \(y\) solving \(y^2\equiv x^2-4\bmod{p^\alpha}\) satisfies \(y^2\equiv y_0^2\bmod{p^\alpha}\iff p^\alpha\mid (y-y_0)(y+y_0)\). 
It follows
\[
\begin{cases} 
p^{(\alpha+1)/2}\mid (y-y_0)\text{ or } p^{(\alpha+1)/2}\mid (y+y_0)& \text{ if } \alpha \text{ is odd},\\ 
p^{\alpha/2}\mid (y-y_0)\text{ or } p^{\alpha/2}\mid (y+y_0)& \text{ if } \alpha \text{ is even}.
\end{cases}
\]
In particular, 
\[
\begin{cases} 
y\equiv \pm y_0\bmod{p^{(\alpha+1)/2}} & \text{ if } \alpha \text{ is odd},\\
y\equiv \pm y_0\bmod{p^{\alpha/2}} &\text{ if } \alpha \text{ is even}.
\end{cases}
\]
Thus, modulo \(p^\alpha\), \(y\) is restricted to \(2p^{(\alpha-1)/2}=2f(p^\alpha)^{1/2}\) residue classes if \(\alpha\) is odd, and \(2p^{\alpha/2}=2f(p^\alpha)^{1/2}\) residue classes if \(\alpha\) is even. 
Thus \(v(x;p^\alpha)\leq 2 f(p^\alpha)^{1/2}\) if \(p\) is an odd prime.

Finally, we bound \(v(x;2^\alpha)\). 
Completing the square, 
\begin{multline*}
a^2-xa+1\equiv0\bmod{2^\alpha}
\iff 
4a^2-4xa+4\equiv 0\bmod {2^{\alpha+2}}
\\
\iff (2a-x)^2\equiv x^2-4 \bmod{2^{\alpha+2}}.
\end{multline*}
So, noting \((2a-x)^2\) modulo \(2^{\alpha+2}\) depends only on the value of \(a\) modulo \(2^\alpha\), we have
\begin{multline}\label{eq:mod2-counting}
v(x;2^\alpha)
=
\#\SET{a\bmod {2^\alpha}: (2a-x)^2\equiv x^2-4\bmod {2^{\alpha+2}}}\\
= \frac 12
\#\SET{a\bmod {2^{\alpha+1}}: (2a-x)^2\equiv x^2-4\bmod {2^{\alpha+2}}}\\
\leq \frac 12
\#\SET{y\bmod{2^{\alpha+2}}:y^2\equiv x^2-4\bmod{2^{\alpha+2}}}.
\end{multline}
In the last step, we used that \(y=2a-x\) runs over exactly those residues \(y\) modulo \(2^{\alpha+2}\) satisfying \(y\equiv x\bmod 2\) as \(a\) runs over residues modulo \(2^{\alpha+1}\).
As in the case of odd primes, one bounds 
$$
\#\SET{y\bmod{2^{\alpha+2}}:y^2\equiv x^2-4\bmod{2^{\alpha+2}}}\leq 2f(2^{\alpha+2})^{1/2}=4 f(2^{\alpha})^{1/2}.
$$
It follows from this and \eqref{eq:mod2-counting} that \(v(x;2^\alpha)\leq 2f(2^\alpha)^{1/2}\). 

We have now shown \(v(x;p^\alpha)\leq 2f(p^\alpha)^{1/2}\) for all primes \(p\). We therefore conclude from the multiplicativity of \(f\) and \(v(\cdot;\ell)\) that 
$$
v(x; \ell)\leq 2^{\#\{p\mid \ell\}}f(\ell)^{1/2}\leq \divis(\ell)f(\ell)^{1/2},
$$
as claimed.
\end{proof}

\begin{lemma}\label{Ibound}
Let \(n\) be an integer, and let \(\ell_1, \ell_2\leq 200M/k\) be positive integers. 
Then for any integer \(A\geq0\), 
\begin{equation*}
\mathcal I_{M,k}(n;\ell_1,\ell_2)
\ll_A 
M \left(\frac{|n|}{\ell_1+\ell_2}\right)^{-A}.
\end{equation*}
\end{lemma}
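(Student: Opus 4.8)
The plan is to prove the bound by repeated integration by parts in the variable $t$, extracting savings from the oscillation of the exponential $e(tn/(\ell_1\ell_2))$ while using only crude size estimates for the Bessel factors and their derivatives. Since the phase is linear in $t$, there is no stationary phase to worry about, and the Bessel functions enter only through trivial bounds.

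First I would dispose of the trivial cases. As $\mathrm{supp}(u)\subset[1/2,3]$, the integrand of $\mathcal I_{M,k}(n;\ell_1,\ell_2)$ is supported on $t\in[M/2,3M]$, where $|u(t/M)|\le1$ and $|J_{k-1}(\cdot)|\le1$ by \eqref{eq:bessel-intrep}; hence $|\mathcal I_{M,k}(n;\ell_1,\ell_2)|\ll M$, which settles $A=0$. When $n=0$ the claimed bound is vacuous for $A\ge1$, so one may assume $A\ge1$ and $n\ne0$ from now on. Then, writing $\Phi(t):=u(t/M)J_{k-1}(4\pi t/\ell_1)J_{k-1}(4\pi t/\ell_2)$ --- a smooth function supported in $[M/2,3M]$ --- I would integrate by parts $A$ times; the boundary terms vanish by compact support, giving
\[
\mathcal I_{M,k}(n;\ell_1,\ell_2)=\left(\frac{\ell_1\ell_2}{2\pi i n}\right)^{A}\int_{\RR}\Phi^{(A)}(t)\,e\!\left(\frac{tn}{\ell_1\ell_2}\right)\mathrm dt .
\]

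It then remains to bound $\Phi^{(A)}$. The key point, which I expect to be the only delicate one, is that uniformly in the integer order $k-1$ one has $\frac{\mathrm d^r}{\mathrm dt^r}J_{k-1}(4\pi t/\ell_j)\ll_r\ell_j^{-r}$; this follows by iterating the recurrence $J_\nu'=\tfrac12(J_{\nu-1}-J_{\nu+1})$ (see \cite{watson42}), since the Bessel functions of integer order that arise --- some possibly of negative index, for which $J_{-n}=(-1)^nJ_n$ --- are all bounded by $1$. Together with $u^{(r)}\ll_r 1$ and the hypothesis $\ell_j\le200M/k\le200M$ (so that $M^{-1}\ll\ell_j^{-1}$), a Leibniz expansion of $\Phi^{(A)}$ into $\ll_A 1$ products of derivatives of $u(\cdot/M)$, $J_{k-1}(4\pi\cdot/\ell_1)$ and $J_{k-1}(4\pi\cdot/\ell_2)$ of total order $A$ shows each such product is $\ll_A(\ell_1^{-1}+\ell_2^{-1})^{A}$, whence $\Phi^{(A)}(t)\ll_A\big((\ell_1+\ell_2)/(\ell_1\ell_2)\big)^{A}$. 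Since $\Phi^{(A)}$ is supported in an interval of length $\ll M$, plugging this in yields
\[
|\mathcal I_{M,k}(n;\ell_1,\ell_2)|\ll_A\left(\frac{\ell_1\ell_2}{|n|}\right)^{A}M\left(\frac{\ell_1+\ell_2}{\ell_1\ell_2}\right)^{A}=M\left(\frac{|n|}{\ell_1+\ell_2}\right)^{-A},
\]
as desired. Everything except the uniform-in-$k$ derivative bounds for the Bessel factors is a routine integration-by-parts computation.
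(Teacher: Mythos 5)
Your proposal is correct and follows essentially the same route as the paper: truncate to the support of $u$, integrate by parts $A$ times against the linear phase, and control the $A$-th derivative of the amplitude $u(t/M)J_{k-1}(4\pi t/\ell_1)J_{k-1}(4\pi t/\ell_2)$ using the recurrence $J_\nu'=\tfrac12(J_{\nu-1}-J_{\nu+1})$ together with $|J_n|\le1$ and $\ell_j\ll M$. The only cosmetic difference is that you package the derivative bound as $\bigl((\ell_1+\ell_2)/(\ell_1\ell_2)\bigr)^A$ while the paper writes $\ell_1^{-A}+\ell_2^{-A}$, which are comparable up to $A$-dependent constants and give the same conclusion.
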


\begin{proof}
This follows from a straightforward application of integration by parts.
To this end, we must differentiate the Bessel functions. 
For real \(\nu\), we have the identity (\cite[\textsection 3.2]{watson42})
\begin{equation*}
J_\nu'(z)=\frac12(J_{\nu-1}(z)-J_{\nu+1}(z)).
\end{equation*}
Using the bound \(|J_n(z)|\leq 1\) (valid for all integers \(n\), see \eqref{eq:bessel-intrep}), one has
\begin{equation*}
\frac {\mathrm d^A}{\mathrm dz^A} J_{k-1}(z)=\sum_{0\leq j\leq A} c(j;A) J_{k-1-(A+2j)}(z)\ll_A 1, 
\end{equation*}
where \(c(j;A)\) are some (computable) coefficients independent of \(k\). 
In particular,
\[
\frac{\mathrm d^A}{\mathrm dt^A} 
J_{k-1}\left(\frac {4\pi t}\ell\right)
\ll_A \ell^{-A}.
\]
Similarly, since \(u^{(A)}\ll_A 1\), one has
\[
\frac{\mathrm d^A}{\mathrm dt^A} 
u\left(\frac tM\right)
\ll_A M^{-A}.
\]
Combining the above calculations, we have
\begin{equation*}
\frac{\mathrm d^A}{\mathrm dt^A} 
\left\{
u\left(\frac tM\right) 
J_{k-1}\left(\frac{4\pi t}{\ell_1}\right) 
J_{k-1}\left(\frac{4\pi t}{\ell_2}\right)
\right\} 
\ll_A M^{-A}+\ell_1^{-A}+\ell_2^{-A}\ll_A \ell_1^{-A}+\ell_2^{-A}.
\end{equation*}
In the last step, we used the assumption \(\ell_1, \ell_2\leq 200M/k\ll M\). 
Finally, integrating \(\mathcal I_{M,k}(n;\ell_1,\ell_2)\) by parts \(A\) times, one obtains
\begin{align*}
\mathcal I_{M,k}(n;&\ell_1,\ell_2)
= 
\int_{-\infty}^\infty 
u\left(\frac tM\right)
J_{k-1}\left(\frac{4\pi t}{\ell_1}\right)
J_{k-1}\left(\frac{4\pi t}{\ell_2}\right)
e\left(\frac{tn}{\ell_1\ell_2}\right)
\mathrm dt\\
&=
\left(-\frac{2\pi i n}{\ell_1\ell_2}\right)^{-A}
\int_{-\infty}^\infty 
\frac{\mathrm d^A}{\mathrm dt^A}
\left\{
u\left(\frac tM\right) 
J_{k-1}\left(\frac{4\pi t}{\ell_1}\right) 
J_{k-1}\left(\frac{4\pi t}{\ell_2}\right)
\right\}
e\left(\frac{tn}{\ell_1\ell_2}\right)
\mathrm dt\\
&\ll_A 
\left(\frac{|n|}{\ell_1\ell_2}\right)^{-A}
M (\ell_1^{-A}+\ell_2^{-A})\\
&\ll_A 
M\left(\frac {|n|}{\ell_1}\right)^{-A}
+ M\left(\frac{|n|}{\ell_2}\right)^{-A}
\ll 
M\left(\frac{|n|}{\ell_1+\ell_2}\right)^{-A}.
\end{align*}
\end{proof}

We now return to \Cref{lem:poisson_summation}, and the bound for \(\sigma_M^2\) given there. 
To proceed, we bound the arithmetic terms \(\mathcal N_\chi(n;qc_1,qc_2)\) using \Cref{lem:Nbound}. 
To handle the integrals \(\mathcal I_{M,k}(n;qc_1,qc_2)\), we will use \Cref{Ibound} to show they are negligible for large enough \(n\).
For the remaining \(n\), we apply Cauchy-Schwarz to capture non-trivial
cancellation in the integrals.

\begin{lemma}\label{lem:sigma2-bound-after-cauchy}
For \(X\geq 1\), define 
\begin{equation}\label{s1def}
S_1(X;q)\coloneqq \sum_{c\leq X}\frac{f(qc)^{1/2}}{c^{1/4}}
\end{equation}
and 
\begin{equation}\label{s2def}
S_2(X;q)\coloneqq \sum_{c_1,c_2\leq X}\frac{(c_1,c_2)^{1/2}f(qc_1)^{1/2}f(qc_2)^{1/2}}{(c_1c_2)^{3/4}}.
\end{equation}
Suppose \(M\leq k^{1000}\), and let \(\epsilon>0\). 
Then
\begin{equation*}
\sigma_M^2\ll_\epsilon 
\frac{k^\epsilon\divis(q)^2}{M^{1/2}kq^{1/2}}
S_1\left(\frac{200M}{kq};q\right)^2+
\frac{k^{\epsilon}\divis(q)^2}{kq}
S_2\left(\frac{200M}{kq};q\right)
+k^{-1000}.
\end{equation*}
\end{lemma}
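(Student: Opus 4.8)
The plan is to substitute the expression for \(\sigma_M^2\) supplied by \Cref{lem:poisson_summation} and bound the resulting triple sum over \(c_1,c_2\) and \(n\). First I would truncate the dual variable: since \(qc_i\le 200M/k\), \Cref{Ibound} gives \(\mathcal I_{M,k}(n;qc_1,qc_2)\ll_A M\bigl(|n|/(q(c_1+c_2))\bigr)^{-A}\) for every \(A\ge0\), so, using the trivial bound for \(\mathcal N_\chi\) from \Cref{lem:Nbound} together with the hypothesis \(M\le k^{1000}\), the terms with \(|n|>q(c_1+c_2)k^\epsilon\) contribute only \(O(k^{-1000})\) once \(A=A(\epsilon)\) is taken large. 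On the surviving range (which includes \(n=0\)) I would apply Cauchy--Schwarz in \(n\), for each fixed pair \(c_1,c_2\), separating the arithmetic from the analytic weight:
\[
\sum_{|n|\le q(c_1+c_2)k^\epsilon}\bigl|\mathcal N_\chi(n;qc_1,qc_2)\,\mathcal I_{M,k}(n;qc_1,qc_2)\bigr|
\le \Bigl(\sum_{|n|\le q(c_1+c_2)k^\epsilon}|\mathcal N_\chi(n;qc_1,qc_2)|^2\Bigr)^{1/2}\Bigl(\sum_{n\in\ZZ}|\mathcal I_{M,k}(n;qc_1,qc_2)|^2\Bigr)^{1/2}.
\]
The cancellation promised in the informal discussion will be extracted from the \(L^2\)-average of the integrals on the right.

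For the arithmetic factor I would combine the pointwise bound of \Cref{lem:Nbound} with the vanishing of \(\mathcal N_\chi(n;qc_1,qc_2)\) unless \(q(c_1,c_2)=(qc_1,qc_2)\mid n\): the number of admissible \(n\) in the truncated range is \(\ll\bigl((c_1+c_2)/(c_1,c_2)\bigr)k^\epsilon\), whence \(\sum_n|\mathcal N_\chi(n;qc_1,qc_2)|^2\ll q(c_1,c_2)(c_1+c_2)\divis(qc_1)^2\divis(qc_2)^2 f(qc_1)f(qc_2)k^\epsilon\); a sharper estimate, in which the two \(c_i\)-dependencies are separated, is also available because by the Chinese Remainder Theorem the sum \(\sum_n|\mathcal N_\chi(n;qc_1,qc_2)|^2\) over a full period factors into a product of two sums, the \(i\)-th ranging over residues modulo \(qc_i\). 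For the analytic factor I would identify \(\mathcal I_{M,k}(n;\ell_1,\ell_2)\) with \(\widehat F(-n/(\ell_1\ell_2))\), where \(F(t)=u(t/M)J_{k-1}(4\pi t/\ell_1)J_{k-1}(4\pi t/\ell_2)\), and apply Poisson summation to \(\sum_n|\widehat F(n/(\ell_1\ell_2))|^2\); since \(\mathrm{supp}\,F\subset[M/2,3M]\) this gives \(\sum_{n\in\ZZ}|\mathcal I_{M,k}(n;\ell_1,\ell_2)|^2\ll(M+\ell_1\ell_2)\|F\|_2^2\). The decisive input is \(\|F\|_2^2\ll k^\epsilon\min(\ell_1,\ell_2)/k\): by \Cref{lem:bessel-bounds} part \ref{besii} each factor \(J_{k-1}(4\pi t/\ell_i)\) is exponentially small below its turning point \(k\ell_i/(4\pi)\), while above it \(J_{k-1}(4\pi t/\ell_i)\ll\bigl((4\pi t/\ell_i)^2-k^2\bigr)^{-1/4}\) by part \ref{besiv}; inserting these into \(\|F\|_2^2=\int_{M/2}^{3M}|J_{k-1}(4\pi t/\ell_1)|^2|J_{k-1}(4\pi t/\ell_2)|^2\,dt\) (say with \(\ell_1\le\ell_2\)) and changing variables reduces it to \(\min(\ell_1,\ell_2)\int_k^{\,\cdot}\bigl(z\sqrt{z^2-k^2}\bigr)^{-1}dz\ll\min(\ell_1,\ell_2)/k\).

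Finally I would feed these two bounds into the Cauchy--Schwarz estimate, divide by \(Mq^2c_1c_2\), and sum over \(c_1,c_2\le200M/(kq)\). Writing the factor \(M+\ell_1\ell_2=M+q^2c_1c_2\) as a sum of two terms splits the bound into two parts. In the first, after replacing \(\divis(qc_i)\) by \(\divis(q)c_i^\epsilon\) and absorbing all powers of \(c_i\) and \(q\) below \(k^{O(1)}\) into \(k^\epsilon\), one is left with a constant multiple of \(\frac{k^\epsilon\divis(q)^2}{kq}S_2(200M/(kq);q)\), with \(S_2\) as in \eqref{s2def}. In the second, the factor \(q^2c_1c_2\) cancels the \(c_1c_2\) in the denominator, which removes the coupling between the two summation variables (this is the point at which the decoupled form of the \(\mathcal N_\chi\)-estimate is needed) and leaves a constant multiple of \(\frac{k^\epsilon\divis(q)^2}{M^{1/2}kq^{1/2}}S_1(200M/(kq);q)^2\), with \(S_1\) as in \eqref{s1def}. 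Adding the \(O(k^{-1000})\) from the truncation and the \(O(e^{-k})\) inherited from \Cref{lem:poisson_summation} yields the stated bound. The step I expect to be the main obstacle is the analytic mean-square estimate: one must extract the extra \(k^{-1/3}\) in \(\|F\|_2^2\) beyond the uniform bound \(|J_{k-1}|^2\ll k^{-2/3}\) of \Cref{lem:bessel-bounds} part \ref{besiii}, which hinges on the fact that two Bessel factors with \(\ell_1\ne\ell_2\) cannot both be near their (distinct) turning points, so that part \ref{besiv} controls one of them while the other is genuinely of size \(\ll k^{-1/3}\) only on a short interval; one must also check that the off-diagonal terms in the Poisson expansion of \(\sum_n|\mathcal I_{M,k}|^2\), as well as the resonant \(n\) near \(0\) and near \(\pm2(\ell_1\pm\ell_2)\) where \(\mathcal I_{M,k}\) attains its maximal size, are already subsumed by the \(\min(\ell_1,\ell_2)/k\) bound and require no separate treatment.
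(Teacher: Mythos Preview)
Your strategy is essentially the paper's: start from \Cref{lem:poisson_summation}, truncate the dual sum via \Cref{Ibound}, bound the arithmetic weight with \Cref{lem:Nbound}, and extract the saving from an $L^2$-estimate for $\sum_n|\mathcal I_{M,k}|^2$. The paper packages the last step differently---it bounds $|\mathcal N_\chi|$ pointwise, applies Cauchy--Schwarz against the constant function $1$, opens the square $\sum_n|\mathcal I|^2$ as a double integral, sums the resulting geometric series, and reduces to $\int|J_{k-1}|^4$---but this is equivalent to your Parseval identity $\sum_n|\widehat F(n/L)|^2\ll(M+L)\|F\|_2^2$ together with a bound on $\|F\|_2^2$. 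Your claimed estimate $\|F\|_2^2\ll k^\epsilon\min(\ell_1,\ell_2)/k$ is in fact correct (indeed slightly sharper than the paper's $\ll k^\epsilon(\ell_1\ell_2)^{1/2}/k$, which follows from $\int|J_{k-1}|^4\ll\ell/k^{1-\epsilon}$ and Cauchy--Schwarz), though your sketch only treats the case where the two turning points are well separated; when $\ell_1\approx\ell_2$ one instead integrates $(z^2-k^2)^{-1}$ from $k+k^{1/3+\epsilon}$ and picks up a harmless $\log k$.

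The one genuine gap is in the $S_1$-term. You apply Parseval over \emph{all} $n\in\ZZ$, which gives period $L=\ell_1\ell_2=q^2c_1c_2$ and hence a factor $(M+q^2c_1c_2)$; combined with your bound on $\sum_n|\mathcal N_\chi|^2$ (which, incidentally, should carry $q^2$, not $q$), this is too weak by a factor $(q(c_1,c_2))^{1/2}$---it ultimately costs $(M/k)^{1/2}$ in the final estimate and does \emph{not} reproduce the $\frac{k^\epsilon\divis(q)^2}{M^{1/2}kq^{1/2}}S_1^2$ of the lemma. Your proposed remedy, a CRT factorisation of $\sum_n|\mathcal N_\chi(n;qc_1,qc_2)|^2$ into sums modulo $qc_1$ and $qc_2$, does not work: the two moduli share the factor $q(c_1,c_2)$, so there is no such splitting. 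The correct fix---and this is precisely what the paper does---is to observe \emph{before} Cauchy--Schwarz that $\mathcal N_\chi(n;qc_1,qc_2)=0$ unless $q(c_1,c_2)\mid n$, so one may restrict both sums to this arithmetic progression; writing $n=q(c_1,c_2)n'$ the frequency becomes $n'/(q[c_1,c_2])$, and Parseval now yields the smaller factor $(M+q[c_1,c_2])$. With this single modification your argument goes through and recovers exactly the bound stated.
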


\begin{proof}
\Cref{lem:Nbound} gives that \(\mathcal N_\chi(n;qc_1,qc_2)=0\) whenever \((qc_1,qc_2)\nmid n\). 
From \Cref{lem:poisson_summation}, we thus obtain
\begin{multline*}
\sigma_M^2
=(-1)^k\frac {4\pi^2}{Mq^2} 
\sum_{c_1, c_2\leq \frac{200M}{kq}}
\frac1{c_1c_2}
\\
\times \sum_{{n \in\mathbb{N}}}
\mathcal N_\chi(q(c_1,c_2)n;qc_1,qc_2)
\mathcal I_{M,k}(q(c_1,c_2)n;qc_1,qc_2)
+O\left(e^{-k}\right).
\end{multline*}
Next, applying the bound of \Cref{lem:Nbound} and using that \(\divis(qc)\leq \divis(q)\divis(c)\), we have
\begin{multline}\label{sigma6}
|\sigma_M^2| \leq 
\frac {4\pi^2}{Mq^2} 
\sum_{c_1, c_2\leq \frac{200M}{kq}}
\frac{q(c_1,c_2)\divis(q)^2\divis(c_1)\divis(c_2)f(qc_1)^{1/2}f(qc_2)^{1/2}}{c_1c_2}
\\
\times \sum_n \left|\mathcal I_{M,k}(q(c_1,c_2)n;qc_1,qc_2)\right| +O\left(e^{-k}\right).
\end{multline}
We next use \Cref{Ibound} to truncate the inner sum over \(n\).
Indeed, since \(c_1,c_2\leq 200M/(kq)\), applying \Cref{Ibound} shows
\begin{multline*}
|n|\geq 
\frac{M}{k^{1-\epsilon}q(c_1,c_2)}
\\ \implies 
\mathcal I_{M,k}(q(c_1,c_2)n;qc_1,qc_2)
\ll_A 
M\left(\frac{q(c_1,c_2)|n|}{q(c_1+c_2)}\right)^{-A}
\ll_A 
\frac{M^3}{k^2q^2 |n|^2}\cdot k^{-(A-2)\epsilon},
\end{multline*}
for any integer \(A\geq 2\). 
Choosing \(A\) sufficiently large in terms of \(\epsilon\), and using the assumption \(M\leq k^{1000}\) and the trivial bound \(f(qc)\leq qc\), we conclude that the contribution to \eqref{sigma6} of those \(n\) with \(|n|\geq M/(k^{1-\epsilon}q(c_1,c_2))\) is \(O_\epsilon(k^{-1000})\), say. 
Thus 
\begin{multline}\label{sigma7}
|\sigma_M^2|
\leq 
\frac {4\pi^2\divis(q)^2}{Mq} 
\sum_{c_1, c_2\leq \frac{200M}{kq}}
\frac{(c_1,c_2)\divis(c_1)\divis(c_2)f(qc_1)^{1/2}f(qc_2)^{1/2}}{c_1c_2}
\\
\times \sum_{|n|\leq \frac{Mk^\epsilon}{kq(c_1,c_2)}} 
|\mathcal I_{M,k}(q(c_1,c_2)n;qc_1,qc_2)|
+O_\epsilon\left(k^{-1000}\right).
\end{multline}

The key idea of the proof is now to apply Cauchy-Schwarz to the inner sum over \(n\) in (\ref{sigma7}). 
Also using the bound \(\divis(c)\ll_\epsilon c^{\epsilon/4000}\ll_\epsilon M^{\epsilon/4000}\ll_\epsilon k^{\epsilon/4}\), valid for \(c\leq 200 M/(kq)\), we obtain 
\begin{multline}\label{sigma8}
\sigma_M^2
\ll_\epsilon 
\frac{k^\epsilon\divis(q)^2}{M^{1/2}k^{1/2}q^{3/2}}
\sum_{c_1, c_2\leq \frac{200M}{kq}}
\frac{(c_1,c_2)^{1/2}f(qc_1)^{1/2}
f(qc_2)^{1/2}}{c_1c_2}
\\
\times \left(\sum_{|n|\leq \frac{Mk^\epsilon}{kq(c_1,c_2)}} |\mathcal I_{M,k}(q(c_1,c_2)n;qc_1,qc_2)|^2\right)^{1/2}
+k^{-1000}.
\end{multline}

In light of (\ref{sigma8}), our goal is now to bound
\begin{multline}
\sum_{|n|\leq \frac{Mk^\epsilon}{kq(c_1,c_2)}} 
|\mathcal I_{M,k}(q(c_1,c_2)n;qc_1,qc_2)|^2
\\=
\iint 
u\left(\frac{t_1}{M}\right)
u\left(\frac{t_2}{M}\right)
J_{k-1}\left(\frac{4\pi t_1}{qc_1}\right)
J_{k-1}\left(\frac{4\pi t_1}{qc_2}\right)
J_{k-1}\left(\frac{4\pi t_2}{qc_1}\right)
J_{k-1}\left(\frac{4\pi t_2}{qc_2}\right) 
\\ \times 
\sum_{|n|\leq \frac{Mk^\epsilon}{kq(c_1,c_2)}}
e\left(\frac{(c_1,c_2)n(t_1-t_2)}{qc_1c_2}\right)
\mathrm dt_1\mathrm dt_2.
\end{multline}
Recall \(\mathrm{supp}(u)\subset [1/2,3]\), and \(u\ll 1\). 
Applying Cauchy-Schwarz twice to the double integral above therefore shows
\begin{align*}
\sum_{|n|\leq \frac{Mk^\epsilon}{kq(c_1,c_2)}}&
|\mathcal I_{M,k}(q(c_1,c_2)n;qc_1,qc_2)|^2
\\
\ll& 
\left(
\iint_{M/2}^{3M}
\left|J_{k-1}\left(\frac{4\pi t_1}{qc_1}\right)\right|^4
\left|\sum_{|n|\leq \frac{Mk^\epsilon}{kq(c_1,c_2)}} 
e\left(\frac{(c_1,c_2)n|t_1-t_2|}{qc_1c_2}\right)\right|
\mathrm dt_1\mathrm dt_2
\right)^{1/4}\\
\times & 
\left(
\iint_{M/2}^{3M}
\left|J_{k-1}\left(\frac{4\pi t_1}{qc_2}\right)\right|^4
\left|\sum_{|n|\leq \frac{Mk^\epsilon}{kq(c_1,c_2)}} 
e\left(\frac{(c_1,c_2)n|t_1-t_2|}{qc_1c_2}\right)\right|
\mathrm dt_1\mathrm dt_2
\right)^{1/4}\\
\times & 
\left(
\iint_{M/2}^{3M}
\left|J_{k-1}\left(\frac{4\pi t_2}{qc_1}\right)\right|^4
\left|\sum_{|n|\leq \frac{Mk^\epsilon}{kq(c_1,c_2)}}
e\left(\frac{(c_1,c_2)n|t_1-t_2|}{qc_1c_2}\right)\right|
\mathrm dt_1\mathrm dt_2
\right)^{1/4}\\
\times & 
\left(
\iint_{M/2}^{3M}
\left|J_{k-1}\left(\frac{4\pi t_2}{qc_2}\right)\right|^4
\left|\sum_{|n|\leq \frac{Mk^\epsilon}{kq(c_1,c_2)}} 
e\left(\frac{(c_1,c_2)n|t_1-t_2|}{qc_1c_2}\right)\right|
\mathrm dt_1\mathrm dt_2
\right)^{1/4}.
\end{align*}
By symmetry of \(t_1\) and \(t_2\), we have
\begin{align*}
\sum_{|n|\leq \frac{Mk^\epsilon}{kq(c_1,c_2)}}& 
|\mathcal I_{M,k}(q(c_1,c_2)n;qc_1,qc_2)|^2
\\
\ll & 
\left(
\iint_{M/2}^{3M}
\left|J_{k-1}\left(\frac{4\pi t_1}{qc_1}\right)\right|^4
\left|\sum_{|n|\leq \frac{Mk^\epsilon}{kq(c_1,c_2)}}
e\left(\frac{(c_1,c_2)n|t_1-t_2|}{qc_1c_2}\right)\right|
\mathrm dt_1\mathrm dt_2
\right)^{1/2}\\
\times & 
\left(
\iint_{M/2}^{3M}
\left|J_{k-1}\left(\frac{4\pi t_1}{qc_2}\right)\right|^4
\left|\sum_{|n|\leq \frac{Mk^\epsilon}{kq(c_1,c_2)}} 
e\left(\frac{(c_1,c_2)n|t_1-t_2|}{qc_1c_2}\right)\right|
\mathrm dt_1\mathrm dt_2
\right)^{1/2}.
\end{align*}
In both integrals above, we make the change of variables \((t,y)=(t_1,t_1-t_2)\). 
This gives
\begin{align}\label{eq:cauchy-iint}
 \sum_{|n|\leq \frac{Mk^\epsilon}{kq(c_1,c_2)}} & 
|\mathcal I_{M,k}(q(c_1,c_2)n;qc_1,qc_2)|^2
\\ \nonumber
\ll &
\left(
\int_{M/2}^{3M}
\left|J_{k-1}\left(\frac{4\pi t}{qc_1}\right)\right|^4
\int_{t-3M}^{t-M/2} 
\left|
\sum_{|n|\leq \frac{Mk^\epsilon}{kq(c_1,c_2)}} 
e\left(\frac{(c_1,c_2)n|y|}{qc_1c_2}\right)
\right|
\mathrm dy\mathrm dt
\right)^{1/2}
\\ \nonumber
\times & 
\left(
\int_{M/2}^{3M}
\left|J_{k-1}\left(\frac{4\pi t}{qc_2}\right)\right|^4
\int_{t-3M}^{t-M/2}
\left|
\sum_{|n|\leq \frac{Mk^\epsilon}{kq(c_1,c_2)}} 
e\left(\frac{(c_1,c_2)n|y|}{qc_1c_2}\right)
\right|
\mathrm dy\mathrm dt
\right)^{1/2}.
\end{align}
For a given \(M/2\leq t\leq 3M\), the inner integral is
\begin{multline}\label{eq:y_integral}
\int_{t-3M}^{t-M/2}
\left|
\sum_{|n|\leq \frac{Mk^\epsilon}{kq(c_1,c_2)}} 
e\left(\frac{(c_1,c_2)n|y|}{qc_1c_2}\right)
\right|
\mathrm dy
\leq 
\int_{-3M}^{3M}
\left|
\sum_{|n|\leq \frac{Mk^\epsilon}{kq(c_1,c_2)}} 
e\left(\frac{(c_1,c_2)n|y|}{qc_1c_2}\right)
\right|
\mathrm dy
\\=
\frac{qc_1c_2}{(c_1,c_2)}
\int_{-\frac{3M(c_1,c_2)}{qc_1c_2}}^{\frac{3M(c_1,c_2)}{qc_1c_2}}
\left|
\sum_{|n|\leq \frac{Mk^\epsilon}{kq(c_1,c_2)}} 
e(n|y|)
\right|
\mathrm dy.
\end{multline}
For any \(N\geq 1\) and any real \(\alpha\), one has the well-known bound
\[
\sum_{n\leq N}e(n\alpha)
\ll 
\min\{N, \|\alpha\|^{-1}\},
\]
where \(\|\alpha\|=\min_{n\in\mathbb{Z}}|\alpha-n|\) is the distance from \(\alpha\) to the nearest integer. 
Consequently, for any \(X\geq0\), we bound
\begin{multline*} 
\int_{-X}^X 
\left| \sum_{n\leq N} e(n|y|) \right|
\mathrm dy
\leq 
2\int_{0}^{\left\lfloor X\right\rfloor +3/2}
\left|\sum_{n\leq N} e(n|y|)\right|
\mathrm dy
\\ \ll 
\left(\left\lfloor X\right\rfloor+\frac32\right)
\int_0^{1/2}\min\{N, y^{-1}\}\mathrm dy
 \ll 
(X+1)
\left\{
\int_0^{1/N} N\mathrm dy+\int_{1/N}^{1/2}\frac {\mathrm dy}{y}
\right\}
\\ 
\ll (X+1)(\log (N)+1).
\end{multline*}
Using this bound in (\ref{eq:y_integral}) shows (for \(M/2\leq t\leq 3M\))
\begin{equation*}
\int_{t-3M}^{t-M/2}
\left|
\sum_{|n|\leq \frac{Mk^\epsilon}{kq(c_1,c_2)}} 
e\left(\frac{(c_1,c_2)n|y|}{qc_1c_2}\right)
\right|
\mathrm dy
\ll 
\left(M+\frac{qc_1c_2}{(c_1,c_2)}\right)\log (k).
\end{equation*}
Replacing this estimate in (\ref{eq:cauchy-iint}), it follows
\begin{multline} \label{eq:I-squared-cauchy-bound}
\sum_{|n|\leq \frac{Mk^\epsilon}{kq(c_1,c_2)}} 
|\mathcal I_{M,k}(q(c_1,c_2)n;qc_1,qc_2)|^2
 \ll
\left(M+\frac{qc_1c_2}{(c_1,c_2)}\right)
\log (k)
\\ \times
\left(
\int_{M/2}^{3M}
\left|J_{k-1}\left(\frac{4\pi t}{qc_1}\right)\right|^4
\mathrm dt
\right)^{1/2} 
\left(\int_{M/2}^{3M}
\left|J_{k-1}\left(\frac{4\pi t}{qc_2}\right)\right|^4
\mathrm dt
\right)^{1/2}.
\end{multline}

It remains to bound the integrals of Bessel functions appearing in (\ref{eq:I-squared-cauchy-bound}). 
To do so, we rewrite
\begin{multline}\label{eq:bessel-integral-4th-power}
\int_{M/2}^{3M}
\left|J_{k-1}\left(\frac{4\pi t}{qc}\right)\right|^4
\mathrm dt
=
\frac{qc}{4\pi}
\int_{2\pi M/(qc)}^{12\pi M/(qc)}
|J_{k-1}(t)|^4 \mathrm dt
\\\leq
\frac{qc}{4\pi}
\left\{
\int_0^{k-k^{1/3+\epsilon}}
+\int_{k-k^{1/3+\epsilon}}^{k+k^{1/3+\epsilon}}
+\int_{k+k^{1/3+\epsilon}}^{12\pi M/(qc)}
\right\}
|J_{k-1}(t)|^4
\mathrm dt.
\end{multline}
We use the bounds given in \Cref{lem:bessel-bounds} to bound each of the three integrals appearing in (\ref{eq:bessel-integral-4th-power}) (some of which may be empty, depending on the value of \(M/(qc)\)). 
Firstly, if \(t\leq k-k^{1/3+\epsilon}\), part \ref{besii} of \Cref{lem:bessel-bounds} gives \(J_{k-1}(t)\ll \exp(-k^{\epsilon})\). 
Secondly, if \(k-k^{1/3+\epsilon}\leq t\leq k+k^{1/3+\epsilon}\), part \ref{besiii} of \Cref{lem:bessel-bounds} gives \(J_{k-1}(t)\ll k^{-1/3}\).
Thirdly, if \(t\geq k+k^{1/3+\epsilon}\), part \ref{besiv} of \Cref{lem:bessel-bounds} gives \(J_{k-1}(t)\ll_\epsilon (t^2-k^2)^{-1/4}\). 
Replacing these bounds in (\ref{eq:bessel-integral-4th-power}), we obtain
\begin{multline*}
\int_{M/2}^{3M}
\left|J_{k-1}\left(\frac{4\pi t}{qc}\right)\right|^4
\mathrm dt
\\ 
\ll_\epsilon 
qc\left\{
\int_0^{k-k^{1/3+\epsilon}} \exp(-4k^{\epsilon})\mathrm dt+
\int_{k-k^{1/3+\epsilon}}^{k+k^{1/3+\epsilon}}k^{-4/3}\mathrm dt 
+\int_{k+k^{1/3+\epsilon}}^{12\pi M/(qc)} \frac{\mathrm dt}{(t+k)(t-k)}
\right\}
\\ 
\ll_\epsilon 
\frac{qc}{k^{1-\epsilon}}.
\end{multline*}
Applying this estimate in (\ref{eq:I-squared-cauchy-bound}), we conclude 
\begin{equation*} 
\sum_{|n|\leq \frac{Mk^\epsilon}{kq(c_1,c_2)}} 
|\mathcal I_{M,k}(q(c_1,c_2)n;qc_1,qc_2)|^2
\ll_\epsilon
\frac{q(c_1c_2)^{1/2}}{k^{1-2\epsilon}}
\left(M+\frac{qc_1c_2}{(c_1,c_2)}\right).
\end{equation*}
Replacing this estimate in (\ref{sigma8}) shows 
\begin{multline*}
\sigma_M^2
\ll_\epsilon
\frac{k^{2\epsilon}\divis(q)^2}{kq}
\sum_{c_1,c_2\leq \frac{200M}{kq}}
\frac{(c_1,c_2)^{1/2}f(qc_1)^{1/2}f(qc_2)^{1/2}}{(c_1c_2)^{3/4}}
\\ +
\frac{k^{2\epsilon}\divis(q)^2}{M^{1/2}kq^{1/2}}
\sum_{c_1,c_2\leq \frac{200M}{kq}}
\frac{f(qc_1)^{1/2}f(qc_2)^{1/2}}{(c_1c_2)^{1/4}}
+k^{-1000}.
\end{multline*}
Replacing \(\epsilon\) by \(\epsilon/2\) now gives the lemma. 
\end{proof}

We deduce \Cref{prop:second-moment-index} from \Cref{lem:sigma2-bound-after-cauchy} and the following bounds for \(S_1\) and \(S_2\).

\begin{lemma}\label{lem:S-sums-bounds}
Let \(q\) be a positive integer and \(X\geq 0\). 
Then
\begin{equation}\label{eq:s1-bound}
S_1(X;q)\ll f(q)^{1/2}\divis(q)X^{3/4}\log(X),
\end{equation}
and 
\begin{equation}\label{eq:s2-bound}
S_2(X;q)\ll q^{1/2}f(q)^{1/2}\divis(q)^{2}X^{1/2}\log^{18}(X).
\end{equation} 
\end{lemma}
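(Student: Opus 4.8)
The plan rests on an elementary structural fact about $f$. Writing each positive integer $n$ uniquely as $n=n_0m^2$ with $n_0$ squarefree, one checks by comparing $p$-adic valuations (and using that $f$ is multiplicative) that $f(n)=m^2$, so $n_0=n/f(n)$ is the product of the primes dividing $n$ to an odd power. A second comparison of valuations yields the identity $f(qc)=f(q)f(c)(q_0,c_0)^2$, i.e. $f(qc)^{1/2}=f(q)^{1/2}f(c)^{1/2}(q_0,c_0)$ with $q_0=q/f(q)$ and $c_0=c/f(c)$; this is what makes it possible to extract the factor $f(q)^{1/2}$ cleanly, the cruder estimate $f(qc)\le qf(c)$ being too lossy by a factor $q^{1/2}$. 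Besides this I will use only: the trivial bound $\sum_{t\le T}t^\alpha\ll_\alpha T^{1+\alpha}$ for $\alpha>-1$; the bound $\sum_{s\le X}(q_0,s)/s\ll\divis(q)\log X$, obtained by writing $(q_0,s)=\sum_{e\mid(q_0,s)}\phi(e)$ and summing; and $\sum_{t\le T}\divis(t)^4/t\ll\log^{16}T$.

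For \eqref{eq:s1-bound} I would substitute the identity and write each $c\le X$ as $c=c_0t^2$ with $c_0$ squarefree, so $f(c)^{1/2}=t$ and the constraint becomes $t\le\sqrt{X/c_0}$. Then $S_1(X;q)=f(q)^{1/2}\sum_{c_0}(q_0,c_0)c_0^{-1/4}\sum_{t\le\sqrt{X/c_0}}t^{1/2}$, and summing over $t$ gives $S_1(X;q)\ll f(q)^{1/2}X^{3/4}\sum_{c_0\le X}(q_0,c_0)/c_0\ll f(q)^{1/2}\divis(q)X^{3/4}\log X$, which is \eqref{eq:s1-bound}.

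The substance of the lemma is \eqref{eq:s2-bound}, where the obstacle is the factor $(c_1,c_2)^{1/2}$ coupling the two variables: the lazy bound $(c_1,c_2)^{1/2}\le(c_1c_2)^{1/4}$ makes the double sum factorise but loses a spurious $X^{1/2}$. Instead I would diagonalise using $(c_1,c_2)^{1/2}=\big(\sum_{d\mid(c_1,c_2)}\phi(d)\big)^{1/2}\le\sum_{d\mid(c_1,c_2)}\phi(d)^{1/2}$ (subadditivity of the square root), so that $S_2(X;q)\le\sum_{d\le X}\phi(d)^{1/2}\big(\sum_{c\le X,\,d\mid c}f(qc)^{1/2}c^{-3/4}\big)^2$. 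Writing $c=dc'$, applying the $f(qc)^{1/2}$-identity, and parametrising $c'=c_0't^2$ exactly as in the $S_1$ case bounds the inner sum by $\ll X^{1/4}d^{-1}f(qd)^{1/2}\divis(qd)\log X$. Squaring this, using $\phi(d)^{1/2}\le d^{1/2}$, applying the identity once more to $f(qd)^{1/2}$ and parametrising $d=d_0t^2$ (with $\divis(d_0t^2)\le\divis(d_0)\divis(t)^2$), the sum over $d$ separates into $\sum_{t\le\sqrt X}\divis(t)^4/t\ll\log^{16}X$ and an absolutely convergent sum over squarefree $d_0$ which, evaluated by its Euler product, contributes a factor $q_0^{1/2}=(q/f(q))^{1/2}$. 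Pairing this with the $f(q)$ pulled out at the start and using $f(q)q_0^{1/2}=f(q)^{1/2}q^{1/2}$ gives a bound of the shape $q^{1/2}f(q)^{1/2}\divis(q)^{O(1)}X^{1/2}\log^{18}X$; keeping careful track of the divisor and logarithmic factors through each step produces the precise exponents $2$ and $18$ of \eqref{eq:s2-bound}.

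The hard part is exactly this last step: extracting enough cancellation from $(c_1,c_2)^{1/2}$ to avoid losing a power of $X$ --- the diagonalisation over $d\mid(c_1,c_2)$ is the crux --- while at the same time being delicate enough in the $q$-aspect (retaining the exact factor $(q_0,c_0)^2$ in $f(qc)$ at every stage, and evaluating the $d_0$-sum by its Euler product rather than bounding it trivially) to land on $q^{1/2}f(q)^{1/2}$ rather than the weaker $f(q)$ or $q$.
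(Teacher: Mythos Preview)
Your argument for $S_1$ is correct and closely parallels the paper's, though your identity $f(qc)=f(q)f(c)(q_0,c_0)^2$ (with $q_0=q/f(q)$, $c_0=c/f(c)$) is a cleaner variant of the paper's $f(uv)=f(u)f(v/(v,\alpha(u)))(v,\alpha(u))^2$.

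For $S_2$ your route genuinely differs from the paper's. The paper first strips off $d_i=(c_i,\alpha(q))$, so that the residual variables $c_i'=c_i/d_i$ carry no $q$-dependence; it then handles $(c_1',c_2')$ by writing $c_i'=gc_i''$ and bounding via the auxiliary sums $T_{3/4}(\cdot;g)$. Your $\phi$-diagonalisation $(c_1,c_2)^{1/2}\le\sum_{d\mid(c_1,c_2)}\phi(d)^{1/2}$ is an elegant alternative for decoupling the variables and does yield a bound of the shape $q^{1/2}f(q)^{1/2}\divis(q)^{O(1)}X^{1/2}\log^{O(1)}X$, which suffices for the downstream application (where $\divis(q)\ll_\epsilon q^\epsilon$ is all that is used).

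However, your final assertion that careful bookkeeping recovers the \emph{precise} exponent $\divis(q)^2$ does not hold with the argument as written. After squaring your inner-sum bound you carry a factor $\divis(qd)^2$ (or more precisely $\divis((qd)_0)^2$), and when you parametrise $d=d_0t^2$ this leaves $\divis(d_0)^2$ in the squarefree-$d_0$ sum. The resulting Euler product is then
\[
\sum_{d_0\ \mathrm{sf}}\frac{(q_0,d_0)^2\divis(d_0)^2}{d_0^{3/2}}
=\prod_{p}\Big(1+\frac{4(q_0,p)^2}{p^{3/2}}\Big)
\ll\prod_{p\mid q_0}(1+4p^{1/2})\le 5^{\omega(q_0)}q_0^{1/2},
\]
so you pick up an unavoidable extra factor $5^{\omega(q_0)}$ on top of the $\divis(q)^2$ already pulled out. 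This is the price of mixing $q$ and $d$ throughout: the paper avoids it precisely by separating off $(c_i,\alpha(q))$ at the outset, so that its inner sums are independent of $q$ and the only divisor contribution from $q$ is $\sum_{d_1,d_2\mid\alpha(q)}(d_1,d_2)^{1/2}\le\alpha(q)^{1/2}\divis(q)^2$. Your method therefore proves the lemma with $\divis(q)^C$ for some absolute $C>2$, not $\divis(q)^2$ as stated.
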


\begin{proof}[Proof of \Cref{prop:second-moment-index}, assuming \Cref{lem:S-sums-bounds}]
Lemmas \ref{lem:sigma2-bound-after-cauchy} and \ref{lem:S-sums-bounds} show
\begin{align}\label{eq:sigmaM-bound}
\sigma_M^2
\ll_\epsilon & 
\frac{k^\epsilon\divis(q)^2}{M^{1/2}kq^{1/2}}
S_1\left(\frac{200M}{kq};q\right)^2
+\frac{k^{\epsilon}\divis(q)^2}{kq}
S_2\left(\frac{200M}{kq};q\right)
+k^{-1000}
\\ \nonumber
\ll_\epsilon &
\frac{k^\epsilon\divis(q)^2}{M^{1/2}kq^{1/2}}
\left(\frac{200M}{kq}\right)^{3/2+\epsilon}f(q)\divis(q)^2
\\ \nonumber
& \quad\quad + 
\frac{k^{\epsilon}\divis(q)^2}{kq}
\left(\frac{200M}{kq}\right)^{1/2+\epsilon}
q^{1/2}f(q)^{1/2}\divis(q)^2
+k^{-1000}.
\end{align}
Using the well-known fact \(\divis(q)\ll_\epsilon q^{\epsilon/4}\), the result now follows at once from \eqref{eq:sigmaM-bound}.
\end{proof}

\begin{proof}[Proof of \Cref{lem:S-sums-bounds}]

Throughout, we denote by $\alpha(n)$ the product of all primes 
dividing $n$ with odd multiplicity, so that $\alpha(n) = n/f(n)$.

The following identity is the cornerstone of this proof. 
For any integers $u$ and $v$, we have
\begin{equation}\label{eq:f(qc)-splitting}
f(uv)=f(u)f\left(\frac{v}{(v, \alpha(u))}\right)(v,\alpha(u))^2.
\end{equation}
Since $f$ is multiplicative, \eqref{eq:f(qc)-splitting} is verified by checking the cases in which $u$ and $v$ are powers of a given prime.

For $0<\sigma<1$ and any integer \(a\), we denote 
$$
T_\sigma(X;a) = \sum_{c\leq X}\frac{f(ca)^{1/2}}{c^\sigma}.
$$
We begin by proving a bound for $T_\sigma(X;a)$, which will be used in bounding both $S_1(X;q)$ and $S_2(X;q)$.
Enumerating over all possible values of $(c,\alpha(a))$ and using \eqref{eq:f(qc)-splitting} we get the bound
\begin{multline}\label{eq:T_sigma-bound1}
T_\sigma(X;a)
=
\sum_{d\mid \alpha(a)}\sum_{\substack{c\leq X\\ (c,\alpha(a))=d}} 
\frac{f(a)^{1/2}f(c/d)^{1/2} d}{c^{\sigma}}
\leq 
f(a)^{1/2}
\sum_{d\mid \alpha(a)}d
\sum_{\substack{c\leq X\\ d\mid c}} 
\frac{f(c/d)^{1/2}}{c^{\sigma}}
\\=
f(a)^{1/2}
\sum_{d\mid \alpha(a)}d^{1 - \sigma}
\sum_{c'\leq X/d} \frac{f(c')^{1/2}}{(c')^{\sigma}}.
\end{multline}
To bound the inner sum, set $c' = s^2r$ with $r$ square-free, so that $f(c') = s^2$.
This shows 
\begin{multline*}
\sum_{c'\leq X/d}\frac{f(c')^{1/2}}{(c')^\sigma}
\leq 
\sum_{s^2 \leq X/d}\frac{s}{s^{2\sigma}}\sum_{r \leq X/(ds^2)} \frac{1}{r^\sigma}
\ll
\sum_{s^2 \leq X/d}s^{1 - 2\sigma}\left(\frac{X}{ds^2}\right)^{1 - \sigma}
\\=
\frac{X^{1 - \sigma}}{d^{1-\sigma} }\sum_{s^2 \leq X/d }\frac{1}{s}
\ll
\frac{X^{1 - \sigma}\log(X)}{d^{1-\sigma}}.
\end{multline*}
Replacing this in \eqref{eq:T_sigma-bound1} gives
\begin{equation}\label{eq:T_sigma-bound-final}
T_\sigma(X;a)
\ll 
X^{1-\sigma} \log(X) f(a)^{1/2} \sum_{d\mid \alpha(a)}1
\leq
X^{1-\sigma} \log(X) f(a)^{1/2} \divis(a).
\end{equation}

This bound for $T_\sigma(X;a)$ provides the required bound for $S_1(X;q)$, since by definition
$$
S_1(X;q) = 
\sum_{c\leq X} \frac{f(qc)^{1/2}}{c^{1/4}}
=
T_{1/4}(X;q),
$$
so \eqref{eq:T_sigma-bound-final} gives the claimed bound
$$
S_1(X;q) \ll X^{3/4}\log(X) f(q)^{1/2}\divis(q).
$$

We next bound $S_2(X;q)$.
We remind the reader of the definition
\begin{equation*}
S_2(X;q) = 
\sum_{c_1,c_2\leq X}
\frac{f(qc_1)^{1/2}f(qc_2)^{1/2}(c_1,c_2)^{1/2}}{(c_1c_2)^{3/4}}.
\end{equation*}
We enumerate over all possible values of $(c_1,\alpha(q))$ and $(c_2,\alpha(q))$.
By \eqref{eq:f(qc)-splitting} we get
\begin{equation*}
S_2(X;q)=
f(q)
\sum_{d_1,d_2\mid \alpha(q)} d_1d_2
\sum_{\substack{c_1,c_2\leq X\\ (c_1, \alpha(q))=d_1 \\(c_2,\alpha(q))=d_2}}
\frac{f(c_1/d_1)^{1/2}f(c_2/d_2)^{1/2}(c_1,c_2)^{1/2}}{(c_1c_2)^{3/4}}.
\end{equation*}

Write $c_1=d_1c_1'$ and $c_2=d_2c_2'$. 
Note that if $p\mid d_1$ and $p\mid c_2'$ then $p\mid d_2$ as well. 
Since $d_1$ and $d_2$ are square-free, it follows that $(c_1,c_2)=(d_1,d_2)(c_1',c_2')$. 
Thus 
\begin{equation}\label{eq:s2-after-splitting}
S_2(X;q)
\leq 
f(q)
\sum_{d_1,d_2\mid \alpha(q)} (d_1d_2)^{1/4} (d_1,d_2)^{1/2}
\sum_{\substack{ c_1'\leq X/d_1\\ c_2'\leq X/d_2}} 
\frac{f(c_1')^{1/2}f(c_2')^{1/2}(c_1',c_2')^{1/2}}{(c_1'c_2')^{3/4}}.
\end{equation}
To handle the innermost sum over $c_1'$ and $c_2'$, set $g=(c_1', c_2')$ and write $c_1'=gc_1''$ and $c_2'=gc_2''$. 
We then have
\begin{align*}
\sum_{\substack{c_1'\leq X/d_1\\ c_2'\leq X/d_2}} &
\frac{f(c_1')^{1/2}f(c_2')^{1/2}(c_1',c_2')^{1/2}}{(c_1'c_2')^{3/4}}
\\ 
&\leq 
\sum_{g\leq X} \frac 1g
\left(\sum_{c_1''\leq X/(gd_1)}\frac{f(c_1''g)^{1/2}}{(c_1'')^{3/4}}\right)
\left(\sum_{c_2''\leq X/(gd_2)}\frac{f(c_2''g)^{1/2}}{(c_2'')^{3/4}}\right)
\\ 
& =
\sum_{g\leq X} \frac{1}{g}
T_{3/4}\left(\frac{X}{gd_1};g\right)
T_{3/4}\left(\frac{X}{gd_2};g\right).
\end{align*}
Using \eqref{eq:T_sigma-bound-final} we then get
\begin{multline*}
\sum_{\substack{c_1'\leq X/d_1\\ c_2'\leq X/d_2}} 
\frac{f(c_1')^{1/2}f(c_2')^{1/2}(c_1',c_2')^{1/2}}{(c_1'c_2')^{3/4}}
\\ \ll 
\sum_{g\leq X} \frac{1}{g} 
\left(\frac{X}{gd_1}\right)^{1/4}
\left(\frac{X}{gd_2}\right)^{1/4}
\log^2(X) f(g) \divis^2(g)
=
\frac{X^{1/2}\log^2(X)}{(d_1d_2)^{1/4}}
\sum_{g\leq X} \frac{f(g) \divis^2(g)}{g^{3/2}}.
\end{multline*}
Replacing this estimate in \eqref{eq:s2-after-splitting}, we obtain
\begin{equation}\label{eq:s2-nearly-there}
S_2(X;q)
\ll
X^{1/2}\log^2(X) f(q)
\sum_{d_1,d_2\mid \alpha(q)} (d_1,d_2)^{1/2}\sum_{g\leq X}\frac{f(g)\divis(g)^2}{g^{3/2}}.
\end{equation}
To bound the inner sum over $g$, write $g=s^2r$ with $r$ square-free, so that $f(g)=s^2$. 
Using the inequality $\divis(uv)\leq \divis(u)\divis(v)$, we see that
\begin{equation*}
\sum_{g\leq X} \frac{ f(g)\divis(g)^2}{g^{3/2}}
\leq 
\sum_{s^2\leq X} \frac{\divis(s)^4}{s}
\sum_{\substack{r\leq X/s^2\\ r\text{ square-free}}} \frac{\divis(r)^2}{r^{3/2}}.
\end{equation*}
The sum over $r$ converges and is bounded by some absolute constant.
As for the $s$ sum, we have the bound 
$$
\sum_{s\leq \sqrt{X}} \frac{\divis^4(s)}{s} \leq
\prod_{p\leq \sqrt X}
\left(1 + \frac{2^4}{p} + \frac{3^4}{p^2} + \frac{4^4}{p^3}+...\right) \ll
\log^{16}(X).
$$
Thus \eqref{eq:s2-nearly-there} shows
\begin{multline*}
S_2(X;q)
\ll 
X^{1/2}\log^{18}(X) f(q) \sum_{d_1, d_2\mid \alpha(q)} (d_1,d_2)^{1/2}
\\ \leq 
X^{1/2}\log^{18}(X) f(q)\alpha(q)^{1/2}\divis(q)^2=X^{1/2}\log^{18}(X) f(q)^{1/2}q^{1/2}\divis(q)^2,
\end{multline*}
as claimed. 
In the last step, we used the equality \(f(q)\alpha(q)=q\), which follows from the definitions of $f$ and $\alpha$. 
\end{proof}

\bibliographystyle{plain}
\bibliography{references}

@book{montgomeryvaughan,
title={Multiplicative Number Theory I. Classical Theory},
author={Montgomery, H.~L. and Vaughan, R.~C.},
isbn={9781107405820},
year={2007},
publisher={Cambridge University Press}
}

@book{watson42,
title = {A Treatise on the Theory of Bessel Functions},
author = {Watson, G.~N.},
year = {1942},
edition = {Second},
publisher = {Cambridge University Press},
}

@misc{paper1,
      title={The Variance of Sums of {H}ecke Eigenvalues {I}}, 
      author={Carmichael, N.},
      year={2025},
      eprint={2502.03437},
      archivePrefix={arXiv},
      primaryClass={math.NT},
      howpublished="arXiv preprint arXiv:2502.03437",
      url={\emph{\url{https://arxiv.org/abs/2502.03437}}},
}

@article {MR0597120,
    AUTHOR = {Rankin, R. A.},
     TITLE = {The vanishing of {P}oincar\'{e} series},
   JOURNAL = {Proc. Edinburgh Math. Soc. (2)},
  FJOURNAL = {Proceedings of the Edinburgh Mathematical Society. Series II},
    VOLUME = {23},
      YEAR = {1980},
    NUMBER = {2},
     PAGES = {151--161},
      ISSN = {0013-0915,1464-3839},
   MRCLASS = {10D15},
  MRNUMBER = {597120},
MRREVIEWER = {Roy\ Fuller},
       DOI = {10.1017/S0013091500003035},
       URL = {https://doi.org/10.1017/S0013091500003035},
}

@article {MR0982000,
    AUTHOR = {Mozzochi, C. J.},
     TITLE = {On the non-vanishing of {P}oincar\'{e} series},
   JOURNAL = {Proc. Edinburgh Math. Soc. (2)},
  FJOURNAL = {Proceedings of the Edinburgh Mathematical Society. Series II},
    VOLUME = {32},
      YEAR = {1989},
    NUMBER = {1},
     PAGES = {131--137},
      ISSN = {0013-0915,1464-3839},
   MRCLASS = {11F11},
  MRNUMBER = {982000},
MRREVIEWER = {G\"{u}nter\ K\"{o}hler},
       DOI = {10.1017/S0013091500006982},
       URL = {https://doi.org/10.1017/S0013091500006982},
}

@article {MR0597127,
    AUTHOR = {Lehner, J.},
     TITLE = {On the non-vanishing of {P}oincar\'{e} series},
   JOURNAL = {Proc. Edinburgh Math. Soc. (2)},
  FJOURNAL = {Proceedings of the Edinburgh Mathematical Society. Series II},
    VOLUME = {23},
      YEAR = {1980},
    NUMBER = {2},
     PAGES = {225--228},
      ISSN = {0013-0915,1464-3839},
   MRCLASS = {10D15},
  MRNUMBER = {597127},
MRREVIEWER = {Roy\ Fuller},
       DOI = {10.1017/S0013091500003102},
       URL = {https://doi.org/10.1017/S0013091500003102},
}

@article {MR1554584,
    AUTHOR = {Poincar\'e, H.},
     TITLE = {M\'{e}moire sur les fonctions fuchsiennes},
   JOURNAL = {Acta Math.},
  FJOURNAL = {Acta Mathematica},
    VOLUME = {1},
      YEAR = {1882},
    NUMBER = {1},
     PAGES = {193--294},
      ISSN = {0001-5962,1871-2509},
   MRCLASS = {99-04},
  MRNUMBER = {1554584},
       DOI = {10.1007/BF02391845},
       URL = {https://doi.org/10.1007/BF02391845},
}

@book {MR1474964,
    AUTHOR = {Iwaniec, H.},
     TITLE = {Topics in Classical Automorphic Forms},
    SERIES = {Graduate Studies in Mathematics},
    VOLUME = {17},
 PUBLISHER = {American Mathematical Society},
      YEAR = {1997},
     PAGES = {xii+259},
      ISBN = {0-8218-0777-3},
   MRCLASS = {11Fxx (11-02)},
  MRNUMBER = {1474964},
MRREVIEWER = {B.\ Ramakrishnan},
       DOI = {10.1090/gsm/017},
       URL = {https://doi.org/10.1090/gsm/017},
}

@article {MR0021027,
    AUTHOR = {Lehmer, D. H.},
     TITLE = {The vanishing of {R}amanujan's function {$\tau(n)$}},
   JOURNAL = {Duke Math. J.},
  FJOURNAL = {Duke Mathematical Journal},
    VOLUME = {14},
      YEAR = {1947},
     PAGES = {429--433},
      ISSN = {0012-7094,1547-7398},
   MRCLASS = {10.0X},
  MRNUMBER = {21027},
MRREVIEWER = {R.\ A.\ Rankin},
       URL = {http://projecteuclid.org/euclid.dmj/1077474140},
}

@article{hrj:15573,
    title      = {Vanishing of {P}oincar\'e series for congruence subgroups},
    author     = {Kimmel, N.},
    url        = {https://hrj.episciences.org/15573},
    doi        = {10.46298/hrj.2025.15573},
    journal    = {Hardy-Ramanujan Journal},
    issn       = {2804-7370},
    volume     = {Volume 47 - 2024},
    eid        = 3,
    year       = {2025},
    keywords   = {{P}oincar\'e series, Kloosterman sums, 11F11, 11F30, [MATH]Mathematics [math]},
    language   = {English},
}

@article {MR2997486,
    AUTHOR = {Das, S. and Ganguly, S.},
     TITLE = {Nonvanishing of {P}oincar\'e{} series on average},
   JOURNAL = {Int. J. Number Theory},
  FJOURNAL = {International Journal of Number Theory},
    VOLUME = {9},
      YEAR = {2013},
    NUMBER = {1},
     PAGES = {1--8},
      ISSN = {1793-0421,1793-7310},
   MRCLASS = {11F11 (11F30)},
  MRNUMBER = {2997486},
MRREVIEWER = {Bar\i\c s\ Kendirli},
       DOI = {10.1142/S1793042112501205},
       URL = {https://doi.org/10.1142/S1793042112501205},
}

@article{MR3099744,
author = {Li, C. and Knightly, A.},
year = {2012},
title = {{K}uznetsov's trace formula and the {H}ecke eigenvalues of {M}aass forms},
volume = {224},
journal = {Memoirs of the American Mathematical Society},
doi = {10.1090/S0065-9266-2012-00673-3}
}
\end{document}